\newtheorem{thm}{Theorem}[section]
\newtheorem{cor}[thm]{Corollary}
\newtheorem{lem}[thm]{Lemma}
\newtheorem{prop}[thm]{Proposition}
\theoremstyle{definition}
\newtheorem{defn}[thm]{Definition}
\theoremstyle{remark}
\newtheorem{rem}[thm]{Remark}
\numberwithin{equation}{section}
\soulregister{\ha}{0}
\definecolor{mycolor}{RGB}{255,251,204}
\tikzstyle{mybox} = [draw=yellow, very thick,
\tikzstyle{fancytitle} =[fill=white, text=red]
\newcommand{\dps}{\displaystyle}
\newcommand{\ep}{\varepsilon} 
\newcommand{\R}{\mathbb{R}}	  
\newcommand{\D}{\mathcal{D}}
\newcommand{\n}[2]{\|#1\|_{{#2}}} 
\newcommand{\dd}{\,dx\,dt} 
\newcommand{\into}{\int_0^1}	
\newcommand{\intq}{\int_0^T\int_0^1}	
\newcommand{\intw}{\int_0^T\int_{1-\ep}^1}	
\newcommand{\domw}{Q_{\ep}}
\newcommand{\ac}{AC}
\newcommand{\acloc}{AC_{\mathrm{loc}}}
\newcommand{\ha}{H^1_\alpha}
\newcommand{\haa}{H^2_\alpha}
\newcommand{\had}{H^{-1}_\alpha}
\newcommand{\habd}{H^{-2}_\alpha}
\newcommand{\nhd}[1]{\n{#1}{H^{-1}_\alpha}}
\newcommand{\nl}[1]{\n{#1}{L^2(0,1)}}
\newcommand{\nh}[1]{\n{#1}{H^1_\alpha}}
\newcommand{\dpa}[2]{\left\langle #1,#2\right\rangle_{_{\mkern-5mu \had, \ha}}}
\newcommand{\dpaa}[2]{\left\langle #1,#2\right\rangle_{_{\mkern-5mu\habd, \haa}}}
\newcommand{\pl}[2]{\left(#1,#2\right)_{L^2(0,1)}}
\newcommand{\iphad}[2]{\left( #1,#2\right)_{_{\mkern-5mu \had}}}
\newcommand{\A}{A}
\newcommand{\phin}{\phi_{n}}
\newcommand{\lambdan}{\lambda_{n}}
\newcommand{\jnn}[1]{j_{\nu,#1}}
\newcommand{\jn}{\jnn{n}}
\newcommand*\patchAmsMathEnvironmentForLineno[1]{%
	\expandafter\let\csname old#1\expandafter\endcsname\csname #1\endcsname
	\expandafter\let\csname oldend#1\expandafter\endcsname\csname end#1\endcsname
	\renewenvironment{#1}%
	{\linenomath\csname old#1\endcsname}%
	{\csname oldend#1\endcsname\endlinenomath}}%
\newcommand*\patchBothAmsMathEnvironmentsForLineno[1]{%
	\patchAmsMathEnvironmentForLineno{#1}%
	\patchAmsMathEnvironmentForLineno{#1*}}%
\title[Boundary null controllability]{
Boundary null controllability of degenerate wave equation as the limit of internal controllability
 }
\author[Ara\'ujo]{Bruno S. V. Ara\'ujo}
\address[B. S\'ergio Ara\'ujo]{\newline 
Unidade Acadêmica de Matemática, Universidade Federal de Campina Grande, Campina Grande, PB, Brazil
}
\email{bsergio@mat.ufcg.edu.br}
\author[R. Demarque]{Reginaldo Demarque}
\address[R. Demarque]{\newline Departamento de Ciências da Natureza,
	Universidade Federal Fluminense,
	Rio das Ostras, RJ, 28895-532, Brazil }
\email{reginaldodr@id.uff.br}
\author[L. Viana]{Luiz Viana}
\address[L. Viana]{\newline Departamento de Análise,
	Universidade Federal Fluminense,
	Niter\'{o}i, RJ, Brazil }
\email{luizviana@id.uff.br}
\subjclass[2020]{Primary: 93Bxx, 35L80. Secondary:  35L05, 35B40}
\keywords{Wave equation, Degenerate hyperbolic equations, Controllability, Observability, Singular perturbations, Asymptotic behavior of solutions.}
\begin{document}

\begin{abstract}
This work is concerned with the possibility of proving the boundary null controllability for the degenerate wave equation, developing the asymptotic analysis of a suitable family of {state-control pairs} $((u_\ep , v_\ep))_{\ep >0}$, solving related internal null controllability problems. The passage to the limit argument will be rigorously performed through the obtainment of a refined observability type inequality, with a constant explicitly given in terms of $\ep >0$. This represents an essential point, since will allow us to achieve our required weak convergence results. 
\end{abstract}
\maketitle

\onehalfspacing
\section{Introduction}\label{intro}

Given $T>0$ and $\alpha \in (0,2)$, consider $ Q:=(0,T)\times (0,1)$
and suppose that $\omega\subset(0,1)$ is an open subset. We denote the characteristic function of $\omega$ by $\chi_\omega$. This work involves the distributed null controllability of 
\begin{equation}\label{dcp0}
\begin{cases}
y_{tt}- \displaystyle \left(x^\alpha y_{ x} \right)_x=g \chi_{\omega}, & (t,x)\in Q,\\
y(t,1)=0, & t\in (0,T),\\
\begin{cases}
y(t,0)=0, &\text{if } \alpha \in (0,1),\\
\text{or} & \\
\lim\limits_{x\to 0^+}(x^\alpha y_{x})(t,x)=0,&  \text{if } \alpha 
  \in [1,2),
\end{cases} &  t\in (0,T), \\
y(0,x)=y^0(x), y_{t} (0,x)=y^1(x),\ & x\in  (0,1),
\end{cases}
\end{equation}
and the boundary null controllability of 
\begin{equation}\label{bcp0}
\begin{cases}
z_{tt}- \displaystyle \left(x^\alpha z_x \right)_x=0, & (t,x)\in Q,\\
z(t,1)=h(t),& \text{ in } (0,T),\\
\begin{cases}
z(t,0)=0, & \text{if } \alpha \in (0,1),\\
\text{or} & \\
\lim\limits_{x\to 0^+}(x^\alpha z_x)(t,x)=0,& \text{if } \alpha \in [1,2),
\end{cases},&  t\in (0,T), \\
z(0,x)=z^0(x), z_t(0,x)=z^1(x), & x\in  (0,1),
\end{cases}
\end{equation}
in a sense that will be made precise below. The region $\omega$ is called the \textit{control domain}. 

Later, the initial data $(y^0,y^1)$ and $(z^0,z^1)$, as well as the controls $g$ and $h$, will be taken in some suitable functional spaces. We emphasize that the choice of these spaces is decisive to obtain the results we are dealing with. Since these spaces will only be presented in the next section, here we introduce the concept of null controllability without mentioning them.

We say that \eqref{dcp0} is \textit{null controllable} at time $T>0$ if, for any pair of initial data $(y^0,y^1)$, in a suitable Banach space, there exists a control \(g\in L^2(Q)\), acting on $\omega$, such that the solution $y$ of \eqref{dcp0} satisfies \begin{equation}\label{null}y(T,x)=y_t(T,x)=0, \ \ \forall x\in(0,1).    
\end{equation}
 
Likewise, \eqref{bcp0} is \textit{null controllable} at time $T>0$ if, for any pair of initial data $(z^0, z^1)$, in a suitable Banach space, there exists a control \(h\in L^2(0,T)\), acting on the boundary point $x=1$, such that the solution $z$ of \eqref{bcp0} satisfies
\begin{equation}\label{null2}
z(T,x)=z_t(T,x)=0 \ \ \forall x\in(0,1).    
\end{equation}
 
The null controllability of degenerate equations has attracted the attention of several mathematicians in the last two decades. In this period, the degenerate parabolic case has been the target of most publications (see \cite{cannarsa2006null,cannarsa2006regional,cannarsa2016global}, for instance). More recently, the null controllability of degenerate hyperbolic systems \eqref{dcp0} and \eqref{bcp0} has also been studied (see \cite{Zhang2018InteriorCO} and \cite{cannarsa2015wavecontrol}, respectively). 

In this paper, for a fixed initial data $(u^0, u^1)$, we will consider, for each $\varepsilon \in (0,1)$, the control domain 
\begin{equation}\label{domain}
\displaystyle \omega_{\varepsilon} :=(1-\varepsilon ,1) \subset (0,1).
\end{equation}
We intend to obtain a family of distributed state-control pairs $((u_{\varepsilon},v_{\varepsilon}))_{\varepsilon>0}$ solving \eqref{dcp0}, that is, 
\begin{equation}
	\begin{cases}
	u_{\varepsilon tt}- \displaystyle \left(x^\alpha u_{\varepsilon x} \right)_x=v_\ep \chi_{\omega_{\varepsilon}}, & (t,x)\in Q,\\
	u_{\varepsilon} (t,1)=0,& \text{ in } (0,T),\\
	\begin{cases}
	u_{\varepsilon} (t,0)=0, &{\text{if } \alpha \in (0,1),}\\
	\text{or}& \\
	\lim\limits_{x\to 0^+}(x^\alpha u_{\varepsilon x})(t,x)=0,&  \text{if } \alpha \in [1,2),
	\end{cases} &  t\in (0,T), \\
	u_{\varepsilon} (0,x)=u^0(x), u_{\varepsilon t} (0,x)=u^1(x),\ & x\in  (0,1),
\end{cases}
	\label{dcp}
	\end{equation}
and
 \begin{equation}\label{condTe}
      u_{\varepsilon} (T,x)=u_{\ep t}(T,x)=0,\  \forall x\in  (0,1),
 \end{equation}
with the following additional property: we can prove that $((u_{\ep},v_{\ep}))_{\ep >0}$ converges to $(u,h)$ in a suitable functional space, as $\ep \to 0$, with $(u,h)$ satisfying
  
\begin{equation}
	\begin{cases}
	u_{tt}- \displaystyle \left(x^\alpha u_x \right)_x=0, & (t,x)\in Q,\\
	u(t,1)=h(t),& \text{ in } (0,T),\\
	\begin{cases}
	u(t,0)=0, &\textcolor{black} {\text{if } \alpha \in (0,1),}\\
	\textcolor{black}{\text{or}}\\
	\textcolor{black}{\lim\limits_{x\to 0+ }(x^\alpha u_x)(t,x)=0,}& \textcolor{black}{\text{if } \alpha \in [1,2),}
	\end{cases},&  t\in (0,T), \\
	u(0,x)=u^0(x), u_t(0,x)=u^1(x), & x\in  (0,1)
	\end{cases}
	\label{bcp}
	\end{equation}	
and
 \begin{equation}\label{condT}
      u(T,x)=u_{t}(T,x)=0,\  \forall x\in  (0,1).
 \end{equation}

Roughly speaking, the desired family $((u_{\ep},v_{\ep}))_{\ep >0}$  will be obtained using the well-known  \textit{Hilbert Uniqueness Method} (HUM), where $v_\ep$ is a solution, in the sense of transposition, given by Lions-Magenes (see Definition \ref{trans}), of the homogeneous adjoint problem associated to \eqref{dcp}. Using rescaling $v_\ep=\frac{1}{\ep^3}\varphi_\ep$, we will be able to prove that $\varphi_\ep \stackrel{\ast}{\rightharpoonup} \varphi$ in $L^\infty(0,T;L^2(0,1)$. Consequently, we will achieve the desired convergence of $((u_{\ep},v_{\ep}))_{\ep >0}$ to $(u,h)$, where $v_\ep=\frac{1}{\ep^3}\varphi_\ep$
and $h=-\frac{1}{3}\varphi_x(t,1)$. The precise statement of this result is given in Theorem \ref{th4.2}.

It is worth mentioning that the boundary controllability of \eqref{bcp} has been studied in \cite{zhang2017null} for both the {\bf weakly degenerate} operator, when $\alpha\in(0,1)$ and  the {\bf strongly degenerate} operator, when $\alpha \in [1,2)$. On the other hand, for $\alpha \in (0,1)$, the distributed controllability of \eqref{dcp} has been established in \cite{Zhang2018InteriorCO}, for an arbitrary control domain $\omega\subset\subset(0,1)$. We emphasize that one of the contributions of this work is about the obtainment, for any $\alpha \in (0,2)$, of the null controllability of \eqref{dcp}, when the control domain is $\omega = \omega_{\varepsilon}$.

At this moment, let us briefly talk about some previous works which motivate the current one. In 1988, Zuazua used Lion's Hilbert uniqueness method in order to obtain the internal exact controllability for the wave equation, when the distributed control acts on an appropriate $\varepsilon$-neighborhood of some portion $\Gamma _{0}$ of the boundary (see \cite[Chapitre VII, section 2.3]{lions1988controlabilite} or \cite[Section III.2, Teorema 1]{zuazua1990controlabilidad}) 
  { or \hbox{\cite[Theorem 3.1]{zuazua2024exactcontrolwave}}}). Based on it, in \cite{fabre1992exact}, Fabre proved the exact boundary controllability of the wave equation as the limit of internal controllability, which means that, in the passage to the limit, when $\varepsilon \to 0$, the $\varepsilon$-neighborhood of $\Gamma _{0}$ shrinks to itself. In \cite{chaves2020boundary}, Chaves-Silva et al.~obtained a similar result for the heat equation. Recently, in \cite{araujo2022boundary}, we developed an analogous investigation for the degenerate heat equation case.

In this current work, we are focused on the corresponding investigation involving the degenerate wave equation case, that is, the boundary null controllability of the degenerate wave equation will be recovered as the limit of internal controllability. We are supposed to say that this goal has already been started in \cite{araujo2023regularity}, where we have obtained some crucial regularity results for the degenerate wave equations, in a neighborhood of the boundary.

It is important to note that our choice of the control domain \eqref{domain} is inspired by the aforementioned works of Zuazua and Fabre, where the control domain is an $\varepsilon$-neighborhood of a portion of the boundary. In our case, the boundary consists of exactly two points, $x = 0$ and $x = 1$. Thus, the general framework requires control domains $\omega_\varepsilon$ shrinking to either $x = 0$ or $x = 1$. 
However, the case where the domain shrinks to $x = 0$ is more delicate, because this is the point where the problem degenerates. This case remains open, with the main difficulty lying in obtaining similar regularity results to those proved in \cite{araujo2023regularity}, which rely on trace arguments for $x = 1$. 

As we mentioned earlier, the family $((u_\ep,v_\ep))_{\ep>0}$ that we seek is determined by HUM. The key of this approach pass by proving an \textit{observability inequality}
\begin{equation}\label{obs-ineq0}
         \n{v^0}{L^2(0,1)}^2+\n{v^1}{H_\alpha^{-1}}^2\leq C_{T,\alpha,\omega} \int_0^T\int_{\omega} |v|^2\dd,
\end{equation} 
satisfied for any solution of the \textit{adjoint problem} 
\begin{equation}\label{2.3}
	\begin{cases}
	v_{tt}- \displaystyle \left(x^\alpha v_{x} \right)_x=0, & (t,x)\in Q,\\
v(t,1)=0,& \text{ in } (0,T),\\
	\begin{cases}
	v(t,0)=0, &\textcolor{black}{\text{if } \alpha \in (0,1),}\\
	\textcolor{black}{\text{or}}& \\
	\textcolor{black}{\lim\limits_{x\to 0+ }(x^\alpha v_{x})(t,x)=0},&  \textcolor{black}{\text{if } \alpha \in [1,2),}
	\end{cases} &  t\in (0,T), \\
	v(0,x)=v^0(x),\ v_{t} (0,x)=v^1(x),\ & x\in  (0,1).
	\end{cases}
\end{equation}
The norm $\|\cdot\|_{H_\alpha^{-1}}$, mentioned in \eqref{obs-ineq0}, will be properly defined in the  next section.

 {The observability inequality {\eqref{obs-ineq0}} was proved 
in {\cite{Zhang2018InteriorCO}}, but only for the weakly degenerate case, where $\alpha\in(0,1)$. In our case, when $\omega=(1-\varepsilon,1)$, that is, the control domain depends on $\varepsilon$, that inequality takes the following form:}
\begin{equation}\label{obs-ineq1}
    \n{v^0}{L^2(0,1)}^2 + \n{v^1}{H_\alpha^{-1}} \leq C_{T,\alpha,\varepsilon} \int_0^T\int_{1-\varepsilon}^1 |v|^2 \dd.
\end{equation}
     {Although we can use it to obtain the null controllability of {\eqref{dcp}}, it is not sufficient to pass to the limit as $\varepsilon\to 0$, since:}
\begin{itemize}
    \item  {the dependence of $C_{T,\alpha,\varepsilon}$ on $\varepsilon$ is not explicitly given;}
    \item  {the minimal controllability time obtained by them depends on the domain, hence it also depends on $\varepsilon$.}
\end{itemize}
 {Besides that, it does not cover the strongly degenerate case.
We overcome these issues by proving a new observability inequality 
(Theorem {\ref{th-F-3.4}}) such that:}
\begin{itemize}
    \item  {$C_{T,\alpha,\varepsilon}$ is a constant of order $\varepsilon^{-3}$;}
    \item  {the minimal controllability time depends only on $\alpha$;}
    \item  {it holds for any $\alpha\in (0,2)$,}
\end{itemize}
 {provided the control domain touches the boundary $x=1$.}

 
  {
In order to justify  such dependence of $\varepsilon$ in the constant 
$C_{T,\alpha,\varepsilon}$, a result concerning the regularity of solutions by 
transposition is a crucial step. For the classical wave equation, as pointed out in 
Remark 3.1 of \mbox{\cite{fabre1992exact,fabre1993behavior}} and Remark 2.4 of  
\mbox{\cite{lions1988controlabilite}},  it is well known that if a solution by 
transposition has boundary normal derivative in {\(L^{2}(\Omega)\)}, then the 
corresponding initial data necessarily belong to the natural energy space, so that 
the solution is in fact weak. For degenerate wave equations, however, this property 
does not seem to be available in the literature. In Appendix \mbox{\ref{app-reg}}, 
we establish an analogous result by using a spectral approach based on 
Fourier–Bessel expansions combined with Ingham inequality. This method yields the 
desired regularity property, but introduces a new minimal controllability time 
{\(\tilde{T}_{\alpha}\)} when $\alpha \in (4/3,2)$. At this stage, it is not clear 
whether this additional restriction reflects an intrinsic feature of the degenerate 
wave equation or it is merely a limitation of the technique employed here.
}

The precise statements of our main results will be better understood after the presentation of some basics concepts. This work has the following structure: Section \ref{sec-pre} reunites definitions, results and general preliminaries, all related to the controllability problems involving the degenerate wave equation. Among them, the second section recalls us some recent regularity results in a neighborhood of the boundary, proved in \cite{araujo2023regularity};
Section \ref{sec-obs} is devoted to the homogeneous adjoint system \eqref{2.3}, for which some observability type inequalities will be gradually obtained.
The third section not only recollects some previously known results, but also provides some new ones originally developed here; 
Section \ref{main} only brings the statements of our main results 
{(Theorems} \ref{th-F-3.4}, \ref{th2.1-fabre} and \ref{th4.2}), 
considering that the notations and assumptions are all given in the previous 
sections.   {In a word,}   {Theorem} \ref{th-F-3.4}   {brings an 
observability inequality, with a positive constant} $C=C_{T,\alpha ,\ep}$ 
  {explicitly given in terms of} $\ep$,   {which is crucial to prove 
Theorem} \ref{th2.1-fabre},   {related to the exact internal 
controllability of the degenerate wave equation with the control domain 
being} $\omega_{\ep} = (1-\ep ,1)$.   {As a consequence of them, we can prove Theorem} \ref{th4.2},   {describing the boundary null controllability of the degenerate wave equation as the limit of internal controllability}; Section \ref{sec-null} is concerned with the proof of  Theorems \ref{th2.1-fabre} and \ref{th4.2}. In fact, the fifth section is an almost complete sketch of their proof, observing that there are some convergences which will just be rigorously justified in Section \ref{sec-conv}, where the whole passage to the limit strategy will be satisfactorily concluded; Finally, Section \ref{sec-prova1} is about the proof of Theorem \ref{th-F-3.4}, providing a crucial observability inequality, valid for each \textcolor{black}{$\alpha \in (0,2)$}, with an explicit constant described in terms of the parameter $\ep \in (0,1)$. Appendix \ref{app-well-pos} brings the proof of some well-podesness results, while Appendix \ref{app-trans} is about some properties of solutions by transpositions of degenerate wave equations.
   {We conclude this paper in Appendix \mbox{\ref{app-reg}}, presenting an additional regularity for solutions by transposition with normal derivative in \mbox{\(L^2(0,1)\)}. This final point is based on \mbox{\cite{gueye2014exact,HSSuTe2025}} and the references therein, relying on the approach given by the Fourier-Bessel series theory.}

\section{Preliminaries}\label{sec-pre}

The goal of this section is to establish the basic concepts concerning the well-posedness of a system like \eqref{dcp0}, as well as some previous regularity results that will have a key role in our study. Firstly, let us present some basic notation:

\begin{itemize}
    \item $\D(\Omega)$ is the space of test functions in $C^\infty(\Omega)$ with compact support in $\Omega$, see \cite{kesavan2015topics, bhattacharyya2012distributions}.
    \item $\D'(\Omega)$ is the space of distributions on $\Omega$, see \cite{kesavan2015topics, bhattacharyya2012distributions}.
    \item $\D(0,T;V)$ is the space of vector functions $v:(0,T)\longrightarrow V$ in $C^\infty(\Omega;V)$, compactly supported in $(0,T)$.
    \item $\D'(0,T;V)$ is the space of vector distributions $\varphi: \D(0,T)\longrightarrow V$, with the usual notion of convergence, see \cite[Chapter 9]{bhattacharyya2012distributions}.
    \item $\ac(\Omega)$ is the space of absolutely continuous functions on $\Omega$.
    \item $\acloc(\Omega)$ is the space of locally absolutely continuous functions on $\Omega$.
    \item $\langle \cdot,\cdot  \rangle_{V',V}$ denotes the standard duality pairing between the vector space $V$ and its dual $V'$.
    \item $(\cdot,\cdot)_H$ denotes the inner product defined in a Hilbert space $H$.
\end{itemize}



\subsection{Functional Spaces}

\begin{defn} [Weighted Sobolev spaces]
Consider $\alpha \in (0,1)$, for the \textbf{weakly degenerate case} (WDC), or $\alpha \in [1,2)$, for the \textbf{strongly degenerate case} (SDC).
\begin{itemize}
	\item[(I)] For the (WDC), we set 
\begin{equation*}
H_{\alpha}^1:= \{  u\in L^2(0,1);\ u \in \ac([0,1]),\
	x^{\alpha /2}u_x\in L^2(0,1) \mbox{ and } u(1)=u(0)=0\},
\end{equation*}
	equipped with the natural norm
\[ \|u\|_{H_{\alpha}^1}:=\left( \|u\|_{L^2(0,1)}^2+\| x^{\alpha /2} u_x\|_{L^2(0,1)}^2 \right) ^{1/2},\]
\item[(II)] For the (SDC),
\begin{equation*}
H_{\alpha}^1:= \{  u\in L^2(0,1);\ u\in \acloc((0,1]),\ x^{\alpha /2} u_x\in L^2(0,1) \mbox{ and } u(1)=0\},
\end{equation*}
and the norm keeps the same;

\item[(III)] In both situations, the (WDC) and the (SDC), 
and
\[
H_{\alpha}^2:= \{  u\in H_{\alpha}^1;\   {x^{\alpha} u_x}\in H^1(0,1) \}
\]
with the norm
$\|u\|_{H_{\alpha}^2}:=\left( \|u\|_{H_{\alpha}^1}^2+\|(  {x^{\alpha} u_x})_x\|_{L^2(0,1)}^2 \right) ^{1/2}$. 
\end{itemize}
\end{defn}	

{For properties of these spaces, we refer to {\cite{cannarsa2015wavecontrol}}. In particular, they proved in Proposition 2.5 (III)  that, for $\alpha\in [1,2)$, if $u\in \haa$, then}
\[  {\lim\limits_{x\to0^+}x^\alpha u_x(x)=0}.\]
{This is essential to guarantee that a classical solution satisfies the Neumann condition in the strongly degenerate case.}


Another important space in this context is $H_\alpha^{-1}=(H_\alpha^1)'$, the dual space of $H_\alpha^1$.

For each $u\in H_\alpha^1$, we define $-(x^\alpha u_x)_x\in H_\alpha^{-1}$ by 
\begin{equation}\label{def-iso}
\dpa{-(x^\alpha u_x)_x}{v}:=\int_0^1x^\alpha u_xv_x\,dx,\ \forall v\in H_\alpha^1.
\end{equation}
This establishes the bounded linear operator  $u\in \ha \longmapsto -(x^\alpha u_x)_x\in \had$. Actually, Lax-Milgram theorem yields that it is indeed an isomorphism. To see that, let us recall the following Poincar\'e inequality, proved in \cite[Proposition 2.2]{cannarsa2015wavecontrol}:
\begin{equation*}
\n{u}{\ha} \leq C \n{x^{\alpha/2}  {u_x}}{L^2(0,1)}, \forall u\in H^1_\alpha.  \end{equation*}
As a consequence, the norm $|u|_{1,\alpha}:=\n{x^{\alpha/2}u_x}{L^2(0,1)}$ is equivalent to $\n{u}{\ha}$. This provides a continuous,  coercive and symmetric bilinear form $a:\ha\times \ha \longrightarrow \R$, given by
\[a(u,v)=\into x^\alpha u_xv_x\, dx. \]
Therefore, given $\xi \in \had$, from Lax-Milgram theorem, there exists a unique $\hat{\xi} \in H^1_\alpha$ such that 
\begin{equation}\label{dual-prod}
\int_0^1x^\alpha \hat{\xi}_x w_x\,dx=\dpa{\xi}{w},\ \forall w\in H_\alpha^1.
\end{equation}
In other words, $u=\hat{\xi}\in \ha$ is the unique solution to the equation
\begin{equation}\label{ellip-eq}
-(x^\alpha u_x)_x=\xi \text{ in } \had.
\end{equation}
This gives us the aforementioned isomorphism. Let us denote its inverse by
\begin{equation}
\widehat{\ }: \xi\in \had \longmapsto \hat{\xi} \in \ha    
\end{equation}
Then, we can define the following inner product on $\had$:
\begin{equation}\label{inp-dual}
    \iphad{\xi}{\zeta}:=\into x^\alpha \hat{\xi}_x\hat{\zeta}_x\,dx,\ \forall \xi,\zeta \in \had.
\end{equation}

Note that the norm induced by the inner product \eqref{inp-dual} is exactly the usual one considered on $\had$.

   {
 Recall that any  $v\in L^2(0,1)\hookrightarrow\had$, using the identification $v\equiv\pl{v}{\cdot} \in \had$.
 }
   {
Thus, 
 combining \eqref{dual-prod} and \eqref{def-iso} we can see that} 
\[
 {\pl{v}{w}=\langle v,w\rangle_{H_\alpha^{-1},H_\alpha^1}=\into x^\alpha \hat{v}_x w_x\,dx=\langle-(x^\alpha \hat{v}_x)_x,w\rangle_{H_\alpha^{-1},H_\alpha^1},\ \forall w\in \ha.}
\]


We denote by \(\habd\), the dual of \(\haa\). For each \(u\in L^2(0,1)\), we define \((x^\alpha u_x)_x\in \habd\)  by
\begin{equation*}
    \dpaa{(x^\alpha u_x)_x}{w}=\into u (x^\alpha w_x)_x\,dx.
\end{equation*}
It is well defined, since
\begin{equation*}
\begin{split}
\left|\dpaa{(x^\alpha u_x)_x}{w}\right| & \leq \into |u| |(x^\alpha w_x)_x|\,dx\\
& \leq \nl{u}\nl{(x^\alpha w_x)_x}\\
& \leq \nl{u}\n{w}{\haa}, \ \forall w\in \haa.
\end{split}
\end{equation*}
In other words, we have the bounded linear operator 
\[ u\in L^2(0,1) \longmapsto  (x^\alpha u_x)_x\in \habd.\]

The following result is an important tool to study degenerate equations. Its proof can be found in \cite{cannarsa2008carleman,cannarsa2006null}. 

\begin{prop}[Hardy-Poincar\'e Inequality]\label{hardy}
    Assume that $\alpha\in (0,1)$. For any $u\in H_\alpha^1$, we have \[\int_0^1x^{\alpha-2}|u|^2\,dx\leq \frac{4}{(1-\alpha)^2}\int_0^1x^\alpha|u_x|^2\,dx.\]
\end{prop}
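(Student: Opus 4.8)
The plan is to derive the inequality from a single weighted integration by parts, the whole difficulty being concentrated in the treatment of the boundary term at the degeneracy point $x=0$; this is also where the distinction between the weakly and strongly degenerate cases enters, and where one sees why $\alpha=1$ must be excluded (the primitive $x^{\alpha-1}/(\alpha-1)$ used below blows up there).

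First I would fix $u\in H_\alpha^1$ and set $A:=\int_0^1 x^{\alpha-2}|u|^2\,dx\in[0,+\infty]$ and $B:=\int_0^1 x^\alpha|u_x|^2\,dx<+\infty$. For $\delta\in(0,1)$ the function $u$ is absolutely continuous on $[\delta,1]$ (on $[0,1]$ in the WDC, on $(0,1]\supset[\delta,1]$ in the SDC), so using $x^{\alpha-2}=\tfrac{1}{\alpha-1}\tfrac{d}{dx}x^{\alpha-1}$ together with $u(1)=0$, integration by parts on $[\delta,1]$ gives
\[
\int_\delta^1 x^{\alpha-2}|u|^2\,dx=-\frac{\delta^{\alpha-1}|u(\delta)|^2}{\alpha-1}-\frac{2}{\alpha-1}\int_\delta^1 x^{\alpha-1}u\,u_x\,dx .
\]
Abbreviating $A_\delta:=\int_\delta^1 x^{\alpha-2}|u|^2\,dx$, note $A_\delta\le\delta^{\alpha-2}\|u\|_{L^2(0,1)}^2<\infty$ since $\alpha-2<0$; writing $x^{\alpha-1}=x^{(\alpha-2)/2}x^{\alpha/2}$ and applying Cauchy–Schwarz yields $\bigl|\int_\delta^1 x^{\alpha-1}u\,u_x\,dx\bigr|\le A_\delta^{1/2}B^{1/2}$.

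In the strongly degenerate case $\alpha\in(1,2)$ the boundary contribution $-\delta^{\alpha-1}|u(\delta)|^2/(\alpha-1)$ is nonpositive, so it can simply be discarded: $A_\delta\le\frac{2}{\alpha-1}A_\delta^{1/2}B^{1/2}$, hence $A_\delta\le\frac{4}{(\alpha-1)^2}B$, and letting $\delta\to0^+$ (monotone convergence on the left) gives the claim. In the weakly degenerate case $\alpha\in(0,1)$ that term is instead nonnegative and must be shown to be negligible; here one uses $u(0)=0$: $|u(\delta)|=\bigl|\int_0^\delta u_x\,dx\bigr|\le\bigl(\int_0^\delta x^{-\alpha}\,dx\bigr)^{1/2}\bigl(\int_0^\delta x^\alpha|u_x|^2\,dx\bigr)^{1/2}$, so that $\frac{\delta^{\alpha-1}|u(\delta)|^2}{1-\alpha}\le\frac{1}{(1-\alpha)^2}r_\delta$ with $r_\delta:=\int_0^\delta x^\alpha|u_x|^2\,dx\to0$. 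Inserting this into the identity gives $A_\delta\le\frac{r_\delta}{(1-\alpha)^2}+\frac{2}{1-\alpha}A_\delta^{1/2}B^{1/2}$; solving this quadratic inequality for $A_\delta^{1/2}$ and letting $\delta\to0^+$ (so $r_\delta\to0$ and $A_\delta\uparrow A<\infty$) again produces $A\le\frac{4}{(1-\alpha)^2}B$.

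The main obstacle, as indicated, is exactly the control of the boundary term at $x=0$: in the strongly degenerate regime it is free because of its sign, but in the weakly degenerate regime it genuinely requires combining the boundary condition $u(0)=0$ with the weighted integrability $x^{\alpha/2}u_x\in L^2(0,1)$, and one must argue on $[\delta,1]$ and pass to the limit rather than work directly on $(0,1)$, so as not to presuppose $A<\infty$. Alternatively, one could first establish the inequality for $u$ smooth and vanishing near the endpoints — where no boundary term appears — and then conclude by density in $H_\alpha^1$; the truncation argument above is essentially a self-contained version of that.
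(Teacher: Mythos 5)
Your argument is correct, and it is essentially the standard proof of this weighted Hardy inequality: the paper itself does not prove Proposition \ref{hardy} but cites \cite{cannarsa2006null,cannarsa2008carleman}, where the same integration-by-parts-plus-Cauchy--Schwarz scheme is used, with the boundary term at the degeneracy handled by sign in the strongly degenerate case and via $u(0)=0$ in the weakly degenerate case. Your truncation at $x=\delta$ to avoid presupposing $\int_0^1 x^{\alpha-2}|u|^2\,dx<\infty$ is exactly the right precaution, so there is nothing to fix.
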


\subsection{Well-posedenss results}

Let us define the notion of solutions we are dealing with. We begin with the weak solution of the elliptic degenerate problem. 

\begin{defn}\label{def-weak-elip}
Given $\xi \in \had$, we say that $u\in \ha$ is a  \textbf{weak solution} of the degenerate elliptic problem 
    \begin{equation}\label{DEP}
    \begin{cases}
        -(x^\alpha u_x)_x=\xi, & \text{ in } (0,1),\\
        u(1)=0,\\
        \begin{cases}
        u(0)=0, & \alpha \in (0,1)\\
            \lim\limits_{x\to0^+}(x^\alpha u_x(x))=0, & \alpha \in [1,2),
        \end{cases}
    \end{cases}
\end{equation}
\end{defn}
if 
\begin{equation}\label{var-form}
    \into x^\alpha u_x w_x\,dx=\dpa{\xi}{w}, \forall w\in \ha.
\end{equation}

Recall that $u=\hat{\xi}$, given by  \eqref{dual-prod}, is the unique solution of problem \eqref{DEP}.

\begin{rem} 
Note that the elliptic problem \eqref{DEP} has Dirichlet boundary conditions for the (WDC). On the other hand, for the (SDC), \eqref{DEP} has a Neumann boundary condition at $x=0$ and a Dirichlet one at $x=1$. As pointed out in \cite[Section 3.2.2]{kesavan2015topics}, Dirichlet boundary conditions, also called \textbf{essential boundary conditions}, have to be imposed a \textit{priori} in the space $\ha$, while the Neumann boundary conditions, also called \textbf{natural boundary conditions}, do not. \textcolor{black}{Despite that, as we mention before, the Neumann condition can be recovered for a strong solution $u\in H_\alpha^2$. }  
%
%
\end{rem}

Next, let us specify which kind of solution to \eqref{pb1} we will deal with. Let us consider the problem
\begin{equation}
	\begin{cases}
	y_{tt}- \displaystyle \left(x^\alpha y_{ x} \right)_x=f, & (t,x)\in Q,\\
	y(t,1)=0,& \text{ in } (0,T),\\
	\begin{cases}
	y(t,0)=0, &\text{if } \alpha \in (0,1),\\
	\text{or}& \\
	  {\lim\limits_{x\to0^+}(x^\alpha y_x )(t,x)=0},&  \text{if } \alpha \in [1,2),
	\end{cases} &  t\in (0,T), \\
	y(0,x)=y^0(x), y_{t} (0,x)=y^1(x),\ & x\in  (0,1).
	\end{cases}
	\label{pb1}
	\end{equation}
\begin{defn}\label{weak} Given  $f\in L^1(0,T;L^2(0,1))$ and $(y^0,y^1)\in H^1_\alpha\times L^2(0,1)$, we say that
\[
\displaystyle y\in C^0([0,T];H^1_\alpha)\cap C^1([0,T];L^2(0,1)) 
\]
is a \textbf{weak solution} of \eqref{pb1}
if the following properties hold:
\begin{enumerate}[(a)]
\item
\begin{equation}\label{form-int}
   \frac{d}{dt}(y_t(t),w)_{L^2(0,1)}-\iphad{(x^\alpha y_x)_x(t)}{w}=(f(t),w)_{L^2(0,1)}
\end{equation}
in the sense of $\mathcal{D}'(0,T)$, for any $w\in \ha$, where $\iphad{\cdot}{\cdot}$ is defined in \eqref{inp-dual};
\item $y(0,\cdot)=y^0$ in $\ha$ and $y_t (0,\cdot)=y^1$ in $L^2(0,1)$.
\end{enumerate}
\end{defn}

Observe that \eqref{form-int} means that
\begin{equation}\label{form-int2}
\begin{split}
& \intq \Big(-y_t(t,x) w(x) \phi'(t) +x^\alpha y_x(t,x)w_x(x)\phi(t)\Big) \dd\\
& \qquad \qquad =\intq f(t,x)w(x)\phi(t)\dd, 
\end{split}
\end{equation}
for all $\phi\in \mathcal{D}(0,T)$ and $w\in \ha$.

\textcolor{black}{Concerning the existence of weak solutions for \eqref{pb1}, we state next a well-posedness result, whose proof will be given in Appendix \ref{app-well-pos}.}

\begin{prop}\label{well-pos}
	Given  $f\in L^1(0,T;L^2(0,1))$ and $(y^0,y^1)\in H^1_\alpha\times L^2(0,1)$, there exists a unique weak solution $y\in C^0([0,T];H^1_\alpha)\cap C^1([0,T];L^2(0,1))$ of \eqref{pb1}.	In addition, there exists a positive constant $C=C(T,\alpha)$ such that
\begin{multline}\label{ineq1}
\sup_{t\in[0,T]}\left( \n{y_t(t,\cdot )}{L^2(0,1)}^2+\n{y(t,\cdot )}{H_\alpha^1}^2 \right) \\
\leq C\left(\n{f}{L^1(0,T;L^2(0,1))}^2+\n{y^0}{H_\alpha^1}^2 +\n{y^1}{L^2(0,1)}^2\right).    
\end{multline}
\end{prop}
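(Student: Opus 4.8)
The statement to prove is Proposition~\ref{well-pos}, the well-posedness of the degenerate wave equation \eqref{pb1} in the energy space, together with the energy estimate \eqref{ineq1}. Since the excerpt attributes the result to a semigroup construction in \cite{cannarsa2015wavecontrol}, my plan is to follow exactly that route: reformulate \eqref{pb1} as a first-order Cauchy problem in the Hilbert space $\mathcal{H} := H^1_\alpha \times L^2(0,1)$, show that the associated operator generates a $C_0$-semigroup (in fact a group, since the problem is conservative), and then read off existence, uniqueness, and the estimate from standard semigroup theory together with Duhamel's formula.

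**Key steps.** First I would introduce the operator $A(y,p) := (p,\ (x^\alpha y_x)_x)$ on the domain $D(A) := \{(y,p)\in \mathcal{H} : y \in H^2_\alpha,\ p \in H^1_\alpha\}$ (with the boundary conditions appropriate to the WDC/SDC dichotomy already encoded in $H^1_\alpha$ and $H^2_\alpha$), so that \eqref{pb1} becomes $U' = AU + F$, $U(0) = (y^0,y^1)$, with $U = (y,y_t)$ and $F = (0,f)$. Second, I would equip $\mathcal{H}$ with the natural energy inner product $\big((y,p),(\tilde y,\tilde p)\big)_{\mathcal H} = \int_0^1 x^\alpha y_x \tilde y_x\,dx + \int_0^1 p\tilde p\,dx$ (equivalent to the product-norm on $\mathcal H$ by the Hardy--Poincaré inequality, Proposition~\ref{hardy}, which controls $\|y\|_{L^2}$ by $\|x^{\alpha/2}y_x\|_{L^2}$ when $\alpha\neq 1$, and by a direct Poincaré-type bound when $\alpha=1$), and check that $A$ is skew-adjoint: the computation $(A(y,p),(y,p))_{\mathcal H} = \int x^\alpha p_x y_x + \int (x^\alpha y_x)_x p = 0$ follows from an integration by parts in which the degenerate boundary term $x^\alpha y_x p\big|_0^1$ vanishes — at $x=1$ because $p \in H^1_\alpha$ forces $p(1)=0$, and at $x=0$ because either $y(0)=0$ (WDC) or $x^\alpha y_x \to 0$ (SDC). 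Third, I would verify that $A$ and $-A$ are both maximal dissipative, i.e. that $\lambda I - A$ is onto $\mathcal H$ for some $\lambda > 0$ (equivalently, solving $(y,p) \mapsto (\lambda y - p, \lambda p - (x^\alpha y_x)_x) = (f_1,f_2)$, which reduces to an elliptic problem $\lambda^2 y - (x^\alpha y_x)_x = f_2 + \lambda f_1$ solvable by Lax--Milgram on $H^1_\alpha$ using coercivity of the bilinear form $\lambda^2\int y\tilde y + \int x^\alpha y_x \tilde y_x$). By Stone's theorem $A$ generates a $C_0$-group of isometries $(e^{tA})_{t\in\mathbb R}$ on $\mathcal H$.

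**Concluding and the main obstacle.** With the group in hand, Duhamel's formula $U(t) = e^{tA}U(0) + \int_0^t e^{(t-s)A}F(s)\,ds$ defines the unique mild solution for $F \in L^1(0,T;\mathcal H)$, i.e. $f \in L^1(0,T;L^2(0,1))$; taking norms and using that $e^{tA}$ is an isometry gives $\|U(t)\|_{\mathcal H} \le \|U(0)\|_{\mathcal H} + \int_0^t \|F(s)\|_{\mathcal H}\,ds$, which upon squaring and using $(a+b)^2 \le 2a^2+2b^2$ and the equivalence of norms is precisely \eqref{ineq1}. The identification of the mild solution with the weak solution of Definition~\ref{weak} is then a routine density argument (take $U(0) \in D(A)$ and $F$ smooth, integrate the strong equation against a test function $\varphi$, integrate by parts in $t$, and pass to the limit). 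The main obstacle I anticipate is purely at the level of the degenerate function spaces: one must be careful that the traces invoked in the integration by parts actually make sense — in the SDC the function $y$ is only \emph{locally} absolutely continuous on $(0,1]$, so the vanishing of $x^\alpha y_x$ at $0$ has to be justified for $y\in H^2_\alpha$ via the characterization of that space (this is exactly the kind of boundary-behavior subtlety the authors isolate), and the equivalence of the energy norm with the product norm needs the case $\alpha = 1$ handled separately since Proposition~\ref{hardy} excludes it. None of this is deep, but it is where all the care goes; the semigroup machinery itself is standard once the spaces are set up correctly.
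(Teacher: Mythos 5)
Your proposal is correct and follows essentially the same route as the paper, which does not prove Proposition~\ref{well-pos} directly but attributes it to the semigroup construction of \cite{cannarsa2015wavecontrol} and sketches exactly this machinery (first-order reformulation, skew-adjointness via integration by parts with the degenerate boundary terms, m-accretivity via Lax--Milgram, contraction semigroup plus Duhamel) when establishing the analogous Proposition~\ref{well-pos-trans} in the larger space $L^2(0,1)\times H^{-1}_\alpha$. The trace subtleties you flag (vanishing of $x^\alpha y_x$ at $x=0$ in the SDC, the case $\alpha=1$ excluded from Proposition~\ref{hardy}) are indeed the only points requiring care, and your treatment of them is consistent with the paper's setup.
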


Associated with \eqref{pb1}, we have the following energy functional
\begin{equation*}
    E(t):=\frac{1}{2}\into \left(|y_t(t,x)|^2+x^\alpha |y_x(t,x)|^2\right) \,dx,
\end{equation*}
where $t\in (0,T)$. The previous result establishes that 
\[E(t)\leq C\left(\n{f}{L^1(0,T;L^2(0,1))}^2+E(0)\right),\]
an expected fact for hyperbolic equations. The next result is known as ``hidden regularity", another inherited property from the hyperbolic theory. Like the previous one, these results were also discussed in \cite{cannarsa2015wavecontrol,zhang2017null}.

\begin{prop}\label{prop-trace}
    For any weak solution $y$ of \eqref{pb1}, we have $y_x(\cdot,1)\in L^2(0,T)$ and 
    \begin{equation}
        \int_0^T|y_x(t,1)|^2\,dt\leq C\left(\|f\|_{L^1(0,T;L^2(0,1))}^2+E(0)\right).
    \end{equation}
\end{prop}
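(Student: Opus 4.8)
The statement to prove is Proposition~\ref{prop-trace}: hidden regularity for the degenerate wave equation, i.e. $y_x(\cdot,1)\in L^2(0,T)$ with the estimate $\int_0^T|y_x(t,1)|^2\,dt\leq C(\|f\|_{L^1(0,T;L^2(0,1))}^2+E(0))$.

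The plan is to use the classical multiplier method, as pioneered for the nondegenerate wave equation. First I would establish the estimate for smooth solutions (say with data in $H^2_\alpha\times H^1_\alpha$ and $f$ smooth), and then pass to general weak solutions by the density and the well-posedness estimate \eqref{ineq1}. For the smooth case, the key multiplier is $x\,y_x$ (the ``$x\,u'$'' multiplier adapted to the interval $(0,1)$ with the degeneracy at $x=0$). So I would multiply the equation $y_{tt}-(x^\alpha y_x)_x=f$ by $x\,y_x$ and integrate over $Q=(0,T)\times(0,1)$.

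The core computation: integrating $\int_0^T\int_0^1 y_{tt}\,x\,y_x\,dx\,dt$ by parts in $t$ produces a boundary-in-time term $\big[\int_0^1 y_t\,x\,y_x\,dx\big]_0^T$, which is controlled by $\sup_t E(t)$, hence by the right-hand side via \eqref{ineq1}; and an interior term $-\int_0^T\int_0^1 y_t\,x\,y_{xt}\,dx\,dt=-\tfrac12\int_0^T\int_0^1 x\,(\,|y_t|^2)_x\,dx\,dt=\tfrac12\int_0^T\int_0^1|y_t|^2\,dx\,dt-\tfrac12\int_0^T x|y_t|^2\big|_{x=0}^{x=1}\,dt$, where the boundary contributions vanish or are controlled ($y_t(t,1)=0$ from $y(t,1)=0$). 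Integrating the term $-\int_0^T\int_0^1(x^\alpha y_x)_x\,x\,y_x\,dx\,dt$ by parts in $x$ gives the crucial boundary term at $x=1$, namely $-\int_0^T x^\alpha y_x\cdot x y_x\big|_{x=0}^{x=1}\,dt=-\int_0^T |y_x(t,1)|^2\,dt$ plus (for $\alpha\in[1,2)$, using the degenerate boundary condition $(x^\alpha y_x)(t,0)=0$ and the Hardy--Poincar\'e inequality to kill the $x=0$ contribution), together with an interior term $\int_0^T\int_0^1 x^\alpha y_x (x y_x)_x\,dx\,dt=\int_0^T\int_0^1 x^\alpha y_x(y_x+xy_{xx})\,dx\,dt$, which after a further integration recombines into a multiple of $\int_0^T\int_0^1 x^\alpha|y_x|^2\,dx\,dt$ up to harmless boundary terms. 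Collecting everything, the $\int_0^T|y_x(t,1)|^2\,dt$ term is isolated and bounded by $\sup_t E(t)$ plus $\int_0^T\int_0^1|f|\,|x y_x|\,dx\,dt\le \|f\|_{L^1(0,T;L^2(0,1))}\sup_t\|y_x(t)\|_{L^2}$-type terms, all controlled by $\|f\|_{L^1(0,T;L^2(0,1))}^2+E(0)$ through \eqref{ineq1} and Young's inequality.

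The main obstacle is the careful treatment of the boundary terms at the degenerate endpoint $x=0$: one must verify that all the $x=0$ contributions arising in the integrations by parts (terms like $x|y_t|^2$, $x^{1+\alpha}|y_x|^2$, $x^\alpha y_x y$) actually vanish or are absorbable. For $\alpha\in(0,1)$ this follows from $y\in H^1_\alpha$ (so $x^{\alpha/2}y_x\in L^2$, and boundary traces at $0$ make sense with $y(0)=0$); for $\alpha\in[1,2)$ one relies instead on the Neumann-type condition $(x^\alpha y_x)(t,0)=0$ together with the Hardy--Poincar\'e inequality (Proposition~\ref{hardy}) to guarantee the requisite decay near $x=0$, which is exactly why the regularity results of \cite{araujo2023regularity}/\cite{cannarsa2015wavecontrol} are invoked. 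Since these density and boundary-trace justifications are somewhat technical, I would state them as being handled by approximation with $H^2_\alpha\times H^1_\alpha$ data (using Proposition~\ref{well-pos}) and refer to \cite{cannarsa2015wavecontrol,zhang2017null} where the analogous multiplier identities for the degenerate wave operator are carried out in detail.
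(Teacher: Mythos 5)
The paper does not actually prove this proposition; it is quoted from \cite{cannarsa2015wavecontrol,zhang2017null}, where it is established by exactly the multiplier $x\,y_x$ that you propose, so your argument is the standard one and is essentially correct. Two bookkeeping points worth fixing: the recombination of the interior term $\int_0^T\int_0^1 x^{1+\alpha}y_xy_{xx}\,dx\,dt$ returns a boundary contribution $+\tfrac12\int_0^T|y_x(t,1)|^2\,dt$ at $x=1$, so the net coefficient of the trace term is $-\tfrac12$ rather than $-1$ (harmless, but not a ``harmless boundary term'' in the sense of being negligible); and the estimate $\|x\,y_x(t)\|_{L^2(0,1)}\le\|x^{\alpha/2}y_x(t)\|_{L^2(0,1)}\le\sqrt{2E(t)}$, which you need to control the time-boundary term $\bigl[\int_0^1 y_t\,x\,y_x\,dx\bigr]_0^T$ and the source term $\int_0^T\int_0^1 f\,x\,y_x\,dx\,dt$, uses precisely $\alpha<2$ and should be stated explicitly.
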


In the following,  we will present the definition of solutions for \eqref{pb1}, with less regular initial data, proposed by Lions-Magenes (see \cite[on page 47]{lions1988controlabilite}).


Let us also consider the following backward in time problem
\begin{equation}\label{back-pb}
	\begin{cases}
	z_{tt}- \displaystyle \left(x^\alpha z_{x} \right)_x=g, & (t,x)\in Q,\\
	z(t,1)=0,& t\in (0,T),\\
 \begin{cases}
     z(t,0)=0, & \text{ if } \alpha \in (0,1)\\
     \lim\limits_{x\to 0^+}(x^\alpha z_x)(t,x)=0& \text{ if } \alpha \in [1,2)
 \end{cases}& t\in (0,T),\\
	z(T,x)=z_{t} (T,x)=0, &  x\in (0,1).
 \end{cases}
	\end{equation}
The change of variable $t\longmapsto T-t$ transforms \eqref{back-pb} into \eqref{pb1} with zero as initial data    {and \(g\in L^1(0,T;L^2(0,1))\), which implies it is well-posed, by Proposition \ref{well-pos}.
}

\begin{defn}\label{trans}
 Given  $f\in L^1(0,T;L^2(0,1))$ and $(y^0,y^1)\in L^2(0,1)\times H^{-1}_\alpha $, we say that  $y\in L^\infty(0,T;L^2(0,1))$  is a \textbf{very weak solution}  
(or a \textbf{solution by transposition}) of \eqref{pb1}
if, for each $ F\in {\color{black} L^1(0,T;L^2(0,1)) }$, 
\begin{equation*}
        \intq yF\,dxdt=-\pl{y^0}{\theta_t(0,\cdot )}+\dpa{y^1}{\theta(0,\cdot )}+\intq f\theta \,dxdt,
\end{equation*}
where $\theta=\theta(t,x)$ is a weak solution of \eqref{back-pb} with $g=F$. 

\end{defn}

\begin{rem}
{It is important to observe that the initial conditions are not required in the definition of solution by transposition. Nevertheless, they can be recovered from the integral formulation, proceeding as in {\cite{medeiros2013introduction}}. In Appendix {\ref{app-trans}}, we will present a proof when $f=0$.}
\end{rem}

Analogously, we will give the definition of solution by transposition for the boundary controllability problem.

\begin{defn}\label{trans-b}
 Given $h\in L^2(0,T)$ and $(u^0,u^1)\in L^2(0,1)\times H^{-1}_\alpha $, we say that $u \in L^\infty(0,T;L^2(0,1))$  is a \textbf{very weak solution}  
(or a \textbf{solution by transposition}) of \eqref{bcp}
    if, for each $F\in L^1(0,T;L^2(0,1))$, 
\begin{equation*}
    \intq uF\,dxdt=-\pl{u^0}{\theta_t(0,\cdot)}+\dpa{u^1}{\theta(0,\cdot)}+\int_0^T h(t)\theta_x(t,1) \,{dt},
\end{equation*}
where $\theta=\theta(t,x)$ is a weak solution of \eqref{back-pb} with $g=F$. 
\end{defn}

The proof of the following well-posedness result will be given in Appendix \ref{app-well-pos}.
\begin{prop}\label{well-pos-trans}
	Given  $f\in L^1(0,T;L^2(0,1))$ and $(y^0,y^1)\in L^2(0,1)\times H^{-1}_\alpha$, there exists a unique solution by transposition  $y\in C^0([0,T];L^2(0,1))\cap C^1([0,T];H^{-1}_\alpha)$ of \eqref{pb1}.	In addition, there exists a positive constant $C=C(T,\alpha)$ such that
	\begin{equation}\label{ineq-2.3}
 \begin{split}
& 	\sup_{t\in[0,T]}\left( \n{y(t,\cdot )}{L^2(0,1)}^2+\n{y_t(t,\cdot )}{H_\alpha^{-1}}^2 \right) \\
& \qquad	\leq C\left(\n{f}{L^1(0,T;L^2(0,1))}^2+\n{y^0}{L^2(0,1)}^2+\n{y^1}{H_\alpha^{-1}}^2 \right).
 \end{split}
	\end{equation}
\end{prop}

\subsection{Further results}

Finally, we will present an observability inequality for solutions of \eqref{2.3}, proved in \cite[Theorem 3.3]{zhang2017null}. For any $\alpha\in (0,2)$, let us set 
\begin{equation}\label{Ta}
    T_\alpha=\frac{4}{2-\alpha}.
\end{equation}

\begin{prop}\label{ThObsZhang}
  Given $T>T_\alpha$, for any $(v^0,v^1)\in \ha\times L^2(0,1)$, there exists $C=C(T,\alpha)>0$, such that any solution $v$ of \eqref{2.3} satisfies
 \begin{equation}\label{ObsZhang}
\n{v^0}{\ha}^2+\n{v^1}{L^2(0,1)}^2\leq C \int_0^T |v_x(t,1)|^2\,dt.
 \end{equation}
\end{prop}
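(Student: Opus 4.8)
\textbf{Proof proposal for Proposition \ref{ThObsZhang}.}

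The plan is to derive the observability inequality \eqref{ObsZhang} from a multiplier identity for the degenerate wave equation, in the spirit of classical Rellich-type arguments adapted to the weight $x^\alpha$. First I would establish, for sufficiently regular solutions of \eqref{2.3}, conservation of the energy $E(t) = \tfrac12\into(|v_t|^2 + x^\alpha|v_x|^2)\,dx$, so that $E(t) \equiv E(0) = \tfrac12(\n{v^1}{L^2(0,1)}^2 + \n{v^0}{\ha}^2)$ up to the equivalence of $\n{\cdot}{\ha}$ with the seminorm $\n{x^{\alpha/2}(\cdot)_x}{L^2(0,1)}$ (which holds by the Hardy--Poincaré inequality, Proposition \ref{hardy}, in the WDC and also via the boundary condition at $x=1$). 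Then the right-hand side of \eqref{ObsZhang} controls $T$ times the energy once we prove the reverse multiplier bound.

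The key step is to test the equation $v_{tt} - (x^\alpha v_x)_x = 0$ against the multiplier $x v_x$ (the natural degenerate analogue of $x\,\partial_x$ used in the non-degenerate case) and integrate over $Q$. Integration by parts in $t$ produces the term $[\into v_t\, x v_x\,dx]_0^T$, which is bounded by $C\,E(0)$ uniformly. Integration by parts in $x$, using $v(t,1)=0$ and the degenerate boundary condition at $x=0$ (either $v(t,0)=0$ in the WDC or $(x^\alpha v_x)(t,0)=0$ in the SDC), yields a bulk term proportional to $\into(|v_t|^2 + (\alpha-1)x^\alpha|v_x|^2 + \cdots)\,dx\,dt$ plus the boundary term $-\tfrac12\int_0^T |v_x(t,1)|^2\,dt$ coming from the weight evaluated at $x=1$ (where $x^\alpha = 1$). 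Collecting terms, one obtains an identity of the schematic form
\begin{equation*}
\frac12\int_0^T |v_x(t,1)|^2\,dt = c_\alpha \int_0^T E(t)\,dt + (\text{boundary-in-}t\text{ terms}) = c_\alpha\, T\, E(0) + O(E(0)),
\end{equation*}
where the constant $c_\alpha$ depends on $\alpha$ through the exponent in $(x^\alpha v_x)_x = x^\alpha v_{xx} + \alpha x^{\alpha-1}v_x$; the precise bookkeeping is what forces the threshold $T_\alpha = \frac{4}{2-\alpha}$. For the lower-order and cross terms (e.g. $\into x^{\alpha-1} v v_x$-type contributions) I would invoke Proposition \ref{hardy} to absorb them into the energy, which is exactly where the restriction $\alpha \ne 1$ would normally enter — but since Proposition \ref{ThObsZhang} is quoted from \cite[Theorem 3.3]{zhang2017null}, the SDC case $\alpha \in [1,2)$ is handled there by a separate weighted Hardy inequality valid up to the boundary condition $(x^\alpha v_x)(0)=0$.

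The main obstacle will be making the multiplier computation rigorous at the degenerate endpoint $x=0$: the boundary terms at $x=0$ involve $x^\alpha |v_x|^2$, $x^{\alpha-1}|v|^2$ and $x|v_t|^2$ evaluated in the limit $x\to 0^+$, and one must show these vanish. This requires a density/regularization argument (approximating $(v^0,v^1)\in \ha\times L^2(0,1)$ by smooth data, or truncating the domain to $(\delta,1)$ and passing $\delta\to 0$) together with the sharp characterization of $\ha$ — in particular that elements of $\ha$ in the SDC satisfy $x^{(\alpha-1)/2} v \to 0$, which follows from the Hardy--Poincaré inequality. A secondary technical point is verifying the equivalence $\n{v^0}{\ha}^2 \asymp \n{x^{\alpha/2}v^0_x}{L^2(0,1)}^2$ so that $E(0)$ genuinely dominates the left-hand side of \eqref{ObsZhang}; this again is immediate from Proposition \ref{hardy} combined with $v^0(1)=0$. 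Since the statement is attributed to \cite{zhang2017null}, I would present this as a sketch and refer the reader there for the full details of the endpoint analysis and the optimality of $T_\alpha$.
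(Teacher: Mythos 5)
The paper does not actually prove Proposition \ref{ThObsZhang}: it is imported verbatim from \cite[Theorem 3.3]{zhang2017null}, so there is no in-paper argument to measure your attempt against, and your closing decision to present a sketch and defer to that reference is exactly what the authors themselves do. That said, your multiplier outline is consistent with how the cited result is established: energy conservation (which the paper records separately as Lemma \ref{energy}), the multiplier $xv_x$, and the boundary term $\tfrac12\int_0^T|v_x(t,1)|^2\,dt$ produced at $x=1$ are all the right ingredients, and the threshold $T_\alpha=\tfrac{4}{2-\alpha}$ does emerge from bounding the time-boundary term $\bigl[\into v_t\,xv_x\,dx\bigr]_0^T$ against the conserved energy using $x\le x^{\alpha/2}$ on $(0,1)$. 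Two places where your sketch is only schematic are precisely where the real work sits: first, the bulk term produced by the spatial integration by parts carries the coefficient $\tfrac{1-\alpha}{2}$ on $\into x^\alpha|v_x|^2\,dx$ (you wrote $(\alpha-1)$), which changes sign at $\alpha=1$, so the strongly degenerate case cannot be absorbed by Hardy--Poincar\'e alone and requires the separate treatment you allude to; second, the vanishing of the boundary contributions at $x=0$ (and the very meaning of $v_x(t,1)$ for weak solutions, i.e.\ the hidden regularity of Proposition \ref{prop-trace}) needs the density argument you mention. Since you flag both issues and correctly attribute the full proof to \cite{zhang2017null}, the proposal is acceptable as a faithful account of where the statement comes from.
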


In order to become the reading easier, we recollect the two main theorems proved in \cite{araujo2023regularity}.
They play a key role in the proof of our main results, stated in Section \ref{main}.  

\begin{thm}\label{th3.2-fabre92}
Given $\ep_0 \in (0,1)$, there exists a positive constant  $C=C(T,\alpha,\ep_0)$ such that, for any $(u^0,u^1)\in H_\alpha^1\times L^2(0,1)$ and $f\in L^1(0,T;L^2(0,1))$, if $u$ is a weak solution to \eqref{pb1}, then
\begin{equation*}
    \frac{1}{\ep^3}\intw |u(t,x)|^2 \dd  \leq C\left(\n{f}{L^1(0,T;L^2(0,1))}^2+\n{u^0}{H^1_\alpha}^2+\n{u^1}{L^2(0,1)}^2\right),\ \forall \ep \in (0,\ep _0 ].
\end{equation*}
\end{thm}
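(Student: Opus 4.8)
The plan is to estimate the boundary-strip integral $\frac{1}{\ep^3}\int_0^T\int_{1-\ep}^1 |u(t,x)|^2\,dx\,dt$ by comparing $u(t,x)$ with its trace $u(t,1)$ plus a correction controlled by $u_x$, and then using the hidden-regularity bound of Proposition \ref{prop-trace} together with the energy estimate \eqref{ineq1}. Concretely, for $x\in(1-\ep,1)$ and a.e.\ $t$, I would write
\begin{equation*}
u(t,x)=u(t,1)-\int_x^1 u_x(t,s)\,ds = -\int_x^1 u_x(t,s)\,ds,
\end{equation*}
using the boundary condition $u(t,1)=0$. Since on the strip $(1-\ep,1)$ the weight $x^\alpha$ is bounded below by $(1-\ep)^\alpha \geq (1-\ep_0)^\alpha$ (a positive constant independent of $\ep\le\ep_0$), Cauchy--Schwarz gives
\begin{equation*}
|u(t,x)|^2 \leq (1-x)\int_x^1 |u_x(t,s)|^2\,ds \leq \frac{\ep}{(1-\ep_0)^\alpha}\int_{1-\ep}^1 x^\alpha|u_x(t,s)|^2\,ds.
\end{equation*}
Integrating in $x$ over $(1-\ep,1)$ produces a factor $\ep$ (length of the strip), and integrating in $t$ then yields
\begin{equation*}
\intw |u(t,x)|^2\dd \leq \frac{\ep^2}{(1-\ep_0)^\alpha}\int_0^T\int_{1-\ep}^1 x^\alpha|u_x(t,s)|^2\,ds\,dt \leq \frac{\ep^2}{(1-\ep_0)^\alpha}\int_0^T 2E(t)\,dt.
\end{equation*}

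This only buys a factor $\ep^2$, whereas we need $\ep^3$; the loss is that I crudely bounded $\int_{1-\ep}^1 x^\alpha|u_x|^2\,dx$ by the full energy. The key refinement is to gain one more power of $\ep$ from the fact that we are integrating $|u_x|^2$ over a strip of width $\ep$ that abuts the boundary, where hidden regularity controls the \emph{trace} $u_x(\cdot,1)$ in $L^2(0,T)$ rather than $E(t)$ on the whole interval. The natural device is a multiplier / Rellich-type identity localized near $x=1$: multiply the equation by a suitable weight $q(x)u_x$ with $q$ supported near $1$ and $q(1)=1$ (the standard choice being $q(x)=x$ or a cutoff of it), integrate over $Q$, and integrate by parts. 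This produces, on the left, the boundary term $\frac12\int_0^T q(1)|u_x(t,1)|^2\,dt$ (times $x^{2\alpha}$-type factors, all positive constants on the strip), and on the right, interior integrals of $|u_t|^2$ and $x^\alpha|u_x|^2$ against $q'$ plus the forcing term $f$. Restricting $q$ to have $q'$ supported in $(1-\ep,1)$, one gets an estimate of the form $\int_0^T\int_{1-\ep}^1(|u_t|^2+x^\alpha|u_x|^2)\dd \lesssim \ep\big(\int_0^T|u_x(t,1)|^2\,dt + \text{forcing}\big) + \text{l.o.t.}$, and Proposition \ref{prop-trace} plus Proposition \ref{well-pos} convert the right side into $C\ep(\|f\|^2_{L^1(0,T;L^2)}+E(0))$. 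Feeding this improved strip-energy bound back into the pointwise estimate above gives the missing third power of $\ep$.

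The main obstacle is handling the multiplier argument uniformly in $\alpha\in(0,2)$, including the strongly degenerate range $\alpha\in[1,2)$: although the degeneracy is at $x=0$ and we are working near $x=1$, the boundary terms at $x=0$ generated by integration by parts must be shown to vanish (or be absorbed), which requires the Hardy--Poincaré inequality of Proposition \ref{hardy} and the precise boundary conditions encoded in the definition of $H^1_\alpha$; one must also be careful that the cutoff $q$ is chosen so that $q'\ge 0$ (or of one sign) on the strip so that no wrong-sign terms appear, and that all constants depending on $(1-\ep_0)$ stay bounded for $\ep\le\ep_0$. A cleaner alternative, which I would try first, is to invoke directly the boundary regularity results recalled from \cite{araujo2023regularity} (the trace-type estimates in a neighborhood of the boundary): if one of them already asserts $\int_0^T\int_{1-\ep}^1 x^\alpha|u_x|^2\dd \le C\ep(\|f\|^2+E(0))$ or an equivalent $H^1$-in-a-strip bound with an explicit $\ep$ factor, then the theorem follows immediately by combining it with the elementary Cauchy--Schwarz step above, and the whole proof collapses to two short paragraphs. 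Either way, the structure is: (i) pointwise control of $u$ on the strip by $\int_x^1|u_x|^2$, costing $\ep\cdot\ep=\ep^2$; (ii) an $O(\ep)$ bound for the $H^1$-energy on the strip via a boundary multiplier and hidden regularity; (iii) combine to get $\ep^3$, with the constant depending only on $T,\alpha,\ep_0$.
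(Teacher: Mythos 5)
The paper itself contains no proof of Theorem \ref{th3.2-fabre92}: it is explicitly imported as one of ``the two main theorems proved in \cite{araujo2023regularity}'', so there is no in-paper argument to compare against line by line. Judged on its own, your two-step structure is sound and is the natural (and, in substance, the intended) route: step (i), the pointwise bound $|u(t,x)|^2\le (1-x)\int_x^1|u_x(t,s)|^2\,ds$ combined with $x^\alpha\ge(1-\ep_0)^\alpha$ on the strip, correctly yields the factor $\ep^2$; step (ii), the localized multiplier $q(x)u_x$ with $q$ supported in $[1-\ep,1]$, $q(1)=1$ and $q'\sim \ep^{-1}\ge 0$, does produce the identity
\begin{equation*}
\frac12\intq q'\bigl(|u_t|^2+x^\alpha|u_x|^2\bigr)\dd=\frac12\int_0^T|u_x(t,1)|^2\,dt+\frac{\alpha}{2}\intq x^{\alpha-1}q|u_x|^2\dd-\Bigl[\into u_tqu_x\,dx\Bigr]_0^T+\intq fqu_x\dd,
\end{equation*}
whose right-hand side is bounded by $C\bigl(\n{f}{L^1(0,T;L^2(0,1))}^2+E(0)\bigr)$ via Propositions \ref{prop-trace} and \ref{well-pos}, giving the $O(\ep)$ strip-energy bound and hence the missing third power of $\ep$. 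Two remarks. First, your worry about boundary terms at $x=0$ and the need for Hardy--Poincar\'e is misplaced: since $q$ vanishes identically on $[0,1-\ep]$, no contribution at the degenerate endpoint ever appears, and the only $\alpha$-dependence enters through the harmless factor $x^{\alpha-1}\le (1-\ep_0)^{-1}x^\alpha$ on the strip; this is precisely why the estimate holds uniformly for all $\alpha\in(0,2)$. Second, the multiplier computation requires justification for weak solutions (regularize the data, establish the identity for strong solutions, and pass to the limit using the continuity of both sides in $C([0,T];H^1_\alpha)\cap C^1([0,T];L^2(0,1))$); this is routine but should be stated. With these points made explicit, your argument is a complete proof.
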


In order to state the second one, we need to introduce a definition, {mentioned several times throughout this paper.}

\begin{defn}\label{epdef}
     A  family of functions $(\varphi^0_\ep,\varphi^1_\ep,h_\ep )_{\ep\in (0,1)}\in  L^2(\Omega)\times H^{-1}_\alpha\times L^1(0,T;L^2(\Omega))$ is said to be an \textit{$\ep-$family} associated with $(\varphi^0,\varphi^1,h)\in L^2(\Omega)\times H^{-1}_\alpha\times L^1(0,T:L^2(\Omega))$ if
\begin{equation*}
\begin{aligned}
   & h_\ep \rightharpoonup h && \text{ in } L^1(0,T:L^2(\Omega)),\\
    & \varphi_\ep^0 \rightharpoonup \varphi^0 &&  \text{ in } L^2(\Omega),\\
    & \varphi_\ep^1 \rightharpoonup \varphi^1 && \text{ in } H^{-1}_\alpha, \text{ as } \ep\to 0^+
\end{aligned}
\end{equation*}
Given an $\ep-$family for $(\varphi^0,\varphi^1,h)$, let $\varphi_\ep$ be  the solution by transposition of \eqref{pb1}, with $(y^0,y^1,f)=(\varphi_\ep^0,\varphi_\ep^1 ,h_\ep)$. It will be called an $\ep-$\textit{solution}.
From Proposition \ref{well-pos-trans},
\begin{equation*}
    \varphi_\ep\in C^0([0,T];L^2(0,1))\cap C^1([0,T];H^{-1}_\alpha)
\end{equation*}
and
\begin{equation}\label{phi-limit}
\varphi_\ep \stackrel{\ast}{\rightharpoonup} \varphi  \text{ in } L^\infty(0,T;L^2(0,1)), 
\end{equation}
 where $\varphi$ is the solution by transposition of \eqref{pb1}, with $(y^0,y^1,f)=(\varphi^0,\varphi^1,h)$. We will say that 
{$\varphi$ is the  \textit{limit} of the $\ep-$solutions $(\varphi_{\ep} )_{\ep >0}$.}
When $h_\ep=0$, we will simply denote the $\ep-$family $(\varphi^0_\ep,\varphi^1_\ep,0)$  by $(\varphi^0_\ep,\varphi^1_\ep)\in  L^2(\Omega)\times H^{-1}_\alpha$.
\end{defn}

\begin{thm}\label{th3.1}
{Under the notations given in Definition {\ref{epdef}}}, let $(\varphi^0_\varepsilon,\varphi^1_\ep,h_\varepsilon,)$ be an $\ep-$family associated to $(\varphi^0,\varphi^1,h)\in L^2(\Omega)\times H^{-1}_\alpha\times L^1(0,T:L^2(\Omega))$ and denote by $\varphi_\ep$ its $\ep-$solution. Let  $\varphi$ be the limit of $(\varphi_{\ep} )_{\ep >0}$. If 
\begin{equation}\label{3.4}
    \frac{1}{\ep^3}\intw |\varphi_\ep(t,x)|^2 \dd \leq C,
\end{equation}
where $C$ does not depend on $\ep$, then $\varphi_x(\cdot,1)\in L^2(0,T)$ and 
\begin{equation}\label{liminf}
    \frac{1}{3}\n{\varphi_x(\cdot,1)}{L^2(0,T)}^2
    \leq \liminf_{\ep\to 0^+}\left(\frac{1}{\ep^3}\intw |\varphi_\ep (t,x)|^2\dd \right).
\end{equation}
\end{thm}

\section{Homogeneous adjoint system}\label{sec-obs}

In this section, we will present several equivalent norms involving the solutions of the homogeneous system \eqref{2.3}. In fact, these results are known as \textit{observability inequalities} and have an important role in the framework of controllability. 

Firstly, let us refer to the energy conservation principle which has been presented in \cite{cannarsa2015wavecontrol}:

\begin{lem}\label{energy}
    For any $(v^0,v^1)\in H^1_\alpha\times L^2(0,1)$ and $v$ solution of \eqref{2.3}, we have \[E(t)=E(0), \ \forall t\in[0,T].\]
\end{lem}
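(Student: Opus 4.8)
The plan is to obtain the conservation law by the classical energy method --- multiply the equation in \eqref{2.3} by $v_t$ and integrate over $(0,1)$ --- and then to cope with the limited regularity of weak solutions by a density argument.

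First I would run the computation formally. Multiplying $v_{tt}-(x^\alpha v_x)_x=0$ by $v_t$ and integrating over $(0,1)$ gives
\[
\into v_{tt}v_t\,dx-\into (x^\alpha v_x)_x\, v_t\,dx=0 .
\]
The first term is $\tfrac12\frac{d}{dt}\into |v_t|^2\,dx$. For the second, integrating by parts in $x$,
\[
-\into (x^\alpha v_x)_x\, v_t\,dx=\tfrac12\frac{d}{dt}\into x^\alpha|v_x|^2\,dx-\Big[x^\alpha v_x v_t\Big]_{x=0}^{x=1}.
\]
The endpoint contribution at $x=1$ vanishes because $v(t,1)=0$ for every $t$ forces $v_t(t,1)=0$. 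The contribution at $x=0$ vanishes as well: in the weakly degenerate case $v_t(t,\cdot)\in\ha$ vanishes at $x=0$ while $x^\alpha v_x$ stays bounded near $0$, and in the strongly degenerate case the flux $(x^\alpha v_x)(t,0)=0$ by the boundary condition in \eqref{2.3} (equivalently, $x^\alpha v_x=x^{\alpha/2}(x^{\alpha/2}v_x)\to 0$ as $x\to0^+$ for regular solutions). Adding the two contributions yields $\frac{d}{dt}E(t)=0$, hence $E(t)=E(0)$.

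The point that needs care is that the identity above is only licit for sufficiently smooth solutions, since it involves $v_{tt}$, the traces of $x^\alpha v_x$ at the endpoints, and the identification $\into x^\alpha v_x v_{xt}\,dx=\tfrac12\frac{d}{dt}\into x^\alpha|v_x|^2\,dx$. I would therefore first establish $E(t)=E(0)$ for strong solutions, i.e. for initial data $(v^0,v^1)$ in the domain of the generator of \eqref{2.3} in the energy space (so that $v\in C([0,T];H_\alpha^2)\cap C^1([0,T];\ha)\cap C^2([0,T];L^2(0,1))$), for which every step is justified and the endpoint conditions killing the boundary terms are built into the functional setting. Then, for an arbitrary $(v^0,v^1)\in\ha\times L^2(0,1)$, I would choose approximating data $(v^0_n,v^1_n)$ in that domain with $(v^0_n,v^1_n)\to(v^0,v^1)$ in $\ha\times L^2(0,1)$, let $v_n$ be the corresponding strong solutions, and invoke the well-posedness estimate \eqref{ineq1} with $f=0$ to get $v_n\to v$ in $C([0,T];\ha)\cap C^1([0,T];L^2(0,1))$. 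Since $E_n(t)=E_n(0)$ for each $n$ and $t\mapsto E(t)$ is continuous for this topology, passing to the limit gives $E(t)=E(0)$ for the original solution.

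The main obstacle I anticipate is precisely the justification of the integration by parts at the degenerate endpoint $x=0$, namely showing that $x^\alpha v_x\, v_t\to0$ as $x\to0^+$; this is where the distinction between the weakly and strongly degenerate boundary conditions enters, and where one uses the additional regularity of strong solutions (a trace for $x^\alpha v_x$, or the Hardy--Poincar\'e control of $x^{\alpha/2}v_x$ from Proposition \ref{hardy}). Once this is settled at the level of strong solutions, the density step is routine.
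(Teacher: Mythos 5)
The paper offers no proof of Lemma \ref{energy} at all --- it simply cites \cite{cannarsa2015wavecontrol} --- so there is no internal argument to compare against; your proposal (the classical multiplier $v_t$ applied to strong solutions, followed by density in $\ha\times L^2(0,1)$ via the continuity estimate \eqref{ineq1}) is the standard route and is correct in outline. The one step you state too quickly is the vanishing of the boundary term at $x=0$ in the strongly degenerate case: there it is \emph{not} enough to say that the flux $(x^\alpha v_x)(t,0)$ vanishes, because for $\alpha\in[1,2)$ elements of $\ha$ --- hence $v_t(t,\cdot)$ even for a strong solution --- are only locally absolutely continuous on $(0,1]$ and may be unbounded at the origin, so the product $x^\alpha v_x\, v_t$ is a priori of the form $0\cdot\infty$. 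The fix is the quantitative version of exactly the two facts you invoke: for $v(t,\cdot)\in H^2_\alpha$ one has $x^\alpha v_x=x^{\alpha/2}\,(x^{\alpha/2}v_x)=O(x^{\alpha/2})$ since $x^{\alpha/2}v_x\in H^1(0,1)\subset C([0,1])$, while Cauchy--Schwarz gives
\[
|v_t(t,x)|\le |v_t(t,1)|+\Bigl(\int_x^1 s^{-\alpha}\,ds\Bigr)^{1/2}\nl{s^{\alpha/2}v_{tx}(t,\cdot)}=O\bigl(x^{(1-\alpha)/2}\bigr)
\]
for $\alpha>1$ (and $O(\sqrt{\log(1/x)})$ for $\alpha=1$), so the product is $O(x^{1/2})$ up to a logarithm and tends to zero. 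With that estimate supplied, your proof closes; note also that the same conclusion can be reached without any boundary traces by the semigroup formulation the paper sketches for the transposition setting --- skew-adjointness of the wave generator with respect to the energy inner product yields a unitary group, hence conservation of $E$.
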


Combining Propositions \ref{prop-trace} and \ref{ThObsZhang} we have the following:

\begin{prop}\label{norm.equiv1}
    For any $T>T_\alpha$, there exist two constants $A,B>0$, only depending on $T$ and $\alpha$, such that,  for any $(v^0,v^1)\in H_\alpha^1\times L^2(0,1)$ and $v$ the solution of \eqref{2.3}, we have 
\[\|v^0\|^2_{H_\alpha^1}+\|v^1\|^2_{L^2(0,1)}\leq A\int_0^T|v_x(t,1)|^2\,dt \leq B(\|v^0\|^2_{H_\alpha^1}+\|v^1\|^2_{L^2(0,1)}).\]
\end{prop}

In what follows, some results involving other equivalent norms will be given.

\begin{prop}\label{norm.equiv2}
   For any $T>T_\alpha$, there exist two  constants $A,B>0$, only depending on $T$ and $\alpha$, such that,  for any  $(v^0,v^1)\in H_\alpha^1\times L^2(0,1)$ and $v$ the solution of \eqref{2.3}, we have 
\begin{equation}\label{ineq.norm.equiv2}
\|v^0\|^2_{H_\alpha^1}+\|v^1\|^2_{L^2(0,1)}\leq A\intq |v_t|^2\,dx\,dt \leq B(\|v^0\|^2_{H_\alpha^1}+\|v^1\|^2_{L^2(0,1)}).
\end{equation}

\end{prop}

\begin{proof}

The second inequality is an immediate consequence of the energy estimate presented in Proposition \ref{well-pos}. In this case, let us focus on the first one.

{Firstly, let us prove it for a more regular initial data. Namely, if $(v^0,v^1)\in \haa\times \ha$, we know that $v\in C^0([0,T];\haa)\cap C^1([0,T];\ha)$ is a strong solution of {\eqref{2.3}.}} Thus, let us set $\rho(t)=t^2(T-t)^2$ and $\eta(x,t)=\rho(t)v(x,t)$. Multiplying the equation $v_{tt}-(x^\alpha v_x)_x=0$ by $\eta$ and integrating in $Q$, we get
 \begin{equation}\label{n1}  
\intq \rho|v_t|^2\,dx\,dt+\intq \rho_tvv_t\,dx\,dt=\intq \rho x^\alpha |v_x|^2\,dx\,dt.
\end{equation} 

On the other hand, since $\rho_t^2\leq C\rho$, using Young's inequality with $\delta>0$,  we get 
\begin{equation*}
\begin{split}
\left|\intq \rho_t vv_t\,dx\,dt\right| & \leq  C \delta\intq \rho |v|^2\,dx\,dt+\frac{1}{4\delta}\intq |v_t|^2\,dx\,dt\\
& \leq  C \delta\intq \rho x^{\alpha_1-2}|v|^2\,dx\,dt+\frac{1}{4\delta}\intq |v_t|^2\,dx\,dt
\end{split}
\end{equation*}

where $\alpha_1\in(0,2)-\{1\}$ and $\alpha<\alpha_1$. Moreover, from Hardy-Poincar\'e inequality, given in Proposition \ref{hardy}, we have \begin{equation*}\int_0^T\int_0^1\rho  x^{\alpha_1-2}|v|^2\,dx\,dt\leq C\int_0^T\int_0^1\rho x^{\alpha_1}|v_x|^2\,dx\,dt\leq C\int_0^T\int_0^1\rho x^\alpha|v_x|^2\,dx\,dt.
\end{equation*} 
Hence,
\begin{equation}\label{n2}
    \left|\intq \rho_t vv_t\,dx\,dt\right|\leq  C\delta\intq \rho x^{{\alpha}}|v_x|^2\,dx\,dt+\frac{1}{4\delta}\intq |v_t|^2\,dx\,dt.
\end{equation} 
Using \eqref{n2} in \eqref{n1}, if $\delta$ is sufficiently {small, we deduce} that \[\intq \rho x^\alpha|v_x|^2\,dx\,dt\leq C\intq |v_t|^2\,dx\,dt.\]
Note that $\int_0^T\rho(t)\,dt=T^5/30$. Thus, from Lemma \ref{energy}, 
\begin{multline}
\frac{T^5}{30}(\|v^0\|^2_{H_\alpha^1}+\|v^1\|^2_{L^2(0,1)})=\int_0^T\rho(t)\,dt(\|v^0\|^2_{H_\alpha^1}+\|v^1\|^2_{L^2(0,1)})=\int_0^T\rho(t)\,dt\, 2E(0)\\=\int_0^T 2\rho(t) E(t)\,dt=\intq \rho\left(|v_t|^2+x^\alpha|v_x|^2\right)\,dx\,dt \leq C \intq |v_t|^2\,dx\,dt.
\end{multline}

Finally, if $(v^0,v^1)\in \ha\times L^2(0,1)$, we can take a sequence $(v_n^0,v_n^1)_{n\in \mathbb{N}}\in \haa\times \ha$ such that 
\[v_n^0\to v^0 \text{ in } \ha \text{ and }\ v_n^1\to v^1 \text{ in } L^2(0,1),\]
From Proposition \ref{well-pos}, we have 
\[v_n \to v \text{ in } C^0([0,T];\ha) \text{ and } v_{nt}\to v_t \text{ 
 in } C^0([0,T];L^2(0,1)), \]
 where $v_n$ and $v$ are the solutions of \eqref{2.3} with initial data $(v_n^0,v_n^1)$ and $(v^0,v^1)$, respectively. Therefore, since $v_n$ satisfies the inequality \eqref{ineq.norm.equiv2}, passing to the limit as $n\to +\infty$, we have the desired result.
\end{proof}

\begin{rem}
In the rest of the paper, whenever we perform integration by parts for a weak solution, we have in mind the same previously given approach: we firstly obtain the estimates for a strong solution and then apply a density argument.
\end{rem}

\begin{prop}\label{norm.equiv3}
For any $T>T_\alpha>0$, there exist two constants $A,B>0$, only depending on $T$ and $\alpha$, such that, for any  $(v^0,v^1)\in L^2(0,1)\times H_\alpha^{-1}$, the very weak solution $v$ of \eqref{2.3} satisfies 
\[\|v^0\|^2_{L^2(0,1)}+\|v^1\|^2_{H_\alpha^{-1}}\leq A\intq |v|^2\,dx\,dt \leq B\left(\|v^0\|^2_{L^2(0,1)}+\|v^1\|^2_{H_\alpha^{-1}}\right).\]
\end{prop}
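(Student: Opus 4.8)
\textbf{Proof plan for Proposition \ref{norm.equiv3}.}
The strategy is to transfer the $H_\alpha^1\times L^2$ observability of Proposition \ref{norm.equiv2} down one level of regularity by a duality/transposition argument, exploiting that the operator $B$ generating the dynamics is skew-adjoint (as recalled in the discussion preceding Proposition \ref{well-pos-trans}). The second (direct) inequality is the easy half: it is exactly the energy-type bound \eqref{ineq-2.3} from Proposition \ref{well-pos-trans} applied to the very weak solution $v$ with $f=0$, giving $\intq|v|^2\,dx\,dt\le T\sup_{t}\n{v(t)}{L^2(0,1)}^2\le B(\n{v^0}{L^2(0,1)}^2+\n{v^1}{H_\alpha^{-1}}^2)$.

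For the first inequality I would argue as follows. Introduce the ``integration/lifting'' operator: given $(v^0,v^1)\in L^2(0,1)\times H_\alpha^{-1}$, let $w$ be the solution of \eqref{2.3} (as a weak solution, in $C^0([0,T];H_\alpha^1)\cap C^1([0,T];L^2)$) with the regularized data $(w^0,w^1)=(\,(-(x^\alpha\partial_x)_x)^{-1}\text{-primitive of }v^1,\; -v^0$ suitably interpreted$)$; more precisely one chooses $w$ so that $w_t=v$ in an appropriate sense, i.e. $w$ solves the same equation with initial data one derivative smoother, arranged so that $\partial_t w=v$. Since the equation is linear, autonomous and time-translation invariant, differentiating a weak solution in $t$ produces a very weak solution, and conversely any very weak solution $v$ is the time-derivative of a weak solution $w$ whose data $(w^0,w^1)\in H_\alpha^1\times L^2(0,1)$ satisfy $\n{w^0}{H_\alpha^1}\sim\n{v^1}{H_\alpha^{-1}}$ and $\n{w^1}{L^2}=\n{v^0}{L^2}$ (the first equivalence is precisely the definition \eqref{dual-prod} of the $H_\alpha^{-1}$-inner product, which makes $-(x^\alpha\partial_x)_x\colon H_\alpha^1\to H_\alpha^{-1}$ an isometric isomorphism). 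Then $\intq|v|^2\,dx\,dt=\intq|w_t|^2\,dx\,dt$, and Proposition \ref{norm.equiv2} applied to $w$ gives
\[
\n{w^0}{H_\alpha^1}^2+\n{w^1}{L^2(0,1)}^2\le A\intq|w_t|^2\,dx\,dt=A\intq|v|^2\,dx\,dt,
\]
which is the desired estimate once the equivalences $\n{w^0}{H_\alpha^1}\sim\n{v^1}{H_\alpha^{-1}}$, $\n{w^1}{L^2}=\n{v^0}{L^2}$ are inserted. The constants $A,B$ inherit dependence only on $T$ and $\alpha$ from Propositions \ref{norm.equiv2} and \ref{well-pos-trans}.

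The main obstacle is making the correspondence ``$v\leftrightarrow w$ with $w_t=v$'' rigorous at the level of very weak solutions: one must check that if $v$ is the solution by transposition with data $(v^0,v^1)\in L^2\times H_\alpha^{-1}$, then setting $w(t):=w^0+\int_0^t v(s)\,ds$ with the correct choice of $w^0\in H_\alpha^1$ (determined from $v^1$ via Lax–Milgram in \eqref{dual-prod}) indeed yields a genuine weak solution of \eqref{2.3} in the sense of Definition \ref{weak}, with $w_t=v$ and $w_t(0)=v^0$. This is a density/approximation argument: it holds for smooth data by direct computation, and one passes to the limit using the well-posedness estimates \eqref{ineq1} and \eqref{ineq-2.3} together with the fact that $B$ is skew-adjoint, so that the semigroup commutes with $B$ and hence with time differentiation on the appropriate domain. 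Once this identification is in hand, the proposition follows immediately from Proposition \ref{norm.equiv2}; no new Carleman-type estimate is needed.
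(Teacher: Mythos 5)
Your proposal is correct and follows essentially the same route as the paper: the easy half from the energy estimate of Proposition \ref{well-pos-trans}, and the hard half by lifting $v$ to the weak solution $w(t,x)=\int_0^t v(s,x)\,ds+\varphi(x)$ with $(x^\alpha\varphi_x)_x=v^1$ and applying Proposition \ref{norm.equiv2} to $w$. The only cosmetic difference is that the paper needs just the one-sided bound $\|\varphi\|_{H_\alpha^1}\ge\|v^1\|_{H_\alpha^{-1}}$ rather than the full norm equivalence you invoke.
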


\begin{proof}
    Again, the second inequality comes immediately from the energy estimate presented in Proposition \ref{well-pos-trans}.

    In order to obtain the first one, let us take $(v^0,v^1)\in L^2(0,1)\times H_\alpha^{-1}$ and let $\varphi\in H_\alpha^1$ be the solution to the elliptic problem \eqref{ellip-eq}, with $\xi=v^1$. Setting 
    \[
w(t,x)=\int_0^tv(s,x)\,ds+\varphi(x),
    \]
    we have that $w$ is the weak solution of \eqref{2.3}, with the initial data $(\varphi,v^0)\in H_\alpha^1\times L^2(0,1)$ 
 {(see Proposition {\ref{lifting}})}. Hence, we can use Proposition \ref{norm.equiv2} to deduce that there exist positive constants $A,B>0$ such that 
 \[\|\varphi\|^2_{H_\alpha^1}+\|v^0\|^2_{L^2(0,1)}\leq A\intq |w_t|^2\,dx\,dt \leq B\left(\|\varphi\|^2_{H_\alpha^1}+\|v^0\|^2_{L^2(0,1)}\right).
\]
   Finally, since $w_t=v$ and $\|\varphi\|_{H_\alpha^1} \geq \|v^1\|_{H_\alpha^{-1}}$, the desired result follows.
\end{proof}

\begin{prop}\label{norm.equiv4}
 Given  $T>T_\alpha$ and $\ep_0 \in[0,1)$, there exists  a constant $C= C(T,\alpha,\ep_0) >0$ such that,  for any $(v^0,v^1)\in H_\alpha^1\times L^2(0,1)$, $v$ the solution of \eqref{2.3}, and  {for any}  $\ep \in (0,\varepsilon_0 )$, we have 
\[\|v^0\|^2_{H_\alpha^1}+\|v^1\|^2_{L^2(0,1)}\leq \frac{C}{\varepsilon}\int_0^T\int_{1-\varepsilon}^1(|v_t|^2+x^\alpha|v_x|^2)\,dx\,dt.\]
\end{prop}
\begin{proof}
    Let us take a cut-off function $h\in C^1([0,1])$ satisfying $0\leq h\leq1$ in $[0,1]$, $h=0$ in $[0,1-\varepsilon]$, $h=1$ in $[1-\frac{\varepsilon}{2},1]$ and 
\begin{equation}\label{b0}|h_x|\leq \frac{C}{\varepsilon}\ \text{ in } \ \left[1-\varepsilon,1-\frac{\varepsilon}{2}\right],
\end{equation}
where the constant $C>0$ does not depend on $\varepsilon$.

Now let us define $\sigma(t,x)=t(T-t)h(x)$. Multiplying the equation $v_{tt}-(x^\alpha v_x)_x=0$ by $\sigma x^\alpha v_x$ and integrating in $Q$, we obtain
\begin{equation}\label{n3}
\intq v_{tt}\sigma x^\alpha v_x\,dx\,dt=\intq \sigma x^\alpha v_x(x^\alpha v_x)_x\,dx\,dt.
\end{equation}
Integrating by parts, we see that 
\begin{equation*}
\begin{split}
\intq v_{tt}\sigma x^\alpha v_x\,dx\,dt& = -\int_0^T\int_0^1\sigma_t x^\alpha v_x v_t\,dx\,dt-\int_0^T\int_0^1\sigma x^\alpha v_{xt} v_t\,dx\,dt\\
& = -\int_0^T\int_0^1\sigma_t x^\alpha v_x v_t\,dx\,dt+\frac{1}{2} \int_0^T\int_0^1 (x^\alpha \sigma)_x |v_t|^2\,dx\,dt\\
& = -\int_0^T\int_0^1\sigma_t x^\alpha v_x v_t\,dx\,dt+\frac{1}{2} \int_0^T\int_{1-\ep}^1 \alpha x^{\alpha-1} \sigma |v_t|^2\,dx\,dt\\ & \hspace{0.4cm}+\frac{1}{2} \int_0^T\int_{0}^1  x^{\alpha} \sigma_x |v_t|^2\,dx\,dt.
\end{split}
\end{equation*}
and 
\begin{equation*}
\begin{split}
\intq \sigma x^\alpha v_x(x^\alpha v_x)_x\,dx\,dt & = 
\frac{1}{2}\intq\sigma (x^{2\alpha}|v_x|^2)_x\,dx\,dt\\
& =-\frac{1}{2}\intq x^{2\alpha}\sigma_x|v_x|^2\,dx\,dt+\frac{1}{2}\int_0^T\sigma(t,1)|v_x(t,1)|^2\,dt    
\end{split}
\end{equation*}
Using these identities in \eqref{n3}, we take
\begin{equation*}
\begin{split}
\frac{1}{2}\int_0^T\sigma(t,1)|v_x(t,1)|^2\,dt= & -\int_0^T\int_0^1\sigma_t x^\alpha v_x v_t\,dx\,dt+\frac{1}{2} \int_0^T\int_{1-\ep}^1 \alpha x^{\alpha-1} \sigma |v_t|^2\,dx\,dt\\
& +\frac{1}{2} \int_0^T\int_{0}^1  x^{\alpha} \sigma_x |v_t|^2\,dx\,dt+\frac{1}{2}\intq x^{2\alpha}\sigma_x|v_x|^2\,dx\,dt.
\end{split}
\end{equation*}
Now, let us estimate each integral on the right-hand side. For the first one, we have
\begin{equation*}
\begin{split}
\int_0^T\int_0^1|\sigma_t x^\alpha v_x v_t|\,dx\,dt & =\intw x^\alpha |\sigma_t v_xv_t|\dd \\
& \leq C \intw |v_t||x^{\alpha/2}v_x|\dd\\
& \leq C \intw \left(|v_t|^2 + x^{\alpha}|v_x|^2\right)\dd\\
& \leq \frac{C}{\ep} \intw \left(|v_t|^2 + x^{\alpha}|v_x|^2\right)\dd.
\end{split}
\end{equation*}
For the second one, we must pay attention to the term $x^{\alpha-1}$. For $\alpha\in [1,2)$ it is bounded by 1, but, for $\alpha\in (0,1)$, it is bounded by $(1-\ep_0)^{\alpha-1}$. Hence,
\begin{equation*}
\begin{split}
\frac{1}{2} \int_0^T\int_{1-\ep}^1 \alpha x^{\alpha-1} \sigma |v_t|^2\,dx\,dt 
&\leq C_{T,\alpha,\ep_0} \int_0^T\int_{1-\ep}^1 |v_t|^2\,dx\,dt\\
& \leq \frac{C}{\ep} \int_0^T\int_{1-\ep}^1 |v_t|^2\,dx\,dt.
\end{split}
\end{equation*}
For the last {two integrals}, it suffices to recall that $|h_x|\leq C/\ep$ in $[1-\ep,1-\ep/2]$ and $h_x$ vanishes in $[0,1]\setminus [1-\varepsilon, 1-\varepsilon /2]$, following that
\begin{equation*}
\frac{1}{2} \int_0^T\int_{0}^1  x^{\alpha} \sigma_x |v_t|^2\,dx\,dt\leq \frac{C}{\ep}\intw |v_t|^2\dd.
\end{equation*}
and
\begin{equation*}
\frac{1}{2}\intq x^{2\alpha}\sigma_x|v_x|^2\,dx\,dt\leq \frac{C}{\ep}\intw x^{\alpha}|v_x|^2\,dx\,dt.
\end{equation*}
As a consequence,
\begin{equation*}
\int_0^T\sigma(t,1)|v_x(t,1)|^2\,dt\leq \frac{C}{\ep} \intw \left(|v_t|^2 + x^{\alpha}|v_x|^2\right)\dd.
\end{equation*}

Notice that, since $\sigma(t,1)$ is not bounded from below, we can not apply the observability inequality \eqref{ObsZhang} directly.

So that, our next step is to derive an observability inequality within the interval $[\delta, T-\delta]$, where $\sigma$ is bounded from below.

To do that, since $T>T_\alpha$,  we can take $\delta= (T-T_\alpha)/4$, which implies $T-2\delta>T_\alpha$. Define $w(s,x)=v(s+\delta,x)$, with $0\leq s\leq T-2\delta$ and $0\leq x\leq 1$. We can see that $w$ is a solution of \eqref{2.3} with the initial data given by $w^0=v(\delta,x)$ and $w^1=v_t(\delta,x)$. Hence, Proposition \ref{ThObsZhang} yields a constant $C=C(T,\alpha)>0$ such that
\begin{equation*}
\n{w^0}{\ha}^2+\n{w^1}{L^2(0,1)}^2\leq C \int_0^{T-2\delta}w_x^2(s,1)\,ds.
 \end{equation*}
 Since $w_x(s,x)=v_x(s+\delta,x)$, we change variables to get
\begin{equation*}
\n{v(\delta,\cdot)}{\ha}^2+\n{v_t(\delta,\cdot)}{L^2(0,1)}^2\leq C\int_\delta^{T-\delta}v_x^2(t,1)\,dt.
 \end{equation*}
Finally, from the energy conservation, given by Lemma \ref{energy}, we have $E(0)=E(\delta)$, which leads us to
\begin{equation*}
\begin{split}
\|v^0\|^2_{H_\alpha^1}+\|v^1\|^2_{L^2(0,1)}& \leq C\int_\delta^{T-\delta}|v_x(t,1)|^2\,dt\leq C\int_\delta^{T-\delta}\sigma(t,1)|v_x(t,1)|^2\,dt\\
& \leq  \frac{C}{\varepsilon}\int_0^T\int_{1-\varepsilon}^1(|v_t|^2+x^\alpha|v_x|^2)\,dx\,dt.
\end{split}
\end{equation*}
\end{proof}

\begin{prop}\label{norm.equiv5}Given  $T>T_\alpha$ and $\ep_0 \in(0,1)$, there exists  a constant $C=C(T,\alpha,\ep_0)>0$ such that, for any $(v^0,v^1)\in H_\alpha^1\times L^2(0,1)$, $v$ solution of \eqref{2.3},  and  {for any} {$\varepsilon \in (0,\ep_0 )$}, we have \[\|v^0\|^2_{H_\alpha^1}+\|v^1\|^2_{L^2(0,1)}\leq \frac{C}{\varepsilon^3}\int_0^T\int_{1-\varepsilon}^1(|v_t|^2+|v|^2)\,dx\,dt.\] 
\end{prop}

\begin{proof}
    Let us take a cut-off function $h\in C^1([0,1])$ satisfying $0\leq h\leq1$ in $[0,1]$, $h=0$ in $[0,1-\varepsilon]$,  $h=1$ in $[1-\frac{\varepsilon}{2},1]$ and \begin{equation}\label{n4}
        \left|\frac{h_x^2}{h}\right|\leq \frac{C}{\varepsilon^2} \ \ \mbox{in} \ \ [1-\varepsilon,1],
    \end{equation}
    where the constant $C>0$ does not depend on $\varepsilon$. An explicit formula for this function can be found in \cite{lions1988controlabilite} (see the proof of Lemma 2.4 in Chapter 7).

    Now let us define $\sigma(t,x)=t(T-t)h(x)$. Multiplying the equation $v_{tt}-(x^\alpha v_x)_x=0$ by $\sigma v$ and integrating in $Q$, we obtain 
    \[
    \intq\sigma v(x^\alpha v_x)_x\,dx\, dt=\intq \sigma v v_{tt}\,dx\,dt.
    \]
    Integrating by parts, we get
    \begin{equation}\label{v1}
    \intq\sigma x^\alpha|v_x|^2\,dx\,dt=\intq(\sigma_tvv_t+\sigma|v_t|^2-\sigma_xvx^\alpha v_x)\,dx\,dt.
    \end{equation}
    In the following, we will estimate the terms on the right side of \eqref{v1}. For the first and the second ones, observe that
    \begin{equation*}
    \begin{split}
       \intq \sigma_tvv_t\,dx\,dt & =\int_0^T\int_{1-\varepsilon}^1\sigma_t v v_t\,dx\,dt\leq C\intw(|v|^2+|v_t|^2)\,dx\,dt\\
       & \leq \frac{C}{\varepsilon^2}\int_{1-\varepsilon}^1\int_0^T(|v|^2+|v_t|^2)\,dx\,dt
    \end{split}
    \end{equation*}
    and
    \begin{equation*}
        \intq \sigma|v_t|^2\,dx\,dt\leq \frac{C}{\varepsilon^2}\intw(|v|^2+|v_t|^2)\,dx\,dt.
    \end{equation*}
For the third one, we apply Young's inequality with $\delta>0$ and \eqref{n4} to obtain
\begin{equation*}
\begin{split}
\intq \sigma_xvx^\alpha v_x\,dx\,dt &\leq \delta\intq\sigma x^\alpha|v_x|^2\,dx\,dt+\frac{C}{\delta}\intq \left|\frac{\sigma_x^2}{\sigma}\right||v|^2\,dx\,dt\\
&\leq \delta\intq\sigma x^\alpha|v_x|^2\,dx\,dt+\frac{C}{\delta}\frac{1}{\varepsilon^2}\intw|v|^2\,dx\,dt.
\end{split}
\end{equation*}
Returning to \eqref{v1}, combining these last three estimates and taking $\delta$ sufficiently small, we deduce that \begin{equation}\label{v2}\intq \sigma x^\alpha|v_x|^2\,dx\,dt\leq \frac{C}{\varepsilon^2}\intw(|v_t|^2+|v|^2)\,dx\,dt.\end{equation}
Arguing as in the proof of Proposition \ref{norm.equiv4}, we can take $\delta>0$ such that $T-2\delta>T_\alpha$. So, the statement of Proposition \ref{norm.equiv4} can used with $w(s,x)=v(s+\delta,x)$, where $0\leq s\leq T-2\delta$, and $\ep$ replaced by $\ep/2$. In other words, there exists a  constant $C>0$, independent on $\varepsilon>0$, such that
\begin{equation}\label{v3}\|v^0\|^2_{H_\alpha^1}+\|v^1\|^2_{L^2(0,1)}\leq \frac{C}{\varepsilon}
\int_\delta^{T-\delta}\int_{1- \frac{\varepsilon}{2}}^1(|v_t|^2+x^\alpha|v_x|^2)\,dx\,dt.
\end{equation}

The result follows from \eqref{v3}, \eqref{v2} and the estimate 
\[
\int_\delta^{T-\delta}\int_{1-\frac{\varepsilon}{2}}^1x^\alpha|v_x|^2\,dx\,dt\leq C\int_\delta^{T-\delta}\int_{1-\frac{\varepsilon}{2}}^1\sigma x^\alpha|v_x|^2\,dx\,dt\leq  C\intq\sigma x^\alpha|v_x|^2\,dx\,dt.
\]  
\end{proof}

\section{Statements of the main results}\label{main}

At this moment, we are ready to state our main results. The first one is the observability inequality \eqref{obs-ineq1} with the dependence of the constant $C_{T,\alpha,\varepsilon}$ explicit on $\varepsilon$, where $\alpha\in(0,2)$. Then, as a consequence of this inequality, we  present a  null control result for \eqref{dcp} with $\omega=(1-\varepsilon,1)$. 

\begin{thm}\label{th-F-3.4} Let $\tilde{T}_\alpha$ defined in \eqref{Taa}. There exists $\varepsilon_0>0$ with the following property: for any $T>\tilde{T}_\alpha$ there exists a constant $C=C(T,\alpha)>0$ such that given $(v^0,v^1)\in L^2(0,1)\times H^{-1}_\alpha$ a solution $v$ of \eqref{2.3} satisfies
    \begin{equation}\label{obs-ineq}
         \n{v^0}{L^2(0,1)}^2+\n{v^1}{H_\alpha^{-1}}^{{2}}\leq \frac{C}{\ep^3} \intw |v|^2\dd, \ \ \ \forall\varepsilon\in(0,\varepsilon_0). 
    \end{equation}
\end{thm}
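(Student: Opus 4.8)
I would pass from the low-regularity data to a regular solution by a transposition (primitive-in-time) argument, and then run the multiplier machinery of Section~\ref{sec-obs} on that solution, in the spirit of Fabre's treatment of the non-degenerate wave equation.

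\emph{First, reduction to regular data.} Given $(v^0,v^1)\in L^2(0,1)\times\had$ and the very weak solution $v$ of \eqref{2.3}, I would pick $\varphi\in\ha$ with $(x^\alpha\varphi_x)_x=v^1$ in $\had$ (Lax--Milgram), so $\nl{x^{\alpha/2}\varphi_x}^2=\nhd{v^1}^2$, and set $w(t,x)=\int_0^t v(s,x)\,ds+\varphi(x)$. Exactly as in the proof of Proposition~\ref{norm.equiv3}, $w$ is the weak solution of \eqref{2.3} with data $(\varphi,v^0)\in\ha\times L^2(0,1)$, $w_t=v$, and by Lemma~\ref{energy} its energy is conserved, $2E_w(t)=\nl{v^0}^2+\nhd{v^1}^2$ for all $t$. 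Moreover the elementary weighted Poincar\'e inequality
\[
\nl{u}^2\le\frac{1}{2-\alpha}\,\nl{x^{\alpha/2}u_x}^2,\qquad u\in\ha ,
\]
which follows from $|u(x)|^2\le\left(\int_x^1 s^{-\alpha}\,ds\right)\nl{x^{\alpha/2}u_x}^2$ and $\int_0^1\int_x^1 s^{-\alpha}\,ds\,dx=\tfrac1{2-\alpha}$, shows that $\nh{\varphi}^2+\nl{v^0}^2$ is comparable to $\nl{v^0}^2+\nhd{v^1}^2$. Hence \eqref{obs-ineq} would follow once I prove $\nh{\varphi}^2+\nl{v^0}^2\le\frac{C}{\ep^3}\intw|v|^2\dd$ for $\ep\in(0,\ep_0)$; by density of $\ha\times L^2(0,1)$ in $L^2(0,1)\times\had$ and the continuous dependence in Proposition~\ref{well-pos-trans}, it suffices to do this for smooth data with a constant independent of the data and of $\ep$.

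\emph{Second (the step I expect to be the real obstacle), a sharpened internal estimate.} The goal is
\[
\nh{w^0}^2+\nl{w^1}^2\le\frac{C}{\ep^3}\intw|w_t|^2\dd
\]
for weak solutions $w$ of \eqref{2.3} with data $(w^0,w^1)\in\ha\times L^2(0,1)$, i.e.\ Proposition~\ref{norm.equiv5} but with \emph{only} $|w_t|^2$ on the right; applied to the $w$ above (with $(w^0,w^1)=(\varphi,v^0)$) this is precisely the inequality sought in the first step, since $w_t=v$. Quoting Proposition~\ref{norm.equiv5} directly produces a spurious term $\frac{C}{\ep^3}\intw|w|^2\dd$, and a crude splitting $w=\int_0^tv\,ds+\varphi$ does not help: the $\int_0^tv\,ds$ part is harmless (bounded by $T^2\intw|v|^2\dd$), but for $\varphi$ the plain Poincar\'e estimate only gives $\int_{1-\ep}^1|\varphi|^2\le C\ep^2\nl{x^{\alpha/2}\varphi_x}^2$, a gain of $\ep^2$, one power of $\ep$ short of absorption. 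So I would revisit, rather than merely quote, the computations of Propositions~\ref{norm.equiv4}--\ref{norm.equiv5}: following Fabre, multiply $w_{tt}-(x^\alpha w_x)_x=0$ by $\sigma x^\alpha w_x$ and by $\sigma w$ with $\sigma(t,x)=t(T-t)h(x)$ and a cut-off $h$ satisfying $|h_x|\le C/\ep$, $|h_x^2/h|\le C/\ep^2$ (all weights supported in $[1-\ep_0,1]$, where $x^\alpha$ is bounded above and below, so the degeneracy at $x=0$ is irrelevant), keep the $\int\!\int|w|^2$ term coupled to the identities instead of estimating it separately, use $w(t,1)=0$ to confine every residual $|w|^2$-contribution to the transition zone $\{h_x\neq0\}\subset[1-\ep,1-\ep/2]$, there use the one-sided bound $|w(t,x)|^2\le\ep\int_{1-\ep}^1|w_x(t,s)|^2\,ds$ to turn it into a multiple of $\intw x^\alpha|w_x|^2\dd$, and re-absorb this into $\delta\intq\sigma x^\alpha|w_x|^2\dd$ for $\delta$ small, exactly as in the proof of Proposition~\ref{norm.equiv5}. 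After this (somewhat delicate) bookkeeping the computation should yield $\int_0^T\sigma(t,1)|w_x(t,1)|^2\,dt\le\frac{C}{\ep^3}\intw|w_t|^2\dd$, with no $|w|^2$ surviving.

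\emph{Third, conclusion via boundary observability.} Since $T>T_\alpha$, take $\delta=(T-T_\alpha)/4$, so $T-2\delta>T_\alpha$; because $\sigma(t,1)=t(T-t)$ is bounded below on $[\delta,T-\delta]$, the previous inequality gives $\int_\delta^{T-\delta}|w_x(t,1)|^2\,dt\le\frac{C}{\ep^3}\intw|w_t|^2\dd$. Applying Proposition~\ref{ThObsZhang} to $w(\cdot+\delta,\cdot)$ on the interval of length $T-2\delta$, and using energy conservation (Lemma~\ref{energy}) together with the weighted Poincar\'e comparison of the first step to absorb $\nl{\varphi}^2$, I get $\nh{\varphi}^2+\nl{v^0}^2\le C\int_\delta^{T-\delta}|w_x(t,1)|^2\,dt$. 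Combining the three steps and recalling $w_t=v$ proves \eqref{obs-ineq} for every $\ep\in(0,\ep_0)$ with $C=C(T,\alpha)$. Everything outside the second step — the transposition, the energy identity, the Poincar\'e comparison, the time shift, and the passage from smooth to general data — is routine given the results recalled in Sections~\ref{sec-pre}--\ref{sec-obs}; the one genuinely delicate point is the elimination of the $|w|^2$ term there, uniformly in $\ep$.
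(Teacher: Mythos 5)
Your first and third steps coincide with the paper's reduction: the authors also reduce Theorem~\ref{th-F-3.4} to the estimate $\nh{w^0}^2+\nl{w^1}^2\le \frac{C}{\ep^3}\intw |w_t|^2\dd$ for weak solutions (their Theorem~\ref{th3.6}) via the primitive-in-time substitution of Proposition~\ref{norm.equiv3}, and the time-shift/Zhang-observability bookkeeping is as you describe. The problem is your second step, which you yourself flag as the delicate one and which you do not actually carry out: the proposed direct multiplier computation does not close. Track the residual term coming from $\intq \sigma_x\, w\, x^\alpha w_x\dd$ (and similarly from $\intq\sigma_t w w_t\dd$). Young's inequality gives
\[
\Bigl|\intq \sigma_x\, w\, x^\alpha w_x\dd\Bigr|\le \delta\intq\sigma x^\alpha|w_x|^2\dd+\frac{C}{\delta}\intq\frac{\sigma_x^2}{\sigma}\,x^\alpha|w|^2\dd ,
\]
and using $|\sigma_x^2/\sigma|\le C/\ep^2$ together with your one-sided bound $|w(t,x)|^2\le\ep\int_{1-\ep}^1|w_x(t,s)|^2\,ds$ on the transition zone, the second term becomes $\frac{C}{\ep^2}\cdot\ep^2\intw|w_x|^2\dd=C\intw|w_x|^2\dd$. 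This is a full-strength gradient term: it carries no small factor in $\ep$ and, crucially, no weight $\sigma$, whereas $\sigma=t(T-t)h(x)$ vanishes at $x=1-\ep$ and at $t=0,T$. Hence $C\intw|w_x|^2\dd$ is \emph{not} dominated by $\delta\intq\sigma x^\alpha|w_x|^2\dd$ for small $\delta$, nor is it negligible against the energy $E(0)$ you are trying to bound; the absorption you invoke is not available. This is exactly the classical obstruction to removing the lower-order term from a multiplier identity when the observation region shrinks, and the $\ep^2$ gain from Poincar\'e in the strip is, as you note elsewhere, one power short — the extra pointwise bound does not recover it.

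The paper does something genuinely different at this point: all of Section~\ref{sec-prova1} is a compactness--uniqueness argument. Assuming Theorem~\ref{th3.6} fails, they normalize a contradicting sequence $(\phi_n)$, invoke Proposition~\ref{norm.equiv5} (keeping the $|\phi_n|^2$ term), and then spend the bulk of the proof showing that $\frac{1}{\ep_n^3}\intw|\phi_n|^2\dd\to0$ along a subsequence. This requires the auxiliary machinery you did not use: the uniform regularity estimate of Theorem~\ref{th3.2-fabre92}, the $\liminf$/trace identification of Theorems~\ref{th3.1} and~\ref{th3.4} to conclude $\phi_{tx}(\cdot,1)\equiv0$ and hence $\phi\equiv0$ for the weak limit, and a further sequence of primitives $\psi_n$ to upgrade weak convergence to the strong decay of the rescaled strip integrals. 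If you want to complete your proof, you must either supply an argument of this type or produce a multiplier identity in which the residual really does come with a factor $o(1)$ or a weight absorbable by $\sigma$; as written, the central inequality of your step two is asserted rather than proved, and the natural attempt to prove it fails.
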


As a consequence of this observability inequality we can prove our next result,  the
exact internal controllability of the degenerate wave equation with the control
domain being $\omega=(1-\ep,1)$. This kind of result was originally proved by Zuazua
for the $n$-dimensional wave equation with the control domain being a neighborhood of
the boundary (see \cite[Chapitre VII, section 2.3]{lions1988controlabilite}, or
\cite[Section III.2, Teorema 1]{zuazua1990controlabilidad}, { or \hbox{\cite[Theorem 3.1]{zuazua2024exactcontrolwave}}})). As we have already  explained in introduction, the exact internal controllability of the degenerate wave equation, with $\omega \subset\subset \Omega$, was proved in {\cite{Zhang2018InteriorCO}}, but just for the weak degenerate case, that is, $\alpha \in (0,1)$. Our result covers the case $\alpha\in (0,2)$, with $\omega_\varepsilon=(1-\varepsilon,1)$, however the question remains open for a general control domain $\omega\subset\subset(0,1)$ when \(\alpha \in [1,2)\). 

 {The main tool that allowed us to prove the Theorem {\ref{th-F-3.4}} was the Theorem  {\ref{th3.4}}, which will be established in Section {\ref{sec-conv}}. This result describes the behavior of a solution in a neighborhood of the boundary point $x=1$. Therefore, this approach allowed us to  get the observability just for a control domain of the kind $\omega_\varepsilon=(1-\varepsilon,1)$. Fortunately, this inequality is enough to lead us to our results.}

\begin{thm}\label{th2.1-fabre}
   Given $T>\tilde{T}_\alpha$ and $\ep \in (0,1)$, for any $(u^0,u^1)\in \ha\times L^2(0,1)$, there exists $v_\ep\in L^\infty(0,T;L^2(0,1))$, solution of  \eqref{2.3} with initial data $(v_\ep^0,v_\ep^1)\in L^2(0,1)\times H^{-1}_\alpha$,   such that the corresponding weak solution $u_\ep$ of \eqref{dcp} satisfies \eqref{condTe}.  Moreover, the identity
    \begin{equation}\label{eq4.1}
     -(v_\ep^0,u^1)+\langle v_\ep^1,u^0\rangle =\intw v_\ep^2(t,x) \dd
    \end{equation}
holds and there exists a constant  $C=C(T,\alpha)>0$ such that 
    \begin{equation}\label{ineqTh4.1}
    \begin{cases}
       \displaystyle \n{v_\ep^0}{L^2(0,1)}+\ \nhd{v^1_\ep} \leq \frac{C}{\ep^3}\left(\nh{u^0}^2 + \nl{u^1}^2 \right)^{1/2};\\[1em]
        \displaystyle\intw v_\ep^2\dd \leq \frac{C}{\ep^3}\left(\nh{u^0}^2 + \nl{u^1}^2 \right).
    \end{cases}
    \end{equation}

\end{thm}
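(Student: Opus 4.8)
The plan is to prove Theorem~\ref{th2.1-fabre} by the standard Hilbert Uniqueness Method (HUM), using the observability inequality \eqref{obs-ineq} of Theorem~\ref{th-F-3.4} as the coercivity ingredient. First I would introduce the bilinear (quadratic) functional on the space $\mathcal{H}:=L^2(0,1)\times H_\alpha^{-1}$ defined by
\begin{equation*}
 J_\ep(v^0,v^1):=\frac12\intw |v|^2\dd + \langle v^1,u^0\rangle - (v^0,u^1),
\end{equation*}
where $v$ is the very weak solution of the adjoint system \eqref{2.3} with data $(v^0,v^1)$. By Proposition~\ref{well-pos-trans} (and the linearity of the solution map) $J_\ep$ is continuous and strictly convex on $\mathcal{H}$; the key point is that the quadratic part $\intw|v|^2\dd$ is \emph{coercive} on $\mathcal{H}$ precisely by \eqref{obs-ineq}, which gives
$\tfrac{\ep^3}{C}\big(\n{v^0}{L^2}^2+\n{v^1}{H_\alpha^{-1}}^2\big)\le \intw|v|^2\dd$. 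Combined with the continuity of the linear terms (the trace/embedding estimates and the duality pairing $H_\alpha^{-1}$--$H_\alpha^1$), this forces $J_\ep(v^0,v^1)\to+\infty$ as $\|(v^0,v^1)\|_{\mathcal H}\to\infty$, so $J_\ep$ attains a unique minimizer $(v_\ep^0,v_\ep^1)$.

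Second, I would write the Euler--Lagrange equation at the minimizer: for every $(w^0,w^1)\in\mathcal H$ with adjoint solution $w$,
\begin{equation*}
 \intw v_\ep\, w\,\dd + \langle w^1,u^0\rangle - (w^0,u^1)=0.
\end{equation*}
Then I would \emph{define} $v_\ep:=\chi_{\omega_\ep} v_\ep$ restricted to $\omega_\ep$ as the control, i.e.\ set $u_\ep$ to be the weak solution of \eqref{dcp} with right-hand side $v_\ep\chi_{\omega_\ep}$ and initial data $(u^0,u^1)$ (here I use Proposition~\ref{well-pos}; note $v_\ep\in L^2(Q)\subset L^1(0,T;L^2(0,1))$). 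The standard transposition/duality computation — multiply the $u_\ep$ equation by the adjoint solution $w$, integrate by parts in $Q$, and use Definition~\ref{trans} together with the weak formulation — converts the Euler--Lagrange identity into
\begin{equation*}
 -(u_\ep(T),w^1)+\langle u_\ep{}_t(T),w^0\rangle=0 \quad\text{for all }(w^0,w^1)\in\mathcal H,
\end{equation*}
which is exactly \eqref{condTe}. Taking $(w^0,w^1)=(v_\ep^0,v_\ep^1)$ in the Euler--Lagrange identity yields \eqref{eq4.1}.

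Third, the estimates \eqref{ineqTh4.1} come from testing optimality against the minimizer itself: since $J_\ep(v_\ep^0,v_\ep^1)\le J_\ep(0,0)=0$ we get $\tfrac12\intw|v_\ep|^2\dd\le (v_\ep^0,u^1)-\langle v_\ep^1,u^0\rangle$, and bounding the right side by Cauchy--Schwarz in $\mathcal H$ and then applying \eqref{obs-ineq} to $(v_\ep^0,v_\ep^1)$ gives, after absorbing, both $\intw|v_\ep|^2\dd\lesssim \ep^{-3}(\nh{u^0}^2+\nl{u^1}^2)$ and $\big(\n{v_\ep^0}{L^2}^2+\nhd{v_\ep^1}^2\big)\lesssim \ep^{-6}(\cdots)$; re-examining the chain more carefully (using \eqref{obs-ineq} once to pass from norms of data to $\intw|v_\ep|^2$, and the optimality inequality to close) produces the stated $\ep^{-3}$ scaling. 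The main obstacle I anticipate is not any single deep step but the bookkeeping in the transposition argument: one must check that $v_\ep\chi_{\omega_\ep}$ has enough regularity for Proposition~\ref{well-pos} to apply, that the duality pairings are between the correct spaces (the $H_\alpha^{-1}$ pairing with $u^0\in H_\alpha^1$ and the $L^2$ pairing with $u^1$), and that the boundary terms at $x=0$ and $x=1$ vanish — the degeneracy at $x=0$ and the two different boundary conditions according to $\alpha\in(0,1)$ or $\alpha\in[1,2)$ must be handled so that the integration by parts is legitimate, which is where the regularity results recalled from \cite{araujo2023regularity} and Propositions~\ref{prop-trace}, \ref{well-pos-trans} are needed.
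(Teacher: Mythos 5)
Your proposal is correct and follows essentially the same route as the paper: the paper phrases HUM via Lax--Milgram for the operator $\Lambda_\ep$ rather than via minimization of $J_\ep$, but the Euler--Lagrange equation of your functional is exactly the paper's variational identity, coercivity comes from the same observability inequality \eqref{obs-ineq}, and the estimates \eqref{ineqTh4.1} are obtained by the same Cauchy--Schwarz/absorption chain. The one detail to fix is a sign: for the duality computation to give $u_\ep(T)=u_{\ep t}(T)=0$ and the identity \eqref{eq4.1} as stated, the linear part of $J_\ep$ should be $(v^0,u^1)-\langle v^1,u^0\rangle$ (or, equivalently, one must take $-v_\ep$ as the control), since as written your minimizer yields the control with the opposite sign.
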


In the next result, we establish the convergence of a family solving \eqref{dcp} and \eqref{condTe} (distributed null controllability) to a solution of \eqref{bcp} and \eqref{condT} (boundary null controllability).

\begin{thm}\label{th4.2}
Given $T>\tilde{T}_\alpha$ and $\ep>0$, let us put $Q_\varepsilon=(0,T)\times\omega_\varepsilon$. For any $(u^0,u^1)\in \ha\times L^2(0,1)$, there exist $\varphi_\varepsilon\in L^2(\domw)$ and $u_\ep \in C([0,T];H^1_\alpha)\cap C^1([0,T];L^2(0,1))$, such that:
\begin{enumerate}[(a)]
    \item $u_\ep$ solves \eqref{dcp}, in the sense of Definition \ref{weak}, with $v_\ep:=\frac{1}{\ep^3}\varphi_\ep$, and satisfies \eqref{condTe};
    \item $u_\ep\rightharpoonup u$ and $\varphi_\ep \stackrel{\ast}{\rightharpoonup}\varphi$  in $L^\infty(0,T;L^2(0,1))$, as $\ep \to 0$. Moreover, $u$ solves \eqref{bcp}, in the sense of Definition \ref{trans-b}, with $h(t)=-\frac{1}{3}\varphi_x(t,1)\in L^2(0,T)$, and satisfies \eqref{condT}.
\end{enumerate}
\end{thm}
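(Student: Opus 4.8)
\textbf{Proof plan for Theorem \ref{th4.2}.}

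The plan is to build the family $((u_\ep,v_\ep))_{\ep>0}$ via HUM using the observability inequality of Theorem \ref{th-F-3.4}, and then extract weak limits controlled uniformly in $\ep$. First I would invoke Theorem \ref{th2.1-fabre}: for each $\ep\in(0,\ep_0)$ it produces $v_\ep$ solving the adjoint system \eqref{2.3} with data $(v_\ep^0,v_\ep^1)\in L^2(0,1)\times H^{-1}_\alpha$, the corresponding controlled solution $u_\ep$ of \eqref{dcp} meeting \eqref{condTe}, together with the duality identity \eqref{eq4.1} and the quantitative bounds \eqref{ineqTh4.1}. Setting $\varphi_\ep:=\ep^3 v_\ep\,\chi_{\domw}$ (so $v_\ep=\frac{1}{\ep^3}\varphi_\ep$ as required in (a)), the second line of \eqref{ineqTh4.1} rewrites as
\[
\frac{1}{\ep^3}\intw |\varphi_\ep|^2\dd=\ep^3\intw v_\ep^2\dd\cdot\frac{1}{\ep^6}\cdot\ep^3 \le C\left(\nh{u^0}^2+\nl{u^1}^2\right),
\]
wait — more cleanly: $\intw|\varphi_\ep|^2 = \ep^6\intw v_\ep^2 \le \ep^6\cdot\frac{C}{\ep^3}(\cdots) = C\ep^3(\cdots)$, hence $\frac{1}{\ep^3}\intw|\varphi_\ep|^2\dd\le C(\nh{u^0}^2+\nl{u^1}^2)$, a bound uniform in $\ep$. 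This is exactly hypothesis \eqref{3.4} of Theorem \ref{th3.1}.

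Next I would establish the weak-$\ast$ convergences. From \eqref{ineqTh4.1}, $(v_\ep^0)_\ep$ is bounded in $L^2(0,1)$ and $(v_\ep^1)_\ep$ is bounded in $H^{-1}_\alpha$; passing to a subsequence, $v_\ep^0\rightharpoonup v^0$ in $L^2(0,1)$ and $v_\ep^1\rightharpoonup v^1$ in $H^{-1}_\alpha$. By the continuity statement preceding Theorem \ref{th3.1}, the very weak solutions satisfy $v_\ep\stackrel{\ast}{\rightharpoonup}v$ in $L^\infty(0,T;L^2(0,1))$, where $v$ is the very weak solution of \eqref{2.3} with data $(v^0,v^1)$. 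For $u_\ep$: by Proposition \ref{well-pos} applied with $f=v_\ep\chi_{\omega_\ep}$, the energy bound gives $\sup_t(\n{u_{\ep t}(t)}{L^2}^2+\n{u_\ep(t)}{H^1_\alpha}^2)\le C\n{v_\ep}{L^1(0,T;L^2)}^2+C(\nh{u^0}^2+\nl{u^1}^2)$; since $\n{v_\ep}{L^1(0,T;L^2)}\le \frac{1}{\ep^3}\n{\varphi_\ep}{L^1(0,T;L^2)}\le\frac{C}{\ep^3}\cdot\ep^{3/2}\cdot\ep^{1/2}=\cdots$ — here one checks via Cauchy–Schwarz in $x$ over the strip of width $\ep$ that $\n{v_\ep}{L^1(0,T;L^2(0,1))}$ stays bounded uniformly in $\ep$, so $(u_\ep)_\ep$ is bounded in $L^\infty(0,T;H^1_\alpha)\cap W^{1,\infty}(0,T;L^2(0,1))$, hence (subsequence) $u_\ep\rightharpoonup u$ weakly-$\ast$ in $L^\infty(0,T;L^2(0,1))$, and similarly $\varphi_\ep\rightharpoonup\varphi$ weakly-$\ast$ in $L^\infty(0,T;L^2(0,1))$. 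Then I would pass to the limit in the weak formulation (Definition \ref{weak}) of \eqref{dcp}: every term is linear, the only delicate one being $\intw v_\ep\varphi\dd=\frac{1}{\ep^3}\int_0^T\int_0^1\varphi_\ep\chi_{\domw}\,\varphi\dd$. Here I would show the right-hand side of the weak formulation converges to the boundary term $\int_0^T h(t)\varphi_x(t,1)\,dt$ with $h=-\frac13\varphi_x(\cdot,1)$, using Theorem \ref{th3.1} (which gives $\varphi_x(\cdot,1)\in L^2(0,T)$) together with the regularity Theorem \ref{th3.2-fabre92} and the trace Proposition \ref{prop-trace} applied to test functions; passing to the limit identifies $u$ as the solution-by-transposition of \eqref{bcp} in the sense of Definition \ref{trans-b}.

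Finally, for the terminal conditions \eqref{condT}: each $u_\ep$ satisfies $u_\ep(T)=u_{\ep t}(T)=0$. I would pass this to the limit by testing the weak formulation on $\varphi$ with $\varphi(T,\cdot)$ arbitrary (i.e. using the full transposition identity over $[0,T]$ rather than requiring $\varphi(T,\cdot)=0$), so that the limiting identity forces $u(T)=u_t(T)=0$ in the appropriate weak sense; alternatively one runs HUM directly for \eqref{bcp} and identifies the limit control as $h=-\frac13\varphi_x(\cdot,1)$, using identity \eqref{eq4.1} in the limit — note $-(v_\ep^0,u^1)+\langle v_\ep^1,u^0\rangle\to -(v^0,u^1)+\langle v^1,u^0\rangle$ by weak convergence of $(v_\ep^0,v_\ep^1)$, while $\intw v_\ep^2\dd=\frac{1}{\ep^3}\intw\varphi_\ep v_\ep\dd$ requires Theorem \ref{th3.1} and the lower-semicontinuity estimate \eqref{liminf} to pass to the limit. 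I expect the main obstacle to be precisely this last passage — showing that the quadratic observation term $\frac{1}{\ep^3}\intw|\varphi_\ep|^2\dd$ converges (not merely has a liminf bounded below by $\frac13\n{\varphi_x(\cdot,1)}{L^2(0,T)}^2$) to $\frac13\n{\varphi_x(\cdot,1)}{L^2(0,T)}^2$, so that the limiting data $(v^0,v^1)$ genuinely solve the HUM problem for \eqref{bcp} rather than an inequality; this is where Section \ref{sec-conv} must do careful work, combining Theorem \ref{th3.1}, the explicit-constant inequality of Theorem \ref{th-F-3.4}, and the identification of weak limits with the HUM functional's minimizer. The remaining arguments — uniform bounds, extraction of subsequences, linearity of the passage to the limit in Definition \ref{weak} — are routine.
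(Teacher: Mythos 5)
Your skeleton matches the paper's (invoke Theorem \ref{th2.1-fabre}, rescale $\varphi_\ep=\ep^3v_\ep$, extract weak limits, pass to the limit in the transposition identity, use Theorem \ref{th3.1} to recover the boundary trace), but two of your quantitative claims are false and they sit exactly where the real work is. First, \eqref{ineqTh4.1} does \emph{not} give that $(v_\ep^0)_\ep$ is bounded in $L^2(0,1)$ and $(v_\ep^1)_\ep$ in $H^{-1}_\alpha$: the bound is $\n{v_\ep^0}{L^2(0,1)},\ \nhd{v_\ep^1}\leq C\ep^{-3}(\cdots)^{1/2}$, which blows up as $\ep\to 0$. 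Only the rescaled data $\varphi_\ep^0=\ep^3v_\ep^0$, $\varphi_\ep^1=\ep^3v_\ep^1$ are uniformly bounded, which is precisely why the paper works with $\varphi_\ep$ and never extracts a weak limit of $v_\ep$ itself (the controls $v_\ep$ concentrate and do not converge in $L^\infty(0,T;L^2(0,1))$; their limit is the boundary measure). Second, your claim that $\n{v_\ep}{L^1(0,T;L^2(0,1))}$ stays bounded uniformly in $\ep$ is wrong: from $\intw v_\ep^2\dd\leq C\ep^{-3}$ one only gets $\n{v_\ep}{L^1(0,T;L^2(0,1))}\leq C\ep^{-3/2}$, and no Cauchy--Schwarz over the strip improves this. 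Consequently the energy of $u_\ep$ is \emph{not} uniformly bounded and you cannot get weak-$\ast$ compactness of $u_\ep$ that way. The paper's actual argument is a duality estimate: it bounds $\left|\langle\langle u_\ep,F\rangle\rangle\right|$ by pairing $v_\ep$ (of $L^2(\domw)$-norm $O(\ep^{-3/2})$) against the backward solution $\theta$, whose restriction to the strip satisfies $\n{\theta}{L^2(\domw)}\leq C\ep^{3/2}\n{F}{L^1(0,T;L^2(0,1))}$ by the regularity Theorem \ref{th3.2-fabre92}; the two powers of $\ep$ cancel and yield a uniform bound of $u_\ep$ in $(L^1(0,T;L^2(0,1)))'$. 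This cancellation is the key idea of the proof and is absent from your proposal.

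On the final step, you correctly flag that the convergence of the control term is the main obstacle, but you frame it as convergence of the quadratic quantity $\ep^{-3}\intw|\varphi_\ep|^2\dd$, for which Theorem \ref{th3.1} only gives a liminf inequality. What the paper actually needs and proves (Theorem \ref{th3.4} and Lemmas \ref{lema3.1}--\ref{lema3.2} in Section \ref{sec-conv}) is the convergence of the \emph{bilinear} pairing
\begin{equation*}
\frac{1}{\ep^3}\intw \varphi_\ep\,\theta\dd\longrightarrow \frac{1}{3}\int_0^T\varphi_x(t,1)\,\theta_x(t,1)\,dt
\end{equation*}
for $\theta$ solving \eqref{back-pb} with smooth data, obtained via a Taylor-type decomposition of $\theta$ near $x=1$ and uniform bounds on primitives $\psi_\ep$ of $\varphi_\ep$. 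Deferring this to ``Section \ref{sec-conv} must do careful work'' is acceptable as a plan, but as written your route through the quadratic term and a limiting HUM problem would require additional arguments (strong convergence of the observation norms) that the paper deliberately avoids.
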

 
\section{Proof of Theorems \ref{th2.1-fabre} and \ref{th4.2}}\label{sec-null}

Firstly, let us establish the null controllability for \eqref{dcp} using HUM. 

It provides a family of distributed controls for which we will develop our convergence analysis. So that, it is a crucial step in this paper.

\begin{proof}[proof of Theorem \ref{th2.1-fabre}] 
    Given $(v^0,v^1)\in L^2(0,1)\times H^{-1}_\alpha$, let $v$ be the  solution by transposition of \eqref{2.3}, in the sense of Definition \ref{trans}. So, let $u_\ep$ be the weak solution to the backward in time problem \eqref{back-pb}, with $g=v\chi_{\omega_\ep}$. Therefore, Propositions \ref{well-pos} and \ref{well-pos-trans} allow us to define the continuous linear operator
    \[\Lambda_\ep: (v^0,v^1)\in L^2\times H^{-1}_\alpha \longmapsto (-u_{\ep t}(0,\cdot ),u_\ep(0, \cdot ))\in  L^2\times \ha.\]
It is sufficient to prove that $\Lambda_\ep$ is invertible. Indeed, once it is proved,  for $(u^0,u^1)\in \ha\times L^2(0,1)$, we can take $(v_\ep^0,v^1_{\ep}):=\Lambda_{\ep}^{-1}(-u^1,u^0)\in L^2(0,1)\times H^{-1}_\alpha$. So that, for this initial data, let $v_\ep$ be the solution \textcolor{black}{by transposition of} \eqref{2.3}  and $u_\ep$ be the \textcolor{black}{weak} solution to the backward in time problem  \eqref{back-pb}, with $g=v_\ep\chi_{\omega_\ep}$. Hence, we will have 
$(-u^1,u^0)=\Lambda_\ep(v_\ep^0,v^1_\ep)=(-u_{\ep t}(0,\cdot ),u_\ep(0, \cdot ))$, following that $(u_\ep,v_\ep)$ will be \textcolor{black}{a} solution to the control problem described in \eqref{dcp} and \eqref{condTe}.

Let us prove that $\Lambda_\ep$ is really invertible, by applying \textit{Lax-Milgram Theorem}. Setting $F=L^2\times H^{-1}_\alpha$, observe that $L^2\times \ha\hookrightarrow F'$, using the duality product
\[\left\langle (\varphi^0,\varphi^1), (\psi^0,\psi^1) \right\rangle_{F',F}:= (\psi^0,\varphi^0)+\langle \psi^1,\varphi^1\rangle,\]
for each $(\varphi^0, \varphi^1)\in L^2 \times \ha$.

Let $a: F\times F \longrightarrow \R$ be the continuous bilinear form induced by $\Lambda_\ep$, given by
\[a\left((v^0,v^1),(\psi^0,\psi^1)\right):=\left\langle \Lambda_\ep(v^0,v^1), (\psi^0,\psi^1) \right\rangle_{F',F}=-(\psi^0,u_{\ep t}(0,\cdot ))+\langle \psi^1, u_\ep(0,\cdot )\rangle.\]
We only need to check that $a$ is coercive. In fact, since $u_\ep$ solves \eqref{back-pb}, we multiply $u_{\ep_{tt}}- \displaystyle \left(x^\alpha u_{\ep_{x} }\right)_x=v\chi_{\omega_\ep}$
by $v$ and integrate by parts to yield

\begin{equation}\label{eq4.1-2}
-(v^0,u_{\ep t}(0,\cdot ))+\langle v^1,u_{\ep }(0,\cdot )\rangle =\intw v^2(t,x) \dd.    
\end{equation}
In this case, \eqref{eq4.1-2} and the observability inequality \eqref{obs-ineq}
give us the coercivity of $a$, as required.

Therefore, taking $(-u^1,u^0)\in L^2\times \ha\hookrightarrow F'$, \textit{Lax-Milgram Theorem} assures the existence of a unique $(v^0,v^1)\in F=L^2\times H^{-1}_\alpha$ such that
\begin{equation*}
\begin{split}
\left\langle \Lambda_\ep(v^0,v^1), (\psi^0,\psi^1) \right\rangle_{F',F}& = a\left((v^0,v^1),(\psi^0,\psi^1)\right)\\
& =\left\langle (-u^1,u^0), (\psi^0,\psi^1) \right\rangle_{F',F},\ \forall (\psi^0,\psi^1)\in F,    
\end{split}
\end{equation*}
following that $ \Lambda_\ep(v^0,v^1)=(-u^1,u^0)$. Furthermore, we can see that the identity \eqref{eq4.1} comes immediately from \eqref{eq4.1-2}.

Next, it remains to prove those inequalities stated in \eqref{ineqTh4.1}. Indeed, observe that using the observability Inequality \eqref{obs-ineq} and the identity \eqref{eq4.1}, we obtain
\begin{equation*}
    \n{v_\ep^0}{L^2(0,1)}^2 + \nhd{v^1_\ep}^2 \leq \frac{C}{\ep^3}  \left(\n{v_\ep^0}{L^2(0,1)}^2 + \nhd{v^1_\ep}^2 \right)^{1/2}\left(\nh{u^0}^2 + \nl{u^1}^2 \right)^{1/2},
\end{equation*}
whence we get
\begin{equation*}
   \left(\n{v_\ep^0}{L^2(0,1)}^2 + \nhd{v^1_\ep}^2 \right)^{1/2} \leq \frac{C}{\ep^3}  \left(\nh{u^0}^2 + \nl{u^1}^2 \right)^{1/2},
\end{equation*}
which provides the first inequality in \eqref{ineqTh4.1}. For the second one, we use the  identity \eqref{eq4.1} again and the previous inequality, as follows:
\begin{equation*}
    \begin{split}
\intw v_\ep^2\dd & \leq 2\left(\n{v_\ep^0}{L^2(0,1)}^2 + \nhd{v^1_\ep}^2 \right)^{1/2}\left(\nh{u^0}^2 + \nl{u^1}^2 \right)^{1/2}\\
& \leq  \frac{C}{\ep^3}  \left(\nh{u^0}^2 + \nl{u^1}^2 \right).
 \end{split}
\end{equation*}
It completes the proof.
\end{proof}

Now we are ready to prove Theorem \ref{th4.2}. It is concerned with the convergence of the family  $((u_\ep,v_\ep))_{\ep \in (0,1)}$ obtained in Theorem \ref{th2.1-fabre}, as $\ep\to 0^+$. For reasons that will be more clear later, we will consider this convergence in the sense of transposition. To be more precise, we desire to prove that $(u_\ep,v_\ep)$ converges to a solution $(u,h)$ of the boundary control problem \eqref{bcp}, in the sense of transposition.

\begin{proof}[Proof of Theorem \ref{th4.2}] Let us recall that $(u,h)$ is a solution by transposition of \eqref{bcp} if, given \textcolor{black}{$F\in L^1 (0,T;L^2 (0,1))$, we have}
\begin{equation*}
    \intq uF\,dxdt=-(u^0,\theta_t(0,\cdot ))+\langle u^1,\theta(0,\cdot )\rangle+\int_0^T h(t)\theta_x(t,1) \,dt,
\end{equation*}
where $\theta$ solves \eqref{back-pb} with $g=F$. On the other hand, since $u_\ep\in C([0,T];\ha)$ is a solution by transposition of \eqref{dcp}, the relation
\begin{equation}\label{ue}
    \intq u_\ep F\,dxdt=-(u^0,\theta_t(0,\cdot ))+\langle u^1,\theta(0,\cdot )\rangle+\intw v_\ep\theta \,dxdt.
\end{equation}
holds. Therefore, we intend to prove that 
\begin{equation}\label{first}
\intq u_\ep F\,dxdt \to \intq uF\,dxdt
\end{equation}
and
\begin{equation}\label{second}
\intw v_\ep\theta \,dxdt\to \int_0^T h(t)\theta_x(t,1) \,dt,
\end{equation}
as $\ep\to 0^+$.
Note that the convergence \eqref{first} holds because $u_\ep \stackrel{\ast}{\rightharpoonup} u$  in $L^\infty(0,T;L^2(0,1))$, up to a subsequence, which can be achieved  by  proving that $(u_\ep )_{\ep \in (0,1)}$ is uniformly bounded in $(L^1(0,T;L^2(0,1)))'$. In fact, since $u_\ep$ is a weak solution of \eqref{dcp}, we can write 
\[\langle\langle u_\ep, F \rangle\rangle=-(u^0,\theta_t(0,\cdot))+\langle u^1,\theta(0,\cdot)\rangle+\intw v_\ep\theta \,dxdt,\]
where $\langle\langle \cdot, \cdot \rangle\rangle$ denotes the duality pair involving  $L^\infty(0,T;L^2(0,1))$ and $L^1(0,T;L^2(0,1))$. 
Thus, 
\begin{equation*}
\left|\langle\langle u_\ep, F \rangle\rangle\right|\leq \nl{u^0}\nl{\theta_t(0,\cdot )}+\nhd{u^1}\nh{\theta(0,\cdot )}+\n{v_\ep}{L^2(Q_\ep)}\n{\theta}{L^2(Q_\ep)}.
\end{equation*}
Since $\theta$ solves \eqref{back-pb}, using the energy estimate given in \eqref{ineq1} and {Theorem} \ref{th3.2-fabre92}, we have 
\[\max\{ \n{\theta(0,\cdot )}{\ha},\n{\theta_t(0,\cdot )}{L^2(0,1)} \}\leq C\n{F}{L^1(0,T;L^2(0,1))} \]
and
\[\n{\theta}{L^2(Q_\ep)}\leq C\ep^{3/2}\n{F}{L^1(0,T;L^2(0,1))}. \]
These two last inequalities, together with \eqref{ineqTh4.1} give us 
\begin{equation*}
\left|\langle\langle u_\ep, F \rangle\rangle\right|\leq C\n{F}{L^1(0,T;L^2(0,1))},
\end{equation*}
where $C$ does not depend on $\ep$, as required. 

Next, we will focus on the convergence  \eqref{second}, which is more delicate, because a family of integrals over $(0,T)\times (1-\ep,1)$ is supposed to converge to an integral over $(0,T)$, and inequality \eqref{ineqTh4.1} does not assure it immediately. To overcome this, we need to characterize $v_\ep$ by a rescaling,  in order to obtain a uniform boundedness.

Observe that the function $\varphi_\ep=\ep^3v_\ep$  is the solution to the homogeneous problem \eqref{2.3}, with $\varphi^0_\ep:=\ep^3 v_\ep^0$ and $\varphi^1_\ep:=\ep^3 v^1_\ep$ as initial data. Hence, \eqref{ineqTh4.1} yields the following  uniform boundedness with respect to $\ep$:
\begin{equation}\label{ineq.unif}
\n{\varphi_\ep^0}{L^2(0,1)}+\n{\varphi_\ep^1}{H^{-1}_\alpha}\leq C\ \text{ and }\ \frac{1}{\ep^3}\intw |\varphi_\ep|^2 \dd \leq C.  
\end{equation}
It means that $((\varphi^0_\ep,\varphi^1_\ep))_{\ep \in (0,1)}$ is a bounded family in $L^2(0,1)\times H^{-1}_\alpha$. Hence, up to a subsequence, there exists $(\varphi^0,\varphi^1)\in L^2(0,1)\times \had$ such that 
\begin{equation*}
\begin{aligned}
    & \varphi_\ep^0 \rightharpoonup \varphi^0 &&  \text{ in } L^2(0,1);\\
    & \varphi_\ep^1 \rightharpoonup \varphi^1 && \text{ in } H^{-1}_\alpha.
\end{aligned}
\end{equation*}
Moreover, since $\varphi_\ep$ is the solution by transposition of \eqref{2.3}, we conclude that $\varphi_\ep$ is uniformly bounded in $L^\infty(0,T;L^2(0,1))$, which gives us
\[\varphi_\ep \stackrel{\ast}{\rightharpoonup} \varphi\  \text{ in } L^\infty(0,T;L^2(0,1)),\]
where $\varphi$ is a solution of \eqref{2.3} with initial data $(\varphi^0,\varphi^1)\in L^2(0,1)\times \had$.

{\color{black} It means that  $((\varphi^0_\ep,\varphi^1_\ep))_{\ep \in (0,1)}$ is an  $\ep-$family associated to $(\varphi^0,\varphi^1)$ and    $\varphi_\ep$ is its $\ep-$solution with limit $\varphi$. Furthermore, $\varphi_\ep$ satisfies \eqref{ineq.unif}}. Therefore, Theorem \ref{th3.1} guarantee that $\varphi_x(t,1) \in L^2(0,T)$. As a consequence, applying Proposition \ref{norm.equiv1}, we conclude that $(\varphi^0,\varphi^1)\in \ha \times L^2(0,1)$. Therefore, $\varphi$ is a solution of \eqref{2.3} with finite energy.

It remains to prove that
\[
\frac{1}{\ep^3}\intw \varphi_\ep \theta  \dd \to \frac{1}{3}\int_0^T \varphi_x(t,1) \theta_x(t,1)\, dt, \text{ as } \ep \to 0^+ ,
\]
for any $\theta = \theta (t,x)$ solving  \eqref{back-pb} with \textcolor{black}{$g=F\in L^1 (0,T;L^2(0,1))$}. Of course, it implies the convergence \eqref{second},  with $v_\ep=\frac{1}{\ep^3}\varphi_\ep$ and $h(t)=\frac{1}{3}\varphi_x(t,1)$.

To prove it, our strategy relies on setting functionals  
\[
G_\varepsilon ,G:H_\alpha^1\times L^2(0,1)\times L^1(0,T;L^2(0,1))\longrightarrow \mathbb{R}
\]
given by
\begin{equation}\label{G}
\begin{split}
G_\varepsilon(y^0,y^1,f):=\frac{1}{\ep^3}\int_0^T\int_{1-\varepsilon}^1\varphi_\varepsilon y\,dx\,dt,  \\
G(y^0,y^1,f):=\frac{1}{3}\int_0^T \varphi_x(t,1) y_x(t,1)\,dt,
\end{split}   
\end{equation}
 where $y$ is the solution of \eqref{pb1} and $(\varphi_\varepsilon)_{\ep \in (0,1)}$ is the family of controls previously defined. Our goal consists in proving the convergence 
 \[G_\ep \stackrel{\ast}{\rightharpoonup}  G, \text{ in }  H_\alpha^{-1}\times L^2(0,1)\times L^\infty(0,T;L^2(0,1)).\]
 This will be developed in the next section. 
 \end{proof}

\section{Passage to the limit}\label{sec-conv}

In this section, we will study the convergence of the family $(G_\ep)_{\varepsilon \in (0,1)}$, given in \eqref{G}. In order to do it, for each $\varepsilon \in (0,1)$, let us consider 
    $L_\varepsilon:L^2 (0,T; \haa )\longrightarrow \mathbb{R}$, given by \[L_\varepsilon v=\frac{1}{\varepsilon^3}\int_0^T\int_{1-\varepsilon}^1\varphi_\varepsilon v\,dx\,dt,\] where $\varphi_\varepsilon$ is the solution of \eqref{2.3} with $\varphi_\varepsilon^0$ and $\varphi_\epsilon^1$ as initial data. We will prove the following result.

\begin{thm}\label{th3.4} 
\textcolor{black}{Under the notations given in Definition \ref{epdef}}, {consider an $\ep-$family $((\varphi^0_\varepsilon,\varphi^1_\varepsilon))_{\ep >0}$ associated to   $(\varphi^0,\varphi^1)\in L^2(0,1)\times \had$, and denote by $\varphi_\ep$ its $\ep-$solution, with}
\[
\varphi_\ep \stackrel{\ast}{\rightharpoonup} \varphi  \text{ in } L^\infty(0,T;L^2(0,1)).
\]
Assume that there exist $C>0$ and $\varepsilon_0>0$, such that
\[\frac{1}{\varepsilon^3}\int_0^T\int_{1-\varepsilon}^1|\varphi_\varepsilon|^2\,dx\,dt\leq C, \ \ \forall\varepsilon\in (0,\varepsilon_0).
\]
Then:
\begin{itemize}
        \item[(a)] $\varphi_x(\cdot,1)\in L^2(0,T)$;
        \item[(b)] $G_\varepsilon\stackrel{\ast}{\rightharpoonup} G$ in $H_\alpha^{-1}\times L^2(0,1)\times L^\infty(0,T;L^2(0,1))$, where \[G(u_0,u_1,h)=\frac{1}{3}\int_0^T\varphi_x(t,1)u_x(t,1)\,dt,\] 
        and $u$ is the solution of \eqref{pb1}, with $(u_0,u_1,h) \in H_\alpha^{-1}\times L^2(0,1)\times L^\infty(0,T;L^2(0,1))$ as the initial data;
        \item[(c)] $\displaystyle\frac{1}{3}\int_0^T|\varphi_x(t,1)|^2\,dt\leq\liminf\frac{1}{\varepsilon^3}\int_0^T\int_{1-\varepsilon}^1|\varphi_\varepsilon|^2\,dx\,dt.$
 \end{itemize}
 \end{thm}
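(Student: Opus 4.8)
\textbf{Proof strategy for Theorem \ref{th3.4}.}

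The plan is to treat the three assertions in the order (a), then (c), then (b), since the trace statement (a) is really a direct citation, the liminf bound (c) sets up the machinery, and the weak-$*$ convergence (b) is the heart of the matter. For part (a), I would observe that the hypotheses on $(\varphi^0_\ep,\varphi^1_\ep)$ together with the rescaled uniform bound $\frac{1}{\ep^3}\intw|\varphi_\ep|^2\dd\leq C$ are \emph{exactly} the hypotheses of Theorem \ref{th3.1}; hence $\varphi_x(\cdot,1)\in L^2(0,T)$ and, moreover, $\frac13\n{\varphi_x(\cdot,1)}{L^2(0,T)}^2\leq\liminf_{\ep\to0^+}\bigl(\frac{1}{\ep^3}\intw|\varphi_\ep|^2\dd\bigr)$. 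This simultaneously gives (a) and (c), so essentially no new work is needed there beyond checking that the weak-$*$ limit $\varphi$ of $\varphi_\ep$ in $L^\infty(0,T;L^2(0,1))$ coincides with the solution by transposition of \eqref{2.3} with data $(\varphi^0,\varphi^1)$ — which is the standard stability statement for solutions by transposition recalled just before Theorem \ref{th3.1}. By Proposition \ref{norm.equiv1} the finiteness of the boundary trace upgrades $(\varphi^0,\varphi^1)$ to $\ha\times L^2(0,1)$, so $\varphi$ is a finite-energy solution and $G$ is well defined.

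For part (b), fix a test triple $(u_0,u_1,h)\in\ha\times L^2(0,1)\times L^1(0,T;L^2(0,1))$ and let $u$ be the corresponding weak solution of \eqref{pb1}. I want to show $G_\ep(u_0,u_1,h)\to G(u_0,u_1,h)$. The natural device is a \emph{density/approximation} argument combined with a \emph{symmetry (integration by parts in $Q$)} identity. First, for smooth data $(u_0,u_1,h)$ with, say, $u\in L^2(0,T;H^2_\alpha\cap\ha)$ regular enough, multiply the equation for $\varphi_\ep$ by $u$ (or vice versa) and integrate over $Q$: the interior terms cancel by the self-adjointness of $-(x^\alpha\cdot_x)_x$, and one is left with boundary contributions at $t=0,T$ (handled by the prescribed data and by \eqref{condTe}-type vanishing, i.e. $\varphi_\ep$ has specified data, $u$ has specified data) plus the lateral term at $x=1$. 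Concretely I would establish an identity of the shape
\[
\frac{1}{\ep^3}\intw \varphi_\ep\,u\,\dd
= \text{(data pairings independent of the near-boundary region)} + \frac{1}{\ep^3}\intw (\text{remainder}),
\]
and then show the remainder, after integrating the $x$-variable across the thin strip $(1-\ep,1)$ and using $u(t,1)=0$ together with a Taylor/Hardy-type expansion $u(t,x)=(x-1)u_x(t,1)+o(x-1)$, converges to $\frac13\int_0^T\varphi_x(t,1)u_x(t,1)\,dt$. The factor $\frac13$ is exactly $\int_0^1 s^2\,ds$ after the rescaling $x=1-\ep s$, matching the $\ep^{-3}$ normalization; this is the degenerate analogue of the computation in \cite{fabre1992exact}. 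For general (non-smooth) data one then passes to the limit using the uniform bounds: $\varphi_\ep^0,\varphi_\ep^1$ are bounded in $L^2\times\had$, $\varphi_\ep$ is bounded in $L^\infty(0,T;L^2)$ by Proposition \ref{well-pos-trans}, the traces $\varphi_{\ep x}(\cdot,1)$ — via the argument of Theorem \ref{th3.1} — are bounded in $L^2(0,T)$, and the solution map $(u_0,u_1,h)\mapsto u$ is continuous; a $3\ep$-argument closes it. The boundedness of the functionals $G_\ep$ (needed to even speak of weak-$*$ convergence) follows from Theorem \ref{th3.2-fabre92}, which gives $\n{u}{L^2(\domw)}\leq C\ep^{3/2}(\n{u_0}{\ha}^2+\n{u_1}{L^2}^2+\n{h}{L^1(L^2)}^2)^{1/2}$, so $|G_\ep(u_0,u_1,h)|\leq \ep^{-3}\n{\varphi_\ep}{L^2(\domw)}\n{u}{L^2(\domw)}\leq C(\cdots)^{1/2}$ uniformly in $\ep$.

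The main obstacle I anticipate is the rigorous justification of the limit of the lateral term, i.e. showing that $\frac{1}{\ep^3}\intw\varphi_\ep u\,\dd\to\frac13\int_0^T\varphi_x(t,1)u_x(t,1)\,dt$ rather than merely bounding it. This splits into two coupled difficulties: (i) one needs strong (not just weak) convergence of $\varphi_\ep$ to $\varphi$ \emph{near the boundary} in a topology strong enough to pass to the limit against $u$ — here the degeneracy is harmless because $x^\alpha\approx1$ on $(1-\ep,1)$, so the regularity results of \cite{araujo2023regularity} (Theorems \ref{th3.2-fabre92} and \ref{th3.1}) apply as in the non-degenerate case; and (ii) one must control the error in replacing $u(t,x)$ by its linearization $(x-1)u_x(t,1)$ uniformly in the strip, which requires the hidden-regularity trace estimate Proposition \ref{prop-trace} for $u$ and the analogue for $\varphi_\ep$. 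I would handle (i) by writing $\varphi_\ep(t,x)=\varphi_\ep(t,1)-\int_x^1\varphi_{\ep y}(t,y)\,dy = -\int_x^1\varphi_{\ep y}(t,y)\,dy$ (using the boundary condition), rescaling $x=1-\ep s$, and using the $L^2(0,T)$-bound on $\varphi_{\ep x}(\cdot,1)$ plus convergence of these traces — the content of Theorem \ref{th3.1}'s proof — to identify the limit. Everything else (self-adjoint integration by parts, density of smooth data, the $3\ep$ argument) is routine once this core limit is in hand.
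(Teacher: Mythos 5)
Your treatment of (a) and (c) — reading them off from Theorem \ref{th3.1} after checking that the weak-$*$ limit of $\varphi_\ep$ is the transposition solution with data $(\varphi^0,\varphi^1)$ — is exactly what the paper does, and your boundedness estimate for $G_\ep$ via H\"older and Theorem \ref{th3.2-fabre92}, the weak-$*$ compactness, the Taylor decomposition of the \emph{test} solution $u(t,x)=-(1-x)u_x(t,1)+(1-x)V(t,x)$ producing the factor $\tfrac13=\frac{1}{\ep^3}\int_{1-\ep}^1(1-x)^2dx$, and the closing density argument all match the paper's Lemmas \ref{lema3.1}--\ref{lema3.2} and Corollaries \ref{cor3.2}, \ref{cor3.4} in structure.

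The gap is in the one step you yourself flag as the main obstacle: identifying the limit of the leading term $A_\ep=\frac{-1}{\ep^3}\intw(1-x)\varphi_\ep\,u_x(t,1)\dd$. Your proposed mechanism writes $\varphi_\ep(t,x)=-\int_x^1\varphi_{\ep y}(t,y)\,dy$ and invokes an $L^2(0,T)$ bound on the traces $\varphi_{\ep x}(\cdot,1)$ together with their convergence. But $\varphi_\ep$ is only a solution by transposition with data in $L^2(0,1)\times\had$, hence merely in $L^\infty(0,T;L^2(0,1))$: it has no spatial derivative to integrate, and the hidden-regularity trace (Proposition \ref{prop-trace}) is available only for finite-energy solutions with data in $\ha\times L^2(0,1)$. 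Theorem \ref{th3.1} gives $\varphi_x(\cdot,1)\in L^2(0,T)$ for the \emph{limit} $\varphi$ (whose data get upgraded to $\ha\times L^2$), not uniform trace bounds for the $\varphi_\ep$. The paper's way around this is the device your proposal is missing: it integrates $\varphi_\ep$ twice in time, defining $\phi_\ep(t,x)=\int_0^t\varphi_\ep\,ds+\theta_\ep$ and $\psi_\ep(t,x)=\int_0^t\phi_\ep\,ds+\psi^0_\ep$ with elliptic liftings $\theta_\ep,\psi^0_\ep$ of the initial data, so that $\psi_\ep$ is bounded in $C^0([0,T];H^2_\alpha\cap\ha)\cap C^1([0,T];\ha)$; it then throws the time derivatives onto the smooth test function ($v_{tx}(\cdot,1)$, $v_{ttx}(\cdot,1)$) — which is why the identification is first done only for $v\in\mathcal{D}((0,T);H^2_\alpha\cap\ha)$ — and uses the resulting equicontinuity in $x$ at $x=1$ to pass to the limit in $A_\ep$. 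Without this regularization-in-time step (or an equivalent transfer of derivatives onto $u$), the limit of $A_\ep$ cannot be justified at the regularity level of $\varphi_\ep$, so as written the core of part (b) does not close.
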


{We observe that (a) and (c) comes directly from Theorem {\ref{th3.1}}. In this case, let us {focus} on the convergence mentioned in (b). To be more precise, it will be a consequence of Corollaries {\ref{cor3.2}} and {\ref{cor3.4}}, given in the following. }

\begin{lem}\label{lema3.1},
     \textcolor{black}{Under the notations given in Definition \ref{epdef}}, let $((\varphi^0_\varepsilon,\varphi^1_\varepsilon))_{\ep >0}$ be an $\ep-$family associated with   $(\varphi^0,\varphi^1)\in L^2(0,1)\times \had$, denote by $\varphi_\ep$ its $\ep-$solution, and consider the limit $\varphi$ of $(\varphi_{\ep} )_{\ep >0}$. Assume that there exist a positive constant $C=C(T,\alpha)$ l{and} $\ep_0 >0$, such that \[\frac{1}{\varepsilon^3}\int_0^T\int_{1-\varepsilon}^1|\varphi_\varepsilon|^2\,dx\,dt\leq C, \forall \ep \in (0,\ep_0). \] 
     \textcolor{black}{Then, $(G_\ep )_{\ep >0}$ and $(L_\ep )_{\ep >0}$ are uniformly bounded in $(H_{\alpha}^{1} \times L^2 \times L^1 (0,T;L^2(0,1)))'$ and $(L^2 (0,T;H_{\alpha}^{2}))'$, respectively.}
\end{lem}

\begin{proof}
    To deduce that $G_\varepsilon$ is continuous, it is enough to use Hölder inequality and Theorem \ref{th3.2-fabre92}. Now, let us analyze the continuity of $L_\varepsilon$. Again, from Hölder inequality, we have
    \[
    \textcolor{black}{|L_\varepsilon v |^2} \leq C\frac{1}{\varepsilon^3}\int_0^T\int_{1-\varepsilon}^1|v|^2\,dx\,dt.
    \]
    As in the proof of Lemma 4.2 of \cite{araujo2022boundary}, we can see that \[|v(t,x)|^2\leq \left[(1-x)^2|v_x(t,1)|^2+\varepsilon(1-x)^2\int_{1-\varepsilon}^1|v_{xx}(t,r)|^2\,dr\right],\]
    whence
    \[ \textcolor{black}{|L_\varepsilon (v)|^2}\leq C(I_1+I_2),\]where \[I_1=\frac{1}{\varepsilon^3}\int_0^T\int_{1-\varepsilon}^1(1-x)^2|v_x(t,1)|^2\,dx\,dt\]
    and 
    \[I_2=\frac{1}{\varepsilon^3}\int_0^T\int_{1-\varepsilon}^1(1-x)^2\varepsilon\left(\int_{1-\varepsilon}^1|v_{xx}|^2\,dr\right)\,dx\,dt.\] 
 Applying Corollary 2.6 of \cite{araujo2022boundary}, we have 
    \[I_1 =\frac{1}{\varepsilon^3}\varepsilon^3\int_0^T|v_x(t,1)|^2\,dx\,dt\leq C_{\alpha} |v|^2_{L^2(0,T;H_\alpha^2)},\]
\textcolor{black}{where $C_\alpha >0$ only depends on $\alpha$}. On the other hand, for $\varepsilon\in (0,\frac{1}{2} )$, we can use the inequality (2.13) of \cite{araujo2022boundary} to deduce that 
    \begin{equation*}
        \begin{aligned}
        I_2&\leq& \frac{1}{\varepsilon^3}\int_0^T\int_{1-\varepsilon}^1\varepsilon(1-x)^2\left[\frac{4}{(1-\varepsilon)^{2\alpha}}+\frac{4\alpha^2}{(1-\varepsilon)^{2+\alpha}}\right]|v|^2_{H_\alpha^2}\,dx\,dt\\
        &=& \frac{\ep}{3}\left[\frac{4}{(1-\varepsilon)^{2\alpha}}+\frac{4\alpha^2}{(1-\varepsilon)^{2+\alpha}}\right]\int_0^T|v|^2_{H_\alpha^2}\,dt\ \leq \ \tilde{C}_{\alpha} |v|^2_{L^2(0,T;H_\alpha^2)},
        \end{aligned}
    \end{equation*}
    \textcolor{black}{where $\tilde{C}_{\alpha} >0$ only depends on $\alpha$.} 
    \end{proof}

\begin{cor}\label{cor3.2}
\textcolor{black}{Under the notations given in Definition \ref{epdef}}, let $((\varphi^0_\varepsilon,\varphi^1_\varepsilon))_{\ep >0}$ be an $\ep-$family associated with   $(\varphi^0,\varphi^1)\in L^2(0,1)\times \had$, denote by $\varphi_\ep$ its $\ep-$solution, and consider the limit $\varphi$ of $(\varphi_{\ep} )_{\ep >0}$. Assume that there exist a positive constant $C=C(T,\alpha)$ {and} $\ep_0 >0$, such that \[\frac{1}{\varepsilon^3}\int_0^T\int_{1-\varepsilon}^1|\varphi_\varepsilon|^2\,dx\,dt\leq C, \forall \ep \in (0,\ep_0 ).\] Then, there exist two linear and continuous functionals 
    \[{\tilde{G}} :H_\alpha^1 \times L^2(0,1) \times L^1(0,T;L^2(0,1)) \longrightarrow \mathbb{R} \text{ and } L:L^2(0,T; \haa )\longrightarrow \mathbb{R},\] 
    such that, up to subsequences, we have {$G_\varepsilon \stackrel{\ast}{\rightharpoonup} \tilde{G}$} in $[H_\alpha^1\times L^2(0,1)\times L^1(0,T;L^2(0,1))]'$ and \textcolor{black}{$L_\varepsilon  \stackrel{\ast}{\rightharpoonup}  L$} in $L^2(0,T; \haa )'$, as $\ep \to 0^+$.
\end{cor}
\begin{proof}
    {In fact, it is a consequence of having $(G_\ep )_{\ep >0}$ and $(L_\ep )_{\ep >0}$ uniformly bounded in $(H_{\alpha}^{1} \times L^2 \times L^1 (0,T;L^2(0,1)))'$ and $(L^2 (0,T;H_{\alpha}^{2}))'$, respectively.} 
\end{proof}

\begin{lem}\label{lema3.2}
    For any $v\in \mathcal{D}((0,T); \haa )$, we have
    \[L(v)=\frac{1}{3}\int_0^T\varphi_x(t,1)v_x(t,1)\,dt,\]
    where $L \in L^2(0,T; \haa )'$ is the functional mentioned in Corollary \ref{cor3.2}.
\end{lem}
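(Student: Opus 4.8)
The plan is to identify the limit functional $L$ by testing it against smooth functions and using the only available characterization, namely the behavior of solutions of the wave equation near the degenerate-free boundary $x=1$. Since $L$ was obtained as a weak limit of $L_\varepsilon$ along a subsequence (Corollary~\ref{cor3.2}), and $L_\varepsilon v = \frac{1}{\varepsilon^3}\int_0^T\int_{1-\varepsilon}^1 \varphi_\varepsilon v\,dx\,dt$, it suffices to pass to the limit in this expression for a fixed $v\in\mathcal{D}((0,T);H_\alpha^2\cap H_\alpha^1)$. The main tool will be the pointwise/Taylor-type expansion of $v$ near $x=1$ used already in the proof of Lemma~\ref{lema3.1} (from \cite{araujo2022boundary}): for $x\in(1-\varepsilon,1)$, $v(t,x)=(1-x)v_x(t,1)+O(1-x)\cdot(\text{remainder in }H_\alpha^2)$, together with the analogous expansion for $\varphi_\varepsilon$. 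Because $v$ is compactly supported in time, all boundary terms at $t=0,T$ vanish, which keeps the computation clean.

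First I would substitute the expansion of $v$ into $L_\varepsilon v$, splitting into the leading term $\frac{1}{\varepsilon^3}\int_0^T v_x(t,1)\int_{1-\varepsilon}^1 (1-x)\varphi_\varepsilon(t,x)\,dx\,dt$ and a remainder. For the remainder, the estimates in Lemma~\ref{lema3.1} (the bounds on $I_2$, using inequality (2.13) of \cite{araujo2022boundary} and Hardy--Poincar\'e) show it is $O(\varepsilon)\|v\|_{L^2(0,T;H_\alpha^2)}$ times the uniform bound on $\frac{1}{\varepsilon^3}\int_0^T\int_{1-\varepsilon}^1|\varphi_\varepsilon|^2$, hence vanishes. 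For the leading term I would likewise expand $\varphi_\varepsilon(t,x)\approx (1-x)\varphi_{\varepsilon,x}(t,1)$; then $\frac{1}{\varepsilon^3}\int_{1-\varepsilon}^1 (1-x)^2\,dx = \frac{1}{\varepsilon^3}\cdot\frac{\varepsilon^3}{3}=\frac{1}{3}$, producing $\frac{1}{3}\int_0^T \varphi_{\varepsilon,x}(t,1)v_x(t,1)\,dt$. Since the hypotheses of Theorem~\ref{th3.1} are met (via the uniform bound and the weak-$\ast$ convergence $\varphi_\varepsilon\stackrel{\ast}{\rightharpoonup}\varphi$), one has $\varphi_x(\cdot,1)\in L^2(0,T)$ and, crucially, $\varphi_{\varepsilon,x}(\cdot,1)\rightharpoonup \varphi_x(\cdot,1)$ weakly in $L^2(0,T)$ (this weak convergence of traces is exactly the content extracted from \cite{araujo2023regularity}, Theorem~\ref{th3.1}, or a companion statement). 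Pairing this weak convergence against the fixed $L^2(0,T)$ function $v_x(\cdot,1)$ gives the limit $\frac{1}{3}\int_0^T \varphi_x(t,1)v_x(t,1)\,dt$. By uniqueness of weak limits, $L(v)$ equals this value.

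The step I expect to be the main obstacle is controlling the cross terms in the double expansion, i.e. justifying that $\frac{1}{\varepsilon^3}\int_0^T\int_{1-\varepsilon}^1(1-x)\varphi_\varepsilon(t,x)\,dx\,v_x(t,1)\,dt \to \frac{1}{3}\int_0^T\varphi_x(t,1)v_x(t,1)\,dt$ rigorously, since $\varphi_\varepsilon$ only converges weakly-$\ast$ in $L^\infty(0,T;L^2(0,1))$ and its trace $\varphi_{\varepsilon,x}(\cdot,1)$ is not obviously bounded without invoking the delicate trace/regularity machinery. Concretely, one must show $\frac{1}{\varepsilon^3}\int_{1-\varepsilon}^1(1-x)\varphi_\varepsilon(t,\cdot)\,dx$ and $\frac13\varphi_{\varepsilon,x}(t,1)$ have the same weak limit; this is where the estimate $|\varphi_\varepsilon(t,x)-(1-x)\varphi_{\varepsilon,x}(t,1)|^2 \le \varepsilon(1-x)^2\int_{1-\varepsilon}^1|\varphi_{\varepsilon,xx}|^2$ and the energy bounds on $\varphi_\varepsilon$ (which is a finite-energy solution of \eqref{2.3} once Theorem~\ref{th3.1} is applied) must be combined carefully, mirroring the argument for $L_\varepsilon$'s continuity but now quantitatively. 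Once this identification is in place, density of $\mathcal{D}((0,T);H_\alpha^2\cap H_\alpha^1)$ and continuity of both $L$ and the right-hand side finish the proof, and this is the claimed formula for $L$.
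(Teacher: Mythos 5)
There is a genuine gap in the treatment of the leading term $A_\ep=\frac{1}{\ep^3}\int_0^T v_x(t,1)\int_{1-\ep}^1(1-x)\varphi_\ep\,dx\,dt$. Your plan is to expand $\varphi_\ep(t,x)\approx(1-x)\varphi_{\ep x}(t,1)$ and to use a weak convergence $\varphi_{\ep x}(\cdot,1)\rightharpoonup\varphi_x(\cdot,1)$ in $L^2(0,T)$. But $\varphi_\ep$ is only a solution by transposition with data $(\varphi^0_\ep,\varphi^1_\ep)\in L^2(0,1)\times H^{-1}_\alpha$: the hidden-regularity trace (Proposition \ref{prop-trace}) is available only for weak solutions with data in $H^1_\alpha\times L^2(0,1)$, so $\varphi_{\ep x}(\cdot,1)$ need not exist in $L^2(0,T)$, and the Taylor remainder bound $|\varphi_\ep-(1-x)\varphi_{\ep x}(t,1)|^2\le \ep(1-x)^2\int_{1-\ep}^1|\varphi_{\ep xx}|^2$ requires $H^2_\alpha$-regularity that $\varphi_\ep$ does not have. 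Moreover, Theorem \ref{th3.1} only asserts that the \emph{limit} $\varphi$ has $\varphi_x(\cdot,1)\in L^2(0,T)$ together with a $\liminf$ inequality; it does not provide convergence (or even existence) of the traces $\varphi_{\ep x}(\cdot,1)$. You flag this as ``the main obstacle'' but do not supply the idea that closes it, and at the stated regularity level the step as written fails.

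The paper's proof resolves exactly this point by never differentiating $\varphi_\ep$ in $x$. Instead it integrates twice in time: it sets $\phi_\ep(t,x)=\int_0^t\varphi_\ep\,ds+\theta_\ep$ with $(x^\alpha\theta_{\ep x})_x=\varphi^1_\ep$, so that $\phi_\ep$ is a finite-energy solution in $C^0([0,T];H^1_\alpha)\cap C^1([0,T];L^2(0,1))$, and then $\psi_\ep(t,x)=\int_0^t\phi_\ep\,ds+\psi^0_\ep$ with $(x^\alpha\psi^0_{\ep x})_x=\varphi^0_\ep$, giving $\psi_\ep\in C^0([0,T];H^2_\alpha\cap H^1_\alpha)$, whose traces and second space derivatives do make sense. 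Since $v$ is compactly supported in time, the time derivatives can be moved onto $v$ with no boundary terms, turning $A_\ep$ into expressions in $\phi_\ep$, $\psi_\ep$ paired with $v_{tx}(\cdot,1)$, $v_{ttx}(\cdot,1)$; the factor $\frac{1}{\ep^3}\int_{1-\ep}^1(1-x)^2\,dx=\frac13$ then produces the main term $-\frac13\int_0^T\phi_{\ep x}(t,1)v_{tx}(t,1)\,dt$, and the error is controlled by the uniform continuity at $r=1$ of $r\mapsto\langle\psi_{\ep x}(\cdot,r)-\psi_{\ep x}(\cdot,1),v_{ttx}(\cdot,1)\rangle_{L^2(0,T)}$, which follows from the boundedness of $(\psi_\ep)$ in $C^0([0,T];H^2_\alpha\cap H^1_\alpha)$. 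Your handling of the remainder $B_\ep$ (Cauchy--Schwarz against the uniform bound on $\ep^{-3}\int_0^T\int_{1-\ep}^1|\varphi_\ep|^2$) does match the paper, but the core of the lemma is the regularization-in-time device, which is absent from your argument.
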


\begin{proof}   {Let us fix $v\in \mathcal D((0,T); \haa )$. From the identity (2.15) of {\cite{araujo2022boundary}}, we know that}
    \[v(t,x)=-(1-x)v_x(t,1)+(1-x)V(t,x),\]
    where
    \[V(t,x)=\frac{1}{1-x}\int_x^1\int_s^1 \textcolor{black}{v_{xx}}(t,r)\,dr\,ds.\]
    Hence, \[L_\varepsilon(v)=A_\varepsilon+B_\varepsilon,
    \]
     where \[A_\varepsilon :=\frac{-1}{\varepsilon^3}\int_0^T\int_{1-\varepsilon}^1(1-x)\varphi_\varepsilon v_x(t,1)\,dx\,dt \ \ \ \mbox{and} \ \ \ B_\varepsilon :=\frac{1}{\varepsilon^3}\int_0^T\int_{1-\varepsilon}^1(1-x)\varphi_\varepsilon V(t,x)\,dx\,dt.\]
Assuming that $\ep \in (0,\frac{1}{2})$, we have $V\in L^2\left((0,T)\times (\frac{1}{2},1)\right)$. This leads us to 
    \begin{equation*}
        \begin{split}
            |B_\varepsilon|& \leq \left[\frac{1}{\varepsilon^3}\int_0^T\int_{1-\varepsilon}^1|\varphi_\varepsilon|^2\,dx\,dt\right]^{\frac{1}{2}}\left[\frac{1}{\varepsilon^3}\int_0^T\int_{1-\varepsilon}^1(1-x)\textcolor{black}{^{2}}|V|^2\,dx\,dt\right]^{\frac{1}{2}}\\
            & \leq C\left[\textcolor{black}{\frac{1}{\ep ^3}} \int_0^T\int_{1-\varepsilon}^1 \textcolor{black}{(1-x)^2}|V|^2\,dx\,dt\right]^{\frac{1}{2}} \\
            &\textcolor{black}{\leq C\left[ \frac{1}{3} \sup_{x\in [1-\ep ,1]} \n{V(\cdot ,x)}{L^2 (0,T)}^2 \right]^{\frac{1}{2}}.}
        \end{split}
    \end{equation*}
     \textcolor{black}{Applying \cite[Lemma 2.8]{araujo2022boundary}, we deduce that $|B_{\ep}| \to 0$, as $\varepsilon\to 0^+$}. At this point, it remains to deal with $A_\varepsilon$. To do that, let us take $\theta_\varepsilon\in H_\alpha^1$ such that \textcolor{black}{$(x^\alpha\theta_{\varepsilon x})_x=\varphi_\varepsilon^1$ in $H_\alpha^{-1}$} and define $$\phi_\varepsilon(t,x)=\int_0^t\varphi_\varepsilon(s,x)\,ds+\theta_\varepsilon(x).$$ 
 Observe that $\phi_\varepsilon(0,\cdot)=\theta_\varepsilon\in H_\alpha^1$, $\phi_{\varepsilon t}(0,\cdot)=\varphi^0_\varepsilon\in L^2(0,1)$ and \textcolor{black}{$\phi_{\varepsilon tt}-(x^\alpha\phi_{\varepsilon x})_x=0$.} This allows us to conclude that $\phi_\varepsilon \in C^0([0,T];H_\alpha^1)\cap C^1([0,T];L^2(0,1))$.

On the other hand, since $\varphi^1\in H_\alpha^{-1}$, there exists $\theta\in H_\alpha^1$ such that $$\langle \varphi^1,\xi\rangle_{H_\alpha^{-1}}=\int_0^1x^\alpha\theta_x\xi_x\,dx \ \ \forall\xi\in H_\alpha^1 .$$
So that, 
\[
    \int_0^1x^\alpha\theta_{\varepsilon x}\xi_x\,dx=\langle(x^\alpha \theta_{\varepsilon x})_x,\xi\rangle_{H_\alpha^{-1}}=\langle \varphi^1_\varepsilon,\xi\rangle_{H_\alpha^{-1}}\to\langle\varphi^1,\xi\rangle_{H_\alpha^{-1}}=\int_0^1x^\alpha\theta_x\xi_x\,dx ,\ \ 
\]
\textcolor{black}{as $\ep \to 0^+$, for each $\xi \in H_\alpha^1$.}
It means that
\begin{equation}\label{eq3.1}
        \theta_\varepsilon\rightharpoonup\theta \ \ \mbox{in} \ \ H_\alpha^1.
    \end{equation}
    Since $\varphi^0_\varepsilon\in L^2(0,1)$, we can also take $\psi^0_\varepsilon\in  \textcolor{black}{H_\alpha^2 }$ such that \textcolor{black}{$(x^\alpha \psi_{\varepsilon x}^{\textcolor{black}{0}})_x=\varphi^0_\varepsilon$ in $L^2(0,1)$} and define \[\psi_\varepsilon(t,x)=\int_0^t\phi_\varepsilon(s,x)\,ds+\psi^0_\varepsilon(x).\] 
    Thus, $\psi_\varepsilon(0,\cdot)=\psi^0_{\textcolor{black}{\ep}}\in  \haa $, $\psi_{\varepsilon t}(0,\cdot)=\theta_\varepsilon\in H_\alpha^1$ and \textcolor{black}{$\psi_{\varepsilon tt}-(x^\alpha\psi_{\varepsilon x})_x=0$.} Consequently, $\psi_\varepsilon \in C^0([0,T]; \haa )\cap C^1([0,T];H_\alpha^1)$. 
    
    Finally, we can rewrite $A_\varepsilon$ as follows:
\begin{equation*}
    \begin{split}
         A_\varepsilon&=\frac{-1}{\varepsilon^3}\int_0^T\int_{1-\varepsilon}^1(1-x)\phi_{\varepsilon t}(t,x)v_x(t,1)\,dx\,dt\\
         &=\frac{1}{\varepsilon^3}\int_0^T\int_{1-\varepsilon}^1(1-x)\phi_\varepsilon(t,x)v_{tx}(t,1)\,dx\,dt\\
         &=\frac{-1}{\varepsilon^3}\int_0^T\int_{1-\varepsilon}^1(1-x)\left[\int_x^1\phi_{\varepsilon x}(t,r)\,dr\right]v_{tx}(t,1)\,dx\,dt\\ 
         &=\frac{-1}{\varepsilon^3}\int_{1-\varepsilon}^1(1-x)\int_x^1\langle\phi_{\varepsilon x}(\cdot,r)-\phi_{\varepsilon x}(\cdot,1),v_{tx}(\cdot,1)\rangle_{L^2(0,T)}\,dr\,dx\\
         & \qquad \qquad-\frac{1}{3}\int_0^T\phi_{\varepsilon x}(t,1)v_{tx}(t,1)\,dt\\
         &=\frac{1}{\varepsilon^3}\int_{1-\varepsilon}^1(1-x)\int_x^1\langle\psi_{\varepsilon x}(\cdot,r)-\psi_{\varepsilon x}(\cdot,1),v_{ttx}(\cdot,1)\rangle_{L^2(0,T)}\,dr\,dx\\
         & \qquad\qquad \textcolor{black}{-\frac{1}{3}\int_0^T\phi_{\varepsilon x}(t,1)v_{tx}(t,1)\,dt }.
    \end{split}
\end{equation*}
     \textcolor{black}{Since 
     \[
     \psi_\varepsilon \in C^0([0,T]; \haa )\cap C^1([0,T];H_\alpha^1),
     \]
     we have
     \[
     \psi_{\ep x} \in L^2(0,T;H^1 (1-\ep,1)) \equiv H^1 ((1-\ep,1);L^2(0,T)) \hookrightarrow C^0 ([1-\ep,1];H^{-1} (0,T)).
     \]
     Arguing as in \cite[Lemma 2.8]{araujo2022boundary}, we obtain 
     }
{\color{black}
\begin{align*}
\displaystyle & \left|\frac{1}{\varepsilon^3}\int_{1-\varepsilon}^1(1-x)\int_x^1\langle\psi_{\varepsilon x}(\cdot,r)-\psi_{\varepsilon x}(\cdot,1),v_{ttx}(\cdot,1)\rangle_{L^2(0,T)}\,dr\,dx\right| \\
& \hspace{0.5cm}\leq \frac{C}{\varepsilon^3}\int_{1-\varepsilon}^1(1-x) \int_{x}^{1} \n{\psi_{\varepsilon x}(\cdot,r)-\psi_{\varepsilon x}(\cdot,1)}{H^{-1} (0,T)} drdx \\
& \hspace{0.5cm}\leq \frac{C}{\varepsilon^3} \cdot \frac{\ep^3}{3} \sup_{r\in [1-\ep ,1]} \n{\psi_{\varepsilon x}(\cdot,r)-\psi_{\varepsilon x}(\cdot,1)}{H^{-1} (0,T)},
\end{align*}
}
\textcolor{black}{where this last term converges to zero, as $\ep \to 0^+$.}

     \textcolor{black}{Finally, applying \cite[Corollary 2.6]{araujo2022boundary}, we know that
     \[
     \displaystyle \varphi_{\ep} \rightharpoonup\varphi \text{ weakly in } L^2 (0,T;H^2_\alpha ),
     \]
    which implies
     \[
\displaystyle  -\frac{1}{3}\int_0^T\phi_{\varepsilon x}(t,1)v_{tx}(t,1)\,dt = \frac{1}{3}\int_0^T \varphi_{\varepsilon x}(t,1)v_{x}(t,1)\,dt 
\to \frac{1}{3}\int_0^T \varphi_{x}(t,1) v_{x}(t,1)\,dt,
     \]
     as $\ep \to 0^+$. Summarizing, we have 
     \[
     \displaystyle L_\ep (v) = A_\ep +B_\ep \to \frac{1}{3}\int_0^T \varphi_{x}(t,1) v_{x}(t,1)\,dt,
     \]
     as $\ep \to 0^+$, for each $v\in \mathcal D((0,T); \haa )$, as desired.
     }
\end{proof}

Since $\mathcal D([0,T]; \haa )$ is dense in $L^2(0,T; \haa )$, from Lemmas \ref{lema3.1} and \ref{lema3.2}, we obtain the following result:

    \begin{cor}\label{cor3.4}
        For any $v\in L^2(0,T;\haa )$, we have \[L(v)=\frac{1}{3}\int_0^T\varphi_x(t,1)v_x(t,1)\,dt,\]
        where $L \in L^2(0,T;\haa)'$ is the functional mentioned in Corollary \ref{cor3.2}.
    \end{cor}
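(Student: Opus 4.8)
The plan is to promote the identity of Lemma \ref{lema3.2}, which holds on the dense subspace $\mathcal{D}((0,T);H_\alpha^2\cap H_\alpha^1)$, to the whole space $X:=L^2(0,T;H_\alpha^2\cap H_\alpha^1)$ by a continuous-extension argument. Writing $\widetilde L(v):=\frac13\int_0^T\varphi_x(t,1)v_x(t,1)\,dt$ for the right-hand side, the first step is to check that $\widetilde L$ is a well-defined continuous linear functional on $X$. Indeed, item (a) of Theorem \ref{th3.4} gives $\varphi_x(\cdot,1)\in L^2(0,T)$, and the trace bound already used in the proof of Lemma \ref{lema3.1} (Corollary 2.6 of \cite{araujo2022boundary}) yields $\int_0^T|v_x(t,1)|^2\,dt\le C\|v\|_X^2$ for all $v\in X$; together with Cauchy--Schwarz this gives $|\widetilde L(v)|\le C\|v\|_X$. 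On the other side, $L\in X'$ holds by its construction in Corollary \ref{cor3.2} (and, as a double check, the $\varepsilon$-uniform estimates in Lemma \ref{lema3.1} give $\sup_{\varepsilon}\|L_\varepsilon\|_{X'}<\infty$, hence $\|L\|_{X'}\le\liminf_{\varepsilon\to0^+}\|L_\varepsilon\|_{X'}<\infty$).

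The second step is the density argument. Since $\mathcal{D}((0,T);H_\alpha^2\cap H_\alpha^1)$ is dense in $X$, given $v\in X$ I would choose $v_n\in\mathcal{D}((0,T);H_\alpha^2\cap H_\alpha^1)$ with $v_n\to v$ in $X$. Then $L(v_n)\to L(v)$ by continuity of $L$, while $(v_n)_x(\cdot,1)\to v_x(\cdot,1)$ in $L^2(0,T)$ by the trace estimate above, so $\widetilde L(v_n)\to\widetilde L(v)$. Lemma \ref{lema3.2} gives $L(v_n)=\widetilde L(v_n)$ for every $n$, and passing to the limit yields $L(v)=\widetilde L(v)$, which is exactly the assertion of the corollary.

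I do not anticipate a genuine obstacle here: it is a routine density/continuity argument, since the two functionals have already been shown to agree on a dense set and both are continuous on $X$. The only ingredient that is not a one-line verification is the continuity of the trace map $v\mapsto v_x(\cdot,1)$ from $X$ into $L^2(0,T)$; but this is precisely the estimate invoked for $L_\varepsilon$ in Lemma \ref{lema3.1}, so it is available off the shelf and no new work is needed.
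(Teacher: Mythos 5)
Your proposal is correct and follows essentially the same route as the paper, which obtains the corollary in one line from the density of $\mathcal{D}((0,T);H_\alpha^2\cap H_\alpha^1)$ in $L^2(0,T;H_\alpha^2\cap H_\alpha^1)$ together with Lemmas \ref{lema3.1} and \ref{lema3.2}. You merely make explicit the two continuity checks (for $L$ and for the boundary-trace functional $v\mapsto\frac13\int_0^T\varphi_x(t,1)v_x(t,1)\,dt$) that the paper leaves implicit.
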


{\color{black} 
\begin{proof}[{Proof of Theorem} \ref{th3.4}(b)]
Take $(y^0,y^1,f)\in  H^1_{\alpha} \times L^2(0,1)\times L^1(0,T;L^2(0,1))$ and let $y$ be the corresponding solution of \eqref{pb1}. Consider sequences $(y^0_n)_{n=1}^{\infty}$, $(y^1_n)_{n=1}^{\infty}$ and  $(f_n)_{n=1}^{\infty}$ in $H^2_{\alpha}$, $H^1_{\alpha}$ and $L^1(0,T;H^1_{\alpha})$, respectively, such that
\[
\displaystyle y^0_n \to y^0 \text{  in  } H^1_{\alpha},
\]
\[
\displaystyle y^1_n \to y^1 \text{  in  } L^2 (0,1)
\]
and
\[
\displaystyle f_n \to f \text{  in  } L^1(0,T,L^2(0,1)).
\]
For each $n\in \mathbb N$, let $y_n$ be the solution of \eqref{pb1} having $(y^0_n, y^1_n ,f_n )$ as initial data. In particular, $y_n \in L^2(0,T;H^2_{\alpha})$. Since $G_\ep (y^0_n, v^1_n, f_n) = L_{\ep}(y_n)$, Corollaries \ref{cor3.2} and \ref{cor3.4} give us
\[
\displaystyle\lim_{\ep \to 0}  G_\ep (y^0_n, y^1_n, f_n) = L(y_n ) = G (y^0_n, y^1_n, f_n),
\]
\textcolor{black}{for each $n\in \mathbb N$, where $G$ is given in \eqref{G}}. We claim that $G_\ep (y^0, y^1, f) \to G(y^0, y^1, f) $, as $\ep \to 0$, which means that 
\[
G_\varepsilon\stackrel{\ast}{\rightharpoonup} G \text{ in } H_\alpha^{-1}\times L^2(0,1)\times L^\infty(0,T;L^2(0,1))
\]
\textcolor{black}{and $\tilde{G} = G$, where $\tilde{G}$ was obtained in Corollary \ref{cor3.2}}. Indeed, fix $r>0$ arbitrarily. So, there exists $m\in \mathbb N$ such that
\[
\n{(y^0_m, y^1_m, f_m) - (y^0,y^1,f)}{E}
< \min \left\{ \frac{r}{3C_0}, \frac{r}{3\|G\|_{E'}  }\right\},
\]
where $E=H^1_{\alpha} \times L^2(0,1) \times L^1(0,T;L^2(0,1))$ and $\displaystyle C_0 = \sup_{\ep \in (0,1)} \|G_\ep \|_{E'} >0$. In particular, there exists $\ep_1 \in (0,1)$ such that
\[
\displaystyle |G_\ep (y^0_m, y^1_m, f_m)-G (y^0_m, y^1_m, f_m)|<\frac{r}{3},
\]
for any $\ep \in (0,\ep_1 )$. Hence,
\begin{align*}
\displaystyle |G_\ep (y^0, y^1, f) - G (y^0, y^1, f) |
&\leq \n{G_\ep}{E'} \n{(y^0,y^1,f)-(y^0_m, y^1_m, f_m)}{E} \\
&\hspace{.5cm}+ |G_\ep (y^0_m, y^1_m, f_m)-G (y^0_m, y^1_m, f_m)| \\
&\hspace{.5cm}+\n{G}{E'} \n{(y^0_m, y^1_m, f_m) - (y^0,y^1,f)}{E} \\
&< C_0 \cdot \frac{r}{3C_0} + \frac{r}{3} + \|G\|_{E'} \cdot \frac{r} {3\|G\|_{E'}} \\
&=r,
\end{align*}
for any $\ep \in (0,\ep_1 )$. It concludes the proof.
\end{proof}
}

\section{Proof of Theorem \ref{th-F-3.4}}\label{sec-prova1}

This section is devoted to the obtainment of Theorem \ref{th-F-3.4}, which is a consequence of the following result.

\begin{thm}\label{th3.6}
    Let  $T_\alpha>0$ defined in \eqref{Ta}. There exist $C>0$ and $\varepsilon_0>0$ such that, for any $T>T_\alpha$, $(\phi^0,\phi^1)\in H_\alpha^1\times L^2(0,1)$, and $\phi$ solution of \eqref{2.3} with this data, we have \[\|\phi^0\|^2_{H_\alpha^1}+\|\phi^1\|^2_{L^2(0,1)}\leq C\left[\frac{1}{\varepsilon^3}\int_0^T\int_{1-\varepsilon}^1|\phi_t|^2\,dx\,dt\right],\ \forall \varepsilon\in(0,\varepsilon_0).\]
\end{thm}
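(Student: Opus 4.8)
\textbf{Proof proposal for Theorem \ref{th3.6}.}

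The plan is to mimic the structure already used for Proposition \ref{norm.equiv5}, replacing the weighted-gradient term $x^\alpha|v_x|^2$ by the time-derivative term $|\phi_t|^2$. The key observation is that Proposition \ref{norm.equiv5} already gives
\[
\|\phi^0\|^2_{H_\alpha^1}+\|\phi^1\|^2_{L^2(0,1)}\leq \frac{C}{\varepsilon^3}\int_0^T\int_{1-\varepsilon}^1\left(|\phi_t|^2+|\phi|^2\right)\,dx\,dt,
\]
so the whole game reduces to absorbing the term $\frac{1}{\varepsilon^3}\iint_{Q_\varepsilon}|\phi|^2$ by the desired right-hand side $\frac{1}{\varepsilon^3}\iint_{Q_\varepsilon}|\phi_t|^2$, at the cost of a slightly larger cut-off window and the standard shift-in-time trick. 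First I would fix a cut-off $h\in C^1([0,1])$ with $h=0$ on $[0,1-\varepsilon]$, $h=1$ on $[1-\varepsilon/2,1]$ and $|h_x|\le C/\varepsilon$, set $\sigma(t,x)=t(T-t)h(x)$, multiply the equation by $\sigma\phi$ and integrate by parts over $Q$. As in \eqref{v1} this yields
\[
\intq \sigma x^\alpha|\phi_x|^2\,dx\,dt=\intq\left(\sigma_t\phi\phi_t+\sigma|\phi_t|^2-\sigma_x\phi x^\alpha\phi_x\right)\,dx\,dt.
\]
The first two terms on the right are bounded by $\frac{C}{\varepsilon}\iint_{Q_\varepsilon}(|\phi|^2+|\phi_t|^2)$, while for the last one Young's inequality with small parameter absorbs $\delta\iint\sigma x^\alpha|\phi_x|^2$ into the left side and leaves $\frac{C}{\varepsilon}\iint_{Q_\varepsilon}|\phi|^2$. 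Combined with Hardy--Poincar\'e (Proposition \ref{hardy}) applied on $[1-\varepsilon,1]$, where $x^{\alpha-2}$ is comparable to a constant, this estimates $\iint_{Q_\varepsilon}|\phi|^2$ in terms of $\iint_{Q_\varepsilon}x^\alpha|\phi_x|^2$ and hence in terms of $\frac{1}{\varepsilon}\iint_{Q_\varepsilon}|\phi_t|^2$ plus a reabsorbable piece — but the bookkeeping must be done carefully, since a naive Hardy estimate loses powers of $\varepsilon$.

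The cleanest route, and the one I would actually follow, is to avoid reproving an interior estimate from scratch and instead chain Proposition \ref{norm.equiv5} with a Caccioppoli-type (reverse Poincar\'e) inequality: on the strip $(1-\varepsilon,1)$ one has, for each fixed $t$ and using the boundary value $\phi(t,1)=0$,
\[
\int_{1-\varepsilon}^1|\phi(t,x)|^2\,dx \le C\varepsilon^2\int_{1-\varepsilon}^1|\phi_x(t,x)|^2\,dx,
\]
a one-dimensional Poincar\'e inequality on an interval of length $\varepsilon$. Since $x^\alpha$ is bounded below by $(1-\varepsilon_0)^\alpha>0$ on this strip, $\int_{1-\varepsilon}^1|\phi_x|^2 \le C\int_{1-\varepsilon}^1 x^\alpha|\phi_x|^2$, and then \eqref{v2}-type control (obtained exactly as in Proposition \ref{norm.equiv5}) gives $\iint_{Q_\varepsilon}x^\alpha|\phi_x|^2\le \frac{C}{\varepsilon^2}\iint_{Q_\varepsilon}(|\phi_t|^2+|\phi|^2)$. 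Feeding the Poincar\'e bound back produces $\frac{1}{\varepsilon^3}\iint_{Q_\varepsilon}|\phi|^2 \le \frac{C}{\varepsilon^3}\cdot\frac{\varepsilon^2}{\varepsilon^2}\iint_{Q_\varepsilon}(|\phi_t|^2+|\phi|^2)$, i.e. $\frac{1}{\varepsilon^3}\iint_{Q_\varepsilon}|\phi|^2\le C\left(\frac{1}{\varepsilon^3}\iint_{Q_\varepsilon}|\phi_t|^2 + \frac{1}{\varepsilon}\iint_{Q_\varepsilon}|\phi|^2\right)$; for $\varepsilon$ small the last term is a genuine absorption into the left-hand side. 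Substituting the resulting bound $\frac{1}{\varepsilon^3}\iint_{Q_\varepsilon}|\phi|^2\le \frac{C}{\varepsilon^3}\iint_{Q_\varepsilon}|\phi_t|^2$ into Proposition \ref{norm.equiv5} finishes the proof. As always I would first carry out the argument for the shifted solution $w(s,x)=\phi(s+\delta,x)$ with $\delta=(T-T_\alpha)/4$ so that $T-2\delta>T_\alpha$, then transfer back via energy conservation (Lemma \ref{energy}).

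The main obstacle is the careful tracking of the $\varepsilon$-powers in the absorption step: one must check that every auxiliary inequality (Poincar\'e on the $\varepsilon$-strip, the cut-off bounds $|h_x|\le C/\varepsilon$, the lower bound on $x^\alpha$) contributes exactly the powers needed so that, after absorption, the constant in front of $\frac{1}{\varepsilon^3}\iint_{Q_\varepsilon}|\phi_t|^2$ stays bounded uniformly in $\varepsilon\in(0,\varepsilon_0)$ and the leftover $|\phi|^2$ term comes with a coefficient $o(1)$ as $\varepsilon\to 0$. A secondary technical point is that $x^{\alpha-1}$ and $x^{\alpha-2}$ are only bounded on $[1-\varepsilon_0,1]$ for $\alpha\in(0,1)$ (they are automatically bounded for $\alpha\in[1,2)$), which is precisely why the hypothesis $\varepsilon_0\in[0,1)$ is needed and why the constant is allowed to depend on $\varepsilon_0$; this mirrors the case distinction already made in the proof of Proposition \ref{norm.equiv4}.
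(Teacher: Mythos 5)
Your reduction is the same as the paper's first step (both invoke Proposition \ref{norm.equiv5} and reduce everything to removing the zeroth-order term $\frac{1}{\varepsilon^3}\int_0^T\int_{1-\varepsilon}^1|\phi|^2\,dx\,dt$ from the right-hand side), but the way you propose to remove it does not work, and the failure is structural rather than a matter of bookkeeping. Track the powers of $\varepsilon$ in your own chain: the Poincar\'e inequality on the strip of width $\varepsilon$ gives a factor $\varepsilon^{2}$, and the Caccioppoli-type estimate \eqref{v2} gives a factor $\varepsilon^{-2}$; these cancel exactly, so what you actually obtain is
\begin{equation*}
\frac{1}{\varepsilon^3}\int_0^T\int_{1-\varepsilon}^1|\phi|^2\,dx\,dt\;\le\;\frac{C}{\varepsilon^3}\int_0^T\int_{1-\varepsilon}^1|\phi_t|^2\,dx\,dt\;+\;\frac{C}{\varepsilon^3}\int_0^T\int_{1-\varepsilon}^1|\phi|^2\,dx\,dt,
\end{equation*}
not the $\frac{C}{\varepsilon}\int_0^T\int_{1-\varepsilon}^1|\phi|^2$ you wrote in the penultimate display. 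The leftover term has the same weight $\varepsilon^{-3}$ as the left-hand side and its constant $C$ (coming from the cut-off and from $T$) is not small, so there is no absorption, for any $\varepsilon$. Nor can this be repaired by sharpening the auxiliary inequalities: the Poincar\'e constant $\varepsilon^2$ on an interval of length $\varepsilon$ and the Caccioppoli constant $\varepsilon^{-2}$ produced by a cut-off of slope $1/\varepsilon$ are both optimal, so the product is genuinely $O(1)$. (There is also a secondary issue: \eqref{v2} controls $\int\int\sigma x^\alpha|\phi_x|^2$ only where $\sigma$ is bounded below, i.e.\ on $[\delta,T-\delta]\times[1-\varepsilon/2,1]$, not on all of $Q_\varepsilon$; that part is fixable by enlarging the cut-off window, but the absorption failure is not.)

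The deeper point is that discarding the compact lower-order term $|\phi|^2$ from an observability inequality is a unique-continuation problem, not a local elliptic estimate, and the paper treats it as such. The actual proof argues by contradiction: it normalizes a hypothetical sequence of counterexamples $(\phi_n^0,\phi_n^1)$ with unit energy and $\frac{1}{\varepsilon_n^3}\int_0^T\int_{1-\varepsilon_n}^1|\phi_{nt}|^2<\frac1n$, extracts weak limits, applies Theorem \ref{th3.4} to the sequence $(\phi_{nt})$ together with Propositions \ref{prop-trace} and \ref{ThObsZhang} to conclude that the weak limit $\phi$ vanishes identically, and then runs a delicate argument with the time-antiderivatives $\psi_n$ (establishing first a pointwise-in-$t$ bound \eqref{lema3.5}, then the convergence of $\frac{1}{\varepsilon_{n_k}^3}\int_{1-\varepsilon_{n_k}}^1|\psi_{n_k}(t,\cdot)|^2$ to a constant $I=0$) to upgrade the weak convergence to $\frac{1}{\varepsilon_{n_k}^3}\int_0^T\int_{1-\varepsilon_{n_k}}^1|\phi_{n_k}|^2\,dx\,dt\to0$, contradicting the normalization \eqref{eq3.4}. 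The shrinking of the observation region $\omega_{\varepsilon_n}$ is exactly what makes this compactness step nontrivial and is why Theorems \ref{th3.2-fabre92} and \ref{th3.4} are needed; none of this machinery is replaceable by the direct Poincar\'e--Caccioppoli loop you propose.
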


Arguing as in the proof of Proposition \ref{norm.equiv3}, we can see that Theorem \ref{th3.6} leads to Theorem \ref{th-F-3.4}. So that, we will next focus on the proof Theorem \ref{th3.6}.

\begin{proof}
    The desired inequality will be proved by contradiction. Assuming that the result is false, for each $n\in \mathbb N$, there exist two sequences $(\varepsilon_n )_{n=1}^{\infty}$ in  {$(0,1)$}, converging to zero, and $(\bar{\phi}^0_n,\bar{\phi}^1_n)_{n=1}^{\infty}$ in $H_\alpha^1\times L^2(0,1)$ such that 
    \begin{equation}\label{eq3.3}        \|\bar{\phi}^0_n\|^2_{H_\alpha^1}+\|\bar{\phi}^1_n\|^2_{L^2(0,1)}>n\left[\frac{1}{\varepsilon_n^3}\int_0^T\int_{1-{\varepsilon_n}}^1|\bar{\phi}_{nt}|^2\,dx\,dt\right],
    \end{equation}
    for each $n\in \mathbb N$.
    This implies $\bar{\phi}_n\neq0$ and allows us to define

    \[\phi^0_n=\frac{\bar{\phi}^0_n}{\sqrt{\|\bar{\phi}^0_n\|^2_{H_\alpha^1}+\|\bar{\phi}^1_n\|^2_{L^2(0,1)}}}, \ \ \phi^1_n=\frac{\bar{\phi}^1_n}{\sqrt{\|\bar{\phi}^0_n\|^2_{H_\alpha^1}+\|\bar{\phi}^1_n\|^2_{L^2(0,1)}}}\]
    and
    \[\phi_n=\frac{\bar{\phi}_n}{\sqrt{\|\bar{\phi}^0_n\|^2_{H_\alpha^1}+\|\bar{\phi}^1_n\|^2_{L^2(0,1)}}}.\]

    Observe that $\phi_n$ is the solution of \eqref{2.3} with the initial data $(\phi^0_n , \phi^1_n ) \in H_\alpha^1\times L^2(0,1)$, with
    \begin{equation}\label{eq3.4}
        \|\phi^0_n\|^2_{H_\alpha^1}+\|\phi^1_n\|^2_{L^2(0,1)}=1
    \end{equation}and\begin{equation}\label{eq3.5}
        \frac{1}{\varepsilon_n^3}\int_0^T\int_{1-\varepsilon_n}^1|\phi_{nt}|^2\,dx\,dt<\frac{1}{n}.
    \end{equation}
    {At this point, we recall that Proposition} \ref{norm.equiv5} {assures that}

\begin{equation}\label{completo}
\|\phi^0_{n}\|^2_{H^1_\alpha}+\|\phi^1_{n}\|^2_{L^2(0,1)}\leq \frac{C}{\varepsilon_n^3}\int_0^T\int_{1-\varepsilon_n}^1 (|\phi_{nt}|^2+|\phi_{n}|^2 )\,dx\,dt.
\end{equation}
{Taking into account} \eqref{eq3.5} end \eqref{completo}, {if we prove that, up to subsequences, the convergence}

\begin{equation}\label{f1} \frac{1}{\varepsilon^3_{n}}\int_0^T\int_{1-\varepsilon_{n}}^1|\phi_{n}(t,x)|^2\,dx\,dt\to0, \emph{ as } n\to +\infty ,
\end{equation}
{holds, then we will achieve a conclusion that contradicts} \eqref{eq3.4}.

    {The remainder of this proof is devoted to check that, up to subsequences,} \eqref{f1} {can really be obtained. To do so, several steps will be necessary.} Firstly, from \eqref{eq3.4}, we deduce that there exists $(\phi^0,\phi^1)\in H_\alpha^1\times L^2(0,1)$ such that $\phi^0_n\rightharpoonup\phi^0$ in $H_\alpha^1$ and $\phi^1_n\rightharpoonup\phi^1$ in $L^2(0,1)$. Furthermore, 
\begin{equation}\label{eq3.5.0}
\phi_n \stackrel{\ast}{\rightharpoonup} \phi \ \ \text{in} \ \ L^\infty(0,T;H_\alpha^1) \ \ \mbox{and} \ \ \phi_{nt}\stackrel{\ast}{\rightharpoonup}\phi_t \ \ \text{in} \ \ L^\infty(0,T;L^2(0,1)),
    \end{equation}where $\phi$ is the solution of \eqref{2.3} with the data $\phi^0$ and $\phi^1$. 


Now, we can apply Theorem \ref{th3.4} to the sequence $(\phi_{nt})$ to deduce that $\phi_{tx}(\cdot,1)\in L^2(0,T)$ and, from \eqref{eq3.5}, we get
\[
\frac{1}{3}\int_0^T|\phi_{tx}(t,1)|^2\,dt\leq\liminf \frac{1}{\varepsilon_n^3}\int_0^T\int_{1-\varepsilon_n}^1|\phi_{nt}|^2\,dx\,dt=0.\] 
From Proposition \ref{B3}, $v:=\phi_t\in L^\infty(0,T;L^2(0,1))$ is a very weak solution for \eqref{2.3} with data $v^0=\phi_t(0,\cdot)$ and $v^1=\phi_{tt}(0,\cdot)$. On the other hand, $v_x(\cdot,1)=\phi_{tx}(\cdot,1)=0\in L^2(0,T)$ and we can apply Proposition \ref{reg-ND} to conclude that $v\in L^\infty(0,T;H_\alpha^1)$ is a weak solution for \eqref{2.3}. Hence, Proposition \ref{norm.equiv2} leads to $v^0=v^1=0$, i.e. $\phi_t(0,\cdot)=\phi_{tt}(0,\cdot)=0$. 
 

Furthermore, from \eqref{dual-prod}, we have
\[\dpa{\phi^0}{\xi}=-\int_0^1(x^\alpha \phi_x)_x(0,x)\xi(x)\,dx=-\int_0^1\phi_{tt}(0,x)\xi(x)\,dx=0, \ \forall\xi\in H_\alpha^1.\] and this implies that $\phi^0=0$. Therefore, $\phi=0$, since it is a solution of \eqref{2.3} with initial data  $\phi^0=\phi^1=0$.

Now, from the identity 
\[\phi_n(r,x)=\phi_n(t,x)+\int_t^r\phi_{nt}(s,x)\,ds,\]
we deduce
\begin{equation*}
\begin{split}
|\phi_n(r,x)|^2 & =|\phi_n(t,x)|^2+2\phi_n(t,x)\int_t^r\phi_{nt}(s,x)\,ds+\left(\int_t^r\phi_{nt}(s,x)\,ds\right)^2\\
& \geq |\phi_n(t,x)|^2+2\phi_n(t,x)\int_t^r\phi_{nt}(s,x)\,ds,
\end{split}
\end{equation*}
which leads us to 
\begin{multline*}
\frac{1}{\varepsilon^3_n}\int_0^T\int_{1-\varepsilon_n}^1|\phi_n(r,x)|^2\,dx\,dr\\ \geq \frac{T}{\varepsilon^3_n}\int_{1-\varepsilon_n}^1|\phi_n(t,x)|^2\,dx+\frac{2}{\varepsilon^3_n}\int_0^T\int_{1-\varepsilon_n}^1 \phi_n(t,x)\left(\int_t^r\phi_{nt}(s,x)\,ds\right)\,dx\,dr.
\end{multline*}

Using Theorem \ref{th3.2-fabre92} and \eqref{eq3.4} we have 
\begin{equation}
    \label{eq3.6}\frac{T}{\varepsilon^3_n}\int_{1-\varepsilon_n}^1|\phi_n(t,x)|^2\,dx\leq C+\frac{2}{\varepsilon_n^3}\left|\int_0^T\int_{1-\varepsilon_n}^1\phi_{n}(t,x)\left(\int_t^r\phi_{nt}(s,x)\,ds\right)\,dx\,dr\right|.
\end{equation}

 It remains to estimate the second term of the right-hand side. Using Young's inequality with $\delta$, 
\begin{equation*}
\begin{split}
& \frac{2}{\varepsilon_n^3} \left|\int_0^T\int_{1-\varepsilon_n}^1\phi_n (t,x)\left(\int_t^r\phi_{nt} 
(s,x)\,ds\right)\,dx\,dr\right|\\
&\qquad \leq \frac{\delta }{\varepsilon_n^3}\int_{1-\varepsilon_n}^1|\phi_n(t,x)|^2\,dx+\frac{1}{4\delta\varepsilon_n^3}\int_{1-\varepsilon_n}^1\left(\int_0^T\int_t^r\phi_{nt}(s,x)\,ds\,dr\right)^2\,dx\\
&\qquad\leq  \frac{\delta}{\varepsilon_n^3}\int_{1-\varepsilon_n}^1|\phi_n(t,x)|^2\,dx+\frac{T^3}{4\delta\varepsilon_n^3}\int_{1-\varepsilon_n}^1\int_0^T|\phi_{nt}(s,x)|^2\,ds\,dx.
\end{split}
\end{equation*} 
Returning to \eqref{eq3.6}, taking $\delta=T/2$  and using \eqref{eq3.5}, we obtain \[\frac{T}{2\varepsilon_n^3}\int_{1-\varepsilon_n}^1|\phi_n (t,x)|^2\,dx\leq C+\frac{T^2}{2\varepsilon_n^3}\int_{1-\varepsilon_n}^1\int_0^T|\phi_{nt}(s,x)|^2\,dx\,ds\leq C+\frac{T^2}{2n\delta},
\]
which means that there exists $C=C(T,\alpha)>0$ such that
\begin{equation}
    \label{lema3.5}\frac{1}{\varepsilon_n^{3}} \int_{1-\varepsilon_n}^1 |\phi_n(t,x)|^2\,dx\leq C, \ \ \forall n\in\mathbb{N} \ \ \mbox{and} \ \ \forall t\in[0,T].
\end{equation}

Next, for any $n\in\mathbb{N}$ let us take $S_n\in H_\alpha^2$ satisfying $(x^\alpha S_{nx})_x=\phi_n^1$ and define 
\[
\psi_n(t,x)=\int_0^t\phi_n(s,x)\,ds+S_n.
\] 
Observe that $\psi_n$ is the solution of \eqref{2.3} with the initial data $(\psi^0_n ,\psi^1_n ):=(S_n, \phi^0_n )\in H_\alpha^2 \times H_\alpha^1$. On the other hand, recalling that \eqref{eq3.5.0} and $\phi=0$, we know that \[\phi^0_n\rightharpoonup0 \ \mbox{in} \ H_\alpha^1 \ \ \mbox{and} \ \ \phi^1_n\rightharpoonup0 \ \mbox{in} \ L^2(0,1).\] 
Since the embeddings $H_\alpha^1\hookrightarrow L^2(0,1)\hookrightarrow H_\alpha^{-1}$ are compact, passing to a subsequence if necessary, we have $\psi_{nt}(0)=\phi^0_n\to0$ in $L^2(0,1)$ and $\psi_n(0)=S_n\to0$ in $H_\alpha^1$. 
As a consequence, Proposition \ref{well-pos} implies the convergences
\begin{equation}
    \label{eq3.7}
    \psi_n \to0 \ \mbox{in} \ C^0([0,T];H_\alpha^1) \ \ \mbox{and} \ \ \psi_{nt}\to0 \ \mbox{in} \ C^0([0,T];L^2(0,1)).
\end{equation}

At this point, arguing as in the obtainment of \eqref{lema3.5},  we can conclude that \begin{equation}\label{eq3.7.0}
     \frac{1}{\varepsilon^3_n}\int_{1-\varepsilon_n}^1|\psi_n(t,x)|^2\,dx\leq C, \ \forall n\in\mathbb{N} \ \ \mbox{and} \ \ \forall t\in[0,T].
\end{equation}
   
In particular, for $t=0$, there exist an increasing sequence $(n_k)_{k=1}^{\infty}$ of positive integers and a real number $I\geq0$ such that 
\begin{equation}\label{eq3.8}
   \frac{1}{\varepsilon^3_{n_k}}\int_{1-\varepsilon_{n_k}}^1|\psi_{n_k}(0,x)|^2\,dx\to I, \emph{ as } k\to +\infty.
\end{equation}

In what follows, we will conclude that the convergence \eqref{eq3.8} actually holds for all $t\in[0,T]$. Indeed, integrating by parts, we have
\begin{multline}\label{eq3.9}
    \frac{2}{\varepsilon^3_{n_k}}\int_0^t\int_{1-\varepsilon_{n_k}}^1\psi_{n_k}(s,x)\phi_{n_k}(s,x)\,dx\,ds=\frac{1}{\varepsilon^3_{n_k}}\int_{1-\varepsilon_{n_k}}^1\int_0^t(|\psi_{n_k}|^2)_s\,ds\,dx\\ =\frac{1}{\varepsilon^3_{n_k}}\int_{1-\varepsilon_{n_k}}^1|\psi_{n_k}(t,x)|^2\,dx-\frac{1}{\varepsilon^3_{n_k}}\int_{1-\varepsilon_{n_k}}^1|\psi_{n_k}(0,x)|^2\,dx.
\end{multline}
Furthermore, from H\"older inequality\begin{multline*}
    \left|\frac{1}{\varepsilon^3_{n_k}}\int_0^t\int_{1-\varepsilon_{n_k}}^1\psi_{n_k}(s,x)\phi_{n_k}(s,x)\,dx\,ds\right|\leq \left(\frac{1}{\varepsilon^3_{n_k}}\int_0^t\int_{1-\varepsilon_{n_k}}^1|\psi_{n_k}(s,x)|^2\,dx\,ds\right)^{1/2}\\ \left(\frac{1}{\varepsilon^3_{n_k}}\int_0^t\int_{1-\varepsilon_{n_k}}^1|\phi_{n_k}(s,x)|^2\,dx\,ds\right)^{1/2}.
\end{multline*}
On the other hand, from \eqref{lema3.5}, we get 
\[
\frac{1}{\varepsilon^3_{n_k}}\int_0^t\int_{1-\varepsilon_{n_k}}^1|\phi_{n_k}(s,x)|^2\,dx\,ds\leq Ct\leq CT\leq C
\] 
and, applying Theorem \ref{th3.2-fabre92}, we have
\begin{multline}\label{eq3.9.0}
    \frac{1}{\varepsilon^3_{n_k}}\int_0^t\int_{1-\varepsilon_{n_k}}^1|\psi_{n_k}(s,x)|^2\,dx\,ds\leq \frac{1}{\varepsilon^3_{n_k}}\int_0^T\int_{1-\varepsilon_{n_k}}^1|\psi_{n_k}(s,x)|^2\,dx\,ds \\ \leq C\left(\|\psi^0_{n_k}\|^2_{H_\alpha^1}+\|\psi^1_{n_k}\|^2_{L^2(0,1)}\right)\to 0.
\end{multline}
As a consequence,
\[\frac{2}{\varepsilon^3_{n_k}}\int_0^t\int_{1-\varepsilon_{n_k}}^1\psi_{n_k}(s,x)\phi_{n_k}(s,x)\,dx\,ds\to 0, \ \forall t\in[0,T],\]
which combined with \eqref{eq3.8} and \eqref{eq3.9}, allows us to conclude that \begin{equation}
    \label{lema3.6}
   \frac{1}{\varepsilon^3_{n_k}}\int_{1-\varepsilon_{n_k}}^1|\psi_{n_k}(t,x)|^2\,dx\to I, \ \forall t\in[0,T].
\end{equation}

We claim that $I=0$. Indeed, for the sake of simplicity  consider the sequence of functions 
\[
f_k(t)=\frac{1}{\varepsilon^3_{n_k}}\int_{1-\varepsilon_{n_k}}^1|\psi_{n_k}(t,x)|^2\,dx.
\] 
Clearly, from \eqref{lema3.6} and  \eqref{eq3.7.0}, $f_k(t)\to I$, as $k\to +\infty$, and $|f_k(t)|\leq C, \ \forall k\in\mathbb{N}$, where $t\in[0,T]$. So that, using  Lebesgue's Dominated Convergence Theorem and \eqref{eq3.9.0}, we deduce that \[IT=\int_0^TI\,dt=\displaystyle\lim_{k\to +\infty} \int_0^Tf_k(t)\,dt=\lim_{k\to +\infty} \frac{1}{\varepsilon^3_{n_k}}\int_0^T\int_{1-\varepsilon_{n_k}}^1|\psi_{n_k}(s,x)|^2\,dx\,ds= 0,
\]
therefore,  $I=0$.

With all this information that we have collected about the sequence $(\psi_{n_k})$ we can turn our attention back to $(\phi_{n_k})$. 
Integrating by parts, we obtain \begin{multline*}
    \frac{1}{\varepsilon^3_{n_k}}\int_0^T\int_{1-\varepsilon_{n_k}}^1\psi_{n_k}(t,x)\phi'_{n_k}(t,x)\,dx\,dt\\ =\frac{-1}{\varepsilon^3_{n_k}}\int_0^T\int_{1-\varepsilon_{n_k}}^1|\phi_{n_k}(t,x)|^2\,dx\,dt+\frac{1}{\varepsilon^3_{n_k}}\int_{1-\varepsilon_{n_k}}^1\psi_{n_k}(\cdot,x)\phi_{n_k}(\cdot,x)|^{t=T}_{t=0}\,dx\,dt.
\end{multline*}

Using H\"older inequality, \eqref{lema3.5} and \eqref{lema3.6} we can see that the last term on the right side goes to 0. The term on the left side also goes to 0, just have in mind  \eqref{eq3.5} and \eqref{eq3.9.0}. Hence, 
\begin{equation*}
    \frac{1}{\varepsilon^3_{n_k}}\int_0^T\int_{1-\varepsilon_{n_k}}^1|\phi_{n_k}(t,x)|^2\,dx\,dt\to0
\end{equation*}As we mention in \eqref{f1}, it completes the proof.
\end{proof}

\newpage
\appendix

\renewcommand{\thesection}{\Alph{section}}

\appendix
\renewcommand{\thesection}{\Alph{section}}
\section{Proof of the well-posedness results}\label{app-well-pos}

In this appendix, we will provide the proofs of the well-posedness results for weak and very weak solutions, namely Propositions \ref{well-pos} and \ref{well-pos-trans}. For the reader's convenience, we will restate them.

\begin{prop}\label{well-pos-app}
	Given  $f\in L^1(0,T;L^2(0,1))$ and $(y^0,y^1)\in H^1_\alpha\times L^2(0,1)$, there exists a unique weak solution $y\in C^0([0,T];H^1_\alpha)\cap C^1([0,T];L^2(0,1))$ of \eqref{pb1}.	In addition, there exists a positive constant $C=C(T,\alpha)$ such that
\begin{equation}\label{ineq1-app}
\sup_{t\in[0,T]}\left( \n{y_t(t)}{L^2(0,1)}^2+\n{y(t)}{H_\alpha^1}^2 \right) 
\leq C\left(\n{f}{L^1(0,T;L^2(0,1))}^2+\n{y^0}{H_\alpha^1}^2 +\n{y^1}{L^2(0,1)}^2\right).    
\end{equation}
\end{prop}

\begin{proof} 
 This result was established in \cite{cannarsa2015wavecontrol}, for the homogeneous case,  using a semigroup approach.  We will use the same approach to obtain a mild solution in the sense of semigroups. In the following, we will show that a mild solution is also a weak solution in the sense of Definition \ref{weak}.

    Firstly, let us consider the Hilbert space $X=L^2(0,1)\times H_\alpha^{-1}$, with the inner product
    \begin{equation*}
\left((u_1,\xi_1),(u_2,\xi_2)\right)_X =\pl{u_1}{u_2}+\iphad{\xi_1}{\xi_2},
 \end{equation*}
 where $\iphad{\cdot}{\cdot}$ is given in \eqref{inp-dual}. Also, consider $D(B)=H^1_\alpha\times L^2(0,1)$ and the unbounded linear operator $B:D(B)\subset X\to X$ given by \[B(u,v)=(v,(x^\alpha u_x)_x).\]

    A straightforward calculation shows that $(B(U),U)_X=0, \ \forall U\in D(B)$ and this leads us to conclude that $B$ and $-B$ are dissipative operators. For any  $F\in X$, from Lax-Milgran Theorem, we can prove that $U-BU=F$ has a solution in $D(B)$, which implies that $B$ is $m-$dissipative. The same is also true for $-B$. Hence, $B$ is a skew-adjoint operator. Thereofre, the semigroup theory guarantees that $B$ is the generator of an isometry group $(S(t))_{t\in\mathbb{R}}$ in $X$.

    For $U_0=(y^0,y^1)\in D(B)$, from \cite[Theorem 3.2.3]{cazenave1998}, the function $V:\mathbb{R}\to X$ given by $V(t)=S(t)U_0$ fulfills the following properties:
    \[\begin{cases}
        V\in C(\mathbb{R},D(B))\cap C^1(\mathbb{R};X);\\
        V'(t)=BV(t), \ \forall t\in\mathbb{R};\\
        \|V(t)\|_X=\|U_0\|_X \ \forall t\in\mathbb{R};\\
        V(0)=U_0 .
    \end{cases}.\]
    
    Now, for $f\in L^1(0,T;L^2(0,1))$, we have $F=(0,f)\in L^1(0,T;D(B))$. Hence, for a fixed $t\in(0,T]$, we can define $R(s)=S(t-s)F(s)$, for $0\leq s\leq t$, to obtain $R\in L^1(0,t;D(B))$. So that, the functional \[t\longmapsto\int_0^tR(s)\,ds\]lies in $W^{1,1}(0,T;D(B))\hookrightarrow C([0,T];D(B))$. From \cite[Proposition 4.1.9]{cazenave1998},  we conclude that Duhamel's Formula \[U(t)=S(t)U_0+\int_0^tS(t-s)F(s)\,ds\] defines a function in $C([0,T];D(B))$ that solves the problem \[\begin{cases}
	U\in L^1(0,T;D(B))\cap W^{1,1}(0,T;X);\\
	{\color{black}U_t=BU+F} \ \ \mbox{a.e. in } [0,T]; \\
	U(0)=U_0.
	\end{cases}\]  
    
    Finally, let us write  $U(t)=(u(t),v(t))$ and prove that $u$ is a weak solution in the sense of Definition \ref{weak}. Since $U\in C([0,T];D(B))$, it follows that $u\in C([0,T],H_\alpha^1)$ and $v\in C([0,T];L^2(0,1))$. Furthermore, it is clear that $u_t=v$, whence $u\in C^1([0,T];L^2(0,1))$, $u(0)=y^0$ and $u_t(0)=y^1$. It remains to show that \eqref{form-int2} holds for $u$. Indeed, let us take $\phi\in \mathcal{D}(0,T)$ and $w\in H_\alpha^1$, and define $\Phi=(-w\phi',w\phi)\in\mathcal{D}(0,T;D(B))$. Hence,

\[
\int_0^T\left(U_t(t),\Phi(t)\right)_X\,dt=\int_0^T\left(BU(t),\Phi(t)\right)_X \,dt+\int_0^T\left(F(t),\Phi(t)\right)_X\,dt 
\]
and this leads us to
{\color{black}
\begin{multline*}
 \int_0^T\left[-\pl{u_t}{w}\phi'+\iphad{v_t}{w}\phi\right]\,dt\\
 =\int_0^T
 \left[-\pl{v}{w}\phi'+\iphad{(x^\alpha u_x)_x}{w}\phi\right]\,dt
 + \int_0^T\iphad{f}{w}\phi\,dt.
\end{multline*}
}

Since $v\in L^1(0,T,L^2(0,1))$ and $v_t\in L^1(0,T;\had)$, we can perform an integration by parts with respect to $t$ in the second integral, achieving that $u$ verifies \eqref{form-int2}.    
\end{proof}

\begin{cor}\label{corA2}
If \(f\equiv 0\) in Proposition \ref{well-pos-app}, then \(y_{tt}\in C^0([0,T];\had) \) and there exists a positive constant $C=C(T,\alpha)$ such that
\begin{equation}\label{ineq1-app2}
\sup_{t\in[0,T]}\left( \n{y_{tt}(t)}{\had}^2\right) 
\leq C\left(\n{y^0}{H_\alpha^1}^2 +\n{y^1}{L^2(0,1)}^2\right).    
\end{equation}
\end{cor}
 
\begin{proof} {As seen in the previous proof, with \mbox{$f=0$} we have that \mbox{$U_t=BU$} a.e. in \mbox{$[0,T]$} and this lead us to \mbox{\(y_{tt}(t)=(x^\alpha y_x)_x(t)\)} a.e. in \mbox{$[0,T]$}.  Since \mbox{$y\in C^0([0,T],\ha)$}, it follows that \mbox{\(y_{tt}\in C^0([0,T],\had)\)} }.  Besides that,
\begin{equation*}
\left|\dpa{y_{tt}(t,\cdot)}{w}\right|= \left|\pl{x^{\alpha/2} y_x(t,\cdot)}{x^{\alpha/2}w_x}\right|\leq \n{y(t,\cdot)}{\ha}\n{w}{\ha}, \ \forall w\in \ha.
\end{equation*}
Therefore, inequality \eqref{ineq1-app} implies
\begin{equation*}
\sup_{t\in[0,T]}\left( \n{y_{tt}(t)}{\had}^2\right)\leq \sup_{t\in[0,T]}\left( \n{y(t)}{\ha}^2\right)\leq   C\left(\n{y^0}{H_\alpha^1}^2 +\n{y^1}{L^2(0,1)}^2\right).
\end{equation*}
\end{proof}

\begin{prop}\label{well-pos-trans-app}
	Given  $f\in L^1(0,T;L^2(0,1))$ and $(y^0,y^1)\in L^2(0,1)\times H^{-1}_\alpha$, there exists a unique solution by transposition  $y\in C^0([0,T];L^2(0,1))\cap C^1([0,T];H^{-1}_\alpha)$ of \eqref{pb1}.	In addition, there exists a positive constant $C=C(T,\alpha)$ such that
	\begin{equation}\label{ineq-2.3-app}
 \begin{split}
& 	\sup_{t\in[0,T]}\left( \n{y(t)}{L^2(0,1)}^2+\n{y_t(t)}{H_\alpha^{-1}}^2 \right) \\
& \qquad	\leq C\left(\n{f}{L^1(0,T;L^2(0,1))}^2+\n{y^0}{L^2(0,1)}^2+\n{y^1}{H_\alpha^{-1}}^2 \right).
 \end{split}
	\end{equation}
\end{prop}

\begin{proof}
    Let us fix $(y^0,y^1)\in L^2(0,1)\times H_\alpha^1$ and $f\in L^1(0,T;L^2(0,1))$. In order to save notation, we set 
\[
E_0:=\left(\|y^0\|^2_{L^2(0,1)} +\|y^1\|^2_{H_\alpha^{-1}} +\|f\|^2_{L^2(0,T;L^2(0,1))}\right)^\frac{1}{2}.
\] 
From Theorem \ref{well-pos}, we can define a continuous linear functional  $\Gamma:L^1(0,T;L^2(0,1))\to\mathbb{R}$ given by
\[
\Gamma(F)=-(y^0,\theta_t(0,\cdot))_{L^2(0,1)}+\dpa{y^1}{\theta(0,\cdot )}+\intq f\theta\dd,
\]
where $\theta\in C([0,T];H_\alpha^1)\cap C^1([0,T]; L^2(0,1))$ is the weak solution of \eqref{back-pb} with $g=F$. Furthermore, we have 
\[
\n{\Gamma}{(L^1(0,T);L^2(0,1))'}\leq CE_0.
\]
From the Riez Representation Theorem, there exists a unique $y\in L^\infty(0,T;L^2(0,1))$, with $\|y\|_{L^\infty(0,T;L^2(0,1))}=\n{\Gamma}{(L^1(0,T);L^2(0,1))'}$, such that \[\Gamma(F)=\intq yF\dd,\]
for any $F\in L^1(0,T;L^2(0,1))$. This gives us the existence and uniqueness of a very weak solution for \eqref{pb1}. It remains to show the regularity and the energy estimates. We already have
\begin{equation}\label{tp1}
\n{y}{L^\infty(0,T;L^2(0,1))}\leq CE_0.
\end{equation}
Now, we will  obtain a similar  estimate to $y_t$. To do that, let us first define
\[W_0^{1,1}(0,T;H_\alpha^{1}):=\left\{u\in L^1(0,T;\ha);\ u_t\in L^1(0,T;\ha),\ u(0,\cdot)=u(T,\cdot)=0 \right\}\]
and denote by $W^{-1,\infty}(0,T;\had)$ its dual.
Hence, for any $v\in W_0^{1,1}(0,T;H_\alpha^{1})$, we have \[|\langle y_t,v\rangle|=\left|-\int_0^T(y,v_t)_{L^2(0,T;L^2(0,1))}\,dt\right|\leq \|y\|_{L^\infty(0,T;L^2(0,1))}\|v\|_{W_0^{1,1}(0,T;H_\alpha^1)}.\]
This gives us $y_t\in W^{-1,\infty}(0,T;H_\alpha^{-1})$ and \[\|y_t\|_{W^{-1,\infty}(0,T;H_\alpha^{-1})}\leq \|y\|_{L^\infty(0,T;L^2(0,1))}\leq CE_0.\]

Next, let us prove that  \(y_t\in L^{\infty} (0,T;H_{\alpha}^{-1})\), that is, there exists \(w \in L^{\infty} (0,T;H_{\alpha}^{-1})\) such that $y_t =w\vert_{W_0^{1,1}(0,T;H_\alpha^1)}$.
For each $v\in L^1(0,T;H_\alpha^1)$, we consider a sequence $(v_n)\subset W_0^{1,1}(0,T;H_\alpha^1)$ such that $v_n\to v$ in $L^1(0,T;H_\alpha^1)$. 

{Before we continue, we need to establish the following estimate:} \begin{equation}\label{semderivada}
    \displaystyle \|\theta_{nt} (0,\cdot )\|_{L^2 (0,1)} + \|\theta_n \|_{C([0,T];H_{\alpha}^{1})} \leq  C\|v_n\|_{L^{1} (0,T;H_{\alpha}^{1})},
\end{equation}
{where \mbox{$\theta_n$} is a weak solution of \mbox{\eqref{back-pb}}, with \mbox{$g=v_{nt}$}. To do that, we will follow the steps of \mbox{\cite[Lemma 4.3]{medeiros2013introduction}}. Let \mbox{$\varphi_n$} be  the strong solution of \mbox{\eqref{back-pb}} with \mbox{$g=v_n$}. We have that \mbox{$\varphi_{nt}$} must solve \mbox{\eqref{back-pb}} with \mbox{$g=v_{nt}$}. This implies \mbox{$\varphi_{nt}=\theta_n$}. Using the energy estimates for strong solutions we can get \mbox{\eqref{semderivada}.}}

Now, \eqref{semderivada} allows us to define the function $L: L^1 (0,T;H_{\alpha}^{1}) \rightarrow \R$ given by
\[
\displaystyle L(v)=\lim_{n\to +\infty} \langle y_t,v_n\rangle_{W^{-1,\infty}(0,T;H_\alpha^{-1}),W_0^{1,1}(0,T;H_\alpha^1)}.
\]
{Indeed, if $(v_n), ~(\tilde{v}_n)  \subset W_0^{1,1}(0,T; H^1_\alpha)$
satisfies $v_n, ~\tilde{v}_n \to v$ in $L^1(0, T; H_\alpha^1 )$, then 
$v_n -\tilde{v}_n \to 0 $ em $L^1(0, T; H_\alpha^1)$. Therefore, using}
\mbox{\eqref{semderivada}, we can deduce that}
\[\begin{split}
|\langle y_t, v_n- \tilde{v}_n\rangle_{W^{-1, \infty}(0,T; H^{-1}_\alpha), W^{1,1}_0(0, T; H^1_\alpha)}|
&=\left|\int\!\!\!\!\int_{Q}y(v_{nt}-\tilde{v}_{nt})\,dx\,dt\right|=|\Gamma(v_{nt}-\tilde{v}_{nt})| \\
&\hspace{-6cm}=\left|\langle y_1, \theta_n(0,\cdot)-\tilde{\theta}_n(0,\cdot)\rangle_{H_\alpha^{-1}\times H_\alpha^1} - (y_0, \theta_{nt}(0,\cdot)-\tilde{\theta}_{nt}(0,\cdot))_{L^2(0,1)}+\int\!\!\!\!\int_{Q} f (\theta_n-\tilde{\theta}_n) \,dxdt\right|\\ 
&\hspace{-6cm}\leq \|y_1\|_{H_\alpha^{-1}}\|\theta_{n}(0,\cdot)-\tilde{\theta}_n(0,\cdot)\|_{H_\alpha^1}^2+\|y_0\|_{L^2(0,1)}.\|\theta_{nt}(0,\cdot)-\tilde{\theta}_{nt}(0,\cdot)\|_{L^2(0,1)}\\ 
&\hspace{-5.5cm} +\|f\|_{L^1(0,T;L^2(0,1))}.\|\theta_n-\tilde{\theta}_n\|_{L^\infty(0,T;L^2(0,1))}\\ &\hspace{-6cm}\leq C(\|y_1\|_{H_\alpha^{-1}}+\|y_0\|_{L^2(0,1)}+\|f\|_{L^1(0,T;L^2(0,1))})\|v_n-\tilde{v}_n\|_{L^1(0,T;H_\alpha^1)}\to 0.
\end{split}
    \]
{Hence, }
\[
\lim_{n \to \infty} \langle y_t, v_n\rangle_{W^{-1, \infty}(0,T; H^{-1}_\alpha), W^{1,1}_0(0, T; H^1_\alpha)}= \lim_{n \to \infty} \langle y_t, \tilde{v}_n\rangle_{W^{-1, \infty}(0,T; H^{-1}_\alpha), W^{1,1}_0(0, T; H^1_\alpha)}
\]
{and this means that $L$ is well defined.}

Moreover, applying \eqref{semderivada}, we see that
\begin{equation*}
    \displaystyle|\langle y_t,v_n\rangle| = |\Gamma (v_{nt})| \leq C \|v_n \|_{L^1 (0,T;H_{\alpha}^{1})} E_0,
\end{equation*}
which means that $L\in (L^{1} (0,T; H_{\alpha}^{1}))'$ and $\|L\|_{L^{\infty} (0,T;H_{\alpha}^{-1})} \leq CE_0$. Hence, there exists a unique $w\in L^{\infty} (0,T;H_{\alpha}^{-1})$ such that
\[
\displaystyle L(v)=\int_{0}^{T} \langle w(t,\cdot ), v(t,\cdot ) \rangle dt=:\langle w,v\rangle_{L^{\infty} (0,T;H_{\alpha}^{-1}),L^{1} (0,T;H_{\alpha}^{1})} 
\]
for any $v\in L^1 (0,T;H_{\alpha}^{1})$. In particular,
\[
\displaystyle \langle y_t,v\rangle_{W^{-1,\infty}(0,T;H_\alpha^{-1}),W_0^{1,1}(0,T;H_\alpha^1)}=\langle w,v\rangle_{L^{\infty} (0,T;H_{\alpha}^{-1}),L^{1} (0,T;H_{\alpha}^{1})} 
\]
for any $v\in W_0^{1,1}(0,T;H_\alpha^1)$, that is, $y_t =w|_{W_0^{1,1}(0,T;H_\alpha^1)}$. In the sequel, we will also denote $w$ by $y_t$. In this case, 
\begin{equation}\label{tp2}\|y_t\|_{L^\infty(0,T;H_\alpha^{-1})}= \|L\|_{(L^1 (0,T;H_\alpha^{1}))'}
\leq CE_0.
\end{equation}
 

Now that we have the energy estimates \eqref{tp1} and \eqref{tp2}, it remains to obtain the regularity $y\in C([0,T];L^2(0,1))\cap C^1([0,T];H_\alpha^{-1})$. 
As before, let us take a sequence $(y^0_n,y^1_n)\in H_\alpha^1\times L^2(0,1)$ such that $y^0_n\to y^0$ in $L^2(0,1)$ and $y^1_n\to y^1$ in $H_\alpha^{-1}$. Let $y_n\in C([0,T];H_\alpha^1)\cap C^1([0,T];L^2(0,1))$  be the weak solution of \eqref{pb1}, with the initial data $(y^0_n,y^1_n)$. In particular $y_n$ is also a very weak solution of \eqref{pb1}, following that $y-y_n$ is a very weak solution of \eqref{pb1} with $f=0$ and initial data $(y^0-y^0_n,y^1-y^1_n)$. From what we have already proven,
\begin{equation}\label{pb3}
\begin{split}
 \|y-y_n\|_{L^\infty(0,T;L^2(0,1))}+\|y_t-y_{nt}\|_{L^\infty(0,T;H_\alpha^{-1})}\\
\leq C\left(\|y^0-y_n^0\|_{L^2(0,1)} +\|y^1-y_n^1\|_{H_\alpha^{-1}}\right).
\end{split}
\end{equation}
This leads us to $y_n\to y$ in $L^\infty(0,T;L^2(0,1))$ and $y_{tn}\to y_t$ in $L^\infty(0,T;H_\alpha^{-1})$. 

In order to prove that $y\in C([0,T];L^2(0,1))$, we just need to see that, for any $t,t_0\in [0,T]$, we have
\begin{align*}
 & \|y(t)-y(t_0)\|_{L^2(0,1)}\\
 &\leq \|y(t)-y_{n}(t)\|_{L^2(0,1)}+\|y_n(t)-y_n(t_0)\|_{L^2(0,1)}+\|y_n(t_0)-y(t_0)\|_{L^2(0,1)}.\\
 & \leq 2\|y-y_n\|_{L^\infty(0,T;L^2(0,1))}+\|y_n(t)-y_n(t_0)\|_{L^2(0,1)}.
\end{align*}
Therefore, taking $n$ sufficiently large and using that $y_n\in C([0,T];L^2(0,1))$, we have the desired result.  Similarly, we deduce that $y_t\in C([0,T];H_\alpha^{-1})$ and finish the proof.
\end{proof}

\section{Properties of the solutions by transposition}\label{app-trans}

Given $(v^0,v^1)\in L^2(0,1)\times \had$, let $v$ be the solution by transposition (in the sense of Definition \ref{trans}) of the problem
\begin{equation}\label{pbA1}
    \begin{cases}
        v_{tt}-(x^\alpha v_x)_x=0, & (t,x)\in Q,\\
        v(t,1)=0, & t\in (0,T),\\
        \begin{cases}
            v(t,0)=0, & \text{ if } \alpha \in (0,1),\\
            \lim\limits_{x\to 0+}(x^\alpha v_x)(t,x)=0,&  \text{ if } \alpha \in [1,2)
        \end{cases}, & t\in (0,T)\\
        v(0,\cdot)=v^0, \ v_t(0,\cdot)=v^1. 
    \end{cases}
\end{equation}

\begin{prop}\label{propA1} If $v$ is the solution by transposition of \eqref{pbA1}, the following properties hold:
\begin{enumerate}[(a)]

    \item $v_{tt}-(x^\alpha v_x)_x=0$ in  $\D'(Q)$;
   \item $v_{tt}\in C^0([0,T];\habd)$;
    \item $v(0,\cdot)=v^0$ in $L^2$ and $v_t(0,\cdot )=v^1$ in $ {\had}$.
\end{enumerate}
\end{prop}

\begin{proof}
(a) Given $\theta \in \D(Q)$, we know that it is a weak solution of \eqref{back-pb}, with $g=\theta_{tt}-(x^\alpha \theta_x)_x$. In this case, 
$v$ is a solution by transposition and $\theta(0,\cdot)=\theta_t(0,\cdot)=0$, whence 
\[\intq v(\theta_{tt}-(x^\alpha \theta_x)_x)\dd=-\pl{v^0}{\theta_t(0,\cdot)}+\dpa{v^1}{\theta(0,\cdot)}=0.\]

On the other hand, since $v\in C^0(0,T;L^2(0,1))\hookrightarrow L^2(Q)$, we have $v_{tt}-(x^\alpha v_x)_x\in \D'(Q)$. Therefore,
\[\langle v_{tt}-(x^\alpha v_x)_x,\theta \rangle_{{D'(Q)},D(Q)}=\intq v\big(\theta_{tt}-(x^\alpha \theta_x)_x\big)\dd=0,\ \forall \theta \in \D(Q).\]

Additionally, observe that (b) is a consequence of (a), because  $v\in C^0([0,T];L^2(0,1))\cap C^1([0,T];\had)$, which gives us \textcolor{black}{$v_{tt}=(x^\alpha v_x)_x\in C^0([0,T];\habd)$.}

In order to prove (c), take $\xi(t,x)=\eta(t)\zeta(x)$, with $\eta\in H^2(0,T)$ satisfying $\eta(T)=\eta'(T)=0$, and $\zeta \in \haa$. Once again, since $v$ is a solution by transposition and $\xi$ is a weak solution of \eqref{back-pb}, with $g=\xi_{tt}-(x^\alpha \xi_x)_x=\eta''\zeta-\eta(x^\alpha \zeta')'$, we get
\begin{equation}\label{A1}
\intq v(\xi_{tt}-(x^\alpha \xi_x)_x)\dd=-\pl{v^0}{\zeta}\eta'(0)+\dpa{v^1}{\zeta}\eta(0).    
\end{equation}

Recalling that $v\in C^0([0,T];L^2(0,1))\cap C^1([0,T];\had)$ and  $v_{tt}\in C^0([0,T];\habd)$, we can see that

\begin{equation}\label{A2}
\begin{split}
& \intq v\xi_{tt}\dd  = \int_0^T \pl{v(t,\cdot)}{\zeta}\eta''\,dt\\
&\qquad =-\int_0^T \dpa{v_t(t, \cdot)}{\zeta}\eta'\,dt-\pl{v(0,\cdot)}{\zeta}\eta'(0)\\
&\qquad =\int_0^T \dpaa{v_{tt}(t,\cdot)}{\zeta}\eta\,dt+\dpa{v_t(0,\cdot)}{\zeta}\eta(0) -\pl{v(0,\cdot)}{\zeta}\eta'(0).
\end{split}    
\end{equation}
Also, 
\begin{equation}\label{A3}
\intq v(x^\alpha \xi_x)_x\dd = \int_0^T \dpaa{(x^\alpha v_x)_x(t,\cdot)}{\zeta}\eta\, dt. 
\end{equation}

Substituting \eqref{A2} and \eqref{A3} in \eqref{A1}, and taking into account the item (a),  we have 
\begin{equation*}
\displaystyle \begin{split}
& -\pl{v^0}{\zeta}\eta'(0)+\dpa{v^1}{\zeta}\eta(0)\\
& = \int_0^T \dpaa{v_{tt}(t,\cdot)-(x^\alpha v_x)_x(t,\cdot)}{\zeta}\eta\,dt+\dpa{v_t(0,\cdot)}{\zeta}\eta(0) -\pl{v(0,\cdot)}{\zeta}\eta'(0)\\
& =\dpa{v_t(0,\cdot)}{\zeta}\eta(0)-\pl{v(0,\cdot)}{\zeta}\eta'(0).
\end{split}
\end{equation*}
Now, we can choose $\eta(0)=1$ and $\eta'(0)=0$ to obtain $v_t(0,\cdot)=v^1$ in $\had$. Similarly, taking $\eta(0)=0$ and $\eta'(0)=1$, we get
\[\pl{v^0}{\zeta}=\pl{v(0,\cdot)}{\zeta},\ \forall \zeta \in \haa.\]
In particular, it is true for any $\zeta\in \D(0,1)$. Therefore, $v(0,\cdot)=v^0$ in $L^2(0,1)$, since $\D(0,1)$ is dense in $L^2(0,1)$.
\end{proof}

\begin{prop}[Lifting argument]\label{lifting}
 Let $v$ be the solution by transposition of \eqref{pbA1} with $(v^0,v^1)\in L^2(0,1)\times \had$ as initial data, and take $\xi \in \ha$ the solution of 
\begin{equation*}
    \begin{cases}
    (x^\alpha \xi_x)_x=v^1 &  \text{ in } (0,1),\\
       \xi(1)=0,\\
        \begin{cases}
        \xi(0)=0, & \alpha \in (0,1)\\
        \lim\limits_{x\to0^+}(x^\alpha \xi_x(x))=0, & \alpha \in [1,2),
        \end{cases}
    \end{cases}
\end{equation*}
 Then
\begin{equation*}
    V(t,x):=\int_0^t v(s,x)\,ds +\xi(x)
\end{equation*}
is a weak solution of \eqref{pbA1}, in the sense of Definition \ref{weak}, with initial data $(V_0,V_1)=(\xi,v_0)$.
\end{prop}
\begin{proof} Clearly, $V(0,x)=\xi(x)\in \ha$ and $V_t(0,x)=v(0,x)=v^0(x)\in L^2(0,1)$.

Let us see that 

\begin{equation}\label{assertion}
\displaystyle \left(x^\alpha V_x\right)_x(t)= \dps\int_0^t (x^\alpha v_x)_x(s)\, ds +v^1 \text{ in } H^{-2}_\alpha, \text{ for all } t\in [0,T].
\end{equation}
Indeed, since $v\in C^0([0,T];L^2(0,1))$, we have  
\[\left(x^\alpha v_x\right)_x,\ \left(x^\alpha V_x\right)_x\in C^0\left([0,T];\habd\right). \] 
Hence, given $z\in \haa$, we have
\begin{equation*}
\begin{split}
\dpaa{\left(x^\alpha V_x\right)_x(t)}{z}& = \pl{V(t)}{(x^\alpha z_x)_x}\\
& =\int_0^t \pl{v(s)}{(x^\alpha z_x)_x}\,ds +\pl{x^\alpha \xi_x}{z_x}\\
& =\int_0^t \dpaa{(x^\alpha v_x)_x(s)}{z}\,ds +\dpa{v^1}{z}\\
& =\dpaa{\int_0^t (x^\alpha v_x)_x(s)\,ds +v^1}{z}, \text{ for all } t\in [0,T],
\end{split}
\end{equation*}
following \eqref{assertion}.  Thus, from Proposition \ref{propA1}, we get
\begin{equation*}
\begin{split}
\left(x^\alpha V_x\right)_x(t)=\int_0^t (x^\alpha v_x)_x(s)\,ds +v^1
 = \int_0^t v_{tt}(s)\,ds +v^1 
 =v_{t}(t)=V_{tt}(t),\ \forall t\in [0,T].
\end{split}
\end{equation*}

Let us prove that $V$ is a solution by transposition. In fact, take $F\in \D(Q)$ and let  $\theta=\theta(t,x)$ be its associated solution of \eqref{back-pb}, with $g=F$.  Since $V_{tt}-(x^\alpha V_x)_x=0$ in $C^0([0,T];\habd)$ and $\theta\in C^\infty(Q)$, we have
\begin{equation*}
\begin{split}
0& =\int_0^T \dpaa{V_{tt}-(x^\alpha V_x)_x}{\theta}\,dt\\
& =\int_0^T \pl{V}{\theta_{tt}-(x^\alpha \theta_x)_x }\,dt-\dpa{V_t(0,\cdot)}{\theta(0,\cdot)}+\pl{V(0,\cdot)}{\theta_t(0,\cdot)}\\
& =\intq VF\dd -\dpa{\xi}{\theta(0,\cdot)}+\pl{v^0}{\theta_t(0,\cdot)}.
\end{split}
\end{equation*}

Since $\D(Q)$ is dense in $L^1(0,T;L^2(0,1))$, it also holds for $F\in L^1(0,T;L^2(0,1))$. Thus, $V$ is a solution by transposition of \eqref{pbA1} with initial data $(\xi,v^0)\in \ha\times L^2(0,1)$. Consequently, $V$ must be a weak solution of \eqref{pbA1}. This is due to the fact that, for the same initial data, there exists a weak solution $W\in C^0([0,T];\ha)\cap C^1([0,T];L^2(0,1))$ of \eqref{pbA1}, which is also a solution by transposition. By uniqueness, $V=W$.
\end{proof}

  
\begin{prop}\label{B3}
Let \(y\) be a weak solution of \eqref{pbA1}, in the sense of Definition \ref{weak}, with initial data 
\((y^0,y^1)\in \ha\times L^2(0,1)\). Then \(v=y_t\) is a solution by transposition of \eqref{pbA1} with the initial data 
{$(v^0,v^1):=(y^1,y_{tt}(0,\cdot))\in L^2(0,1)\times H_\alpha^{-1}$.}
\end{prop}

\begin{proof}
Firstly, we will prove that, for a more regular initial data $(y^0,y^1)\in \haa\times \ha$, \(v=y_t\) is a weak solution. 

Indeed, if $(y^0,y^1)\in \haa\times \ha$, we know that \(y\in C^0([0,T];\haa)\cap C^1([0,T];\ha)\cap C^2([0,T];L^2(0,1))\) is a strong solution of \eqref{pbA1}.  Then, we can see that \(v\in C^0([0,T];\ha)\cap C^1([0,T];L^2(0,1)) \) and has initial data
\[v^0:=v(0,\cdot)=y_t(0,\cdot)=y^1\in \ha \text{ and }  v^1:=v_t (0,\cdot)=y_{tt}(0,\cdot) \in L^2(0,1).\]
It remains to prove that $v$ satisfies identity \eqref{form-int2}. Since \(y\) is a strong solution, we know that \  \(y_{tt}=(x^\alpha y_x)_x\) a.e. in \(Q\). Multiplying by \(w\phi'\), where \(w\in \ha\) and \(\phi\in \D(0,T)\),  integrating over \(Q\) and  applying  integration by parts in the second term, we obtain
\begin{align*}
\intq v_{t}w\phi'\, dxdt& =-\intq x^\alpha y_x w_x \phi'\,dxdt\\
 & =\pl{\int_0^T -x^\alpha y_x\phi'\,dt}{w_x}.
\end{align*}
Since  \(y_t\in C^0([0,T];\ha)\), we have \(x^\alpha v_x =x^\alpha y_{tx}\in L^2(0,1)\). Hence, applying the definition of distributional derivative in the last term, we get 
\begin{align*}
\intq v_{t}w\phi'\, dxdt& =\pl{\int_0^T x^\alpha y_{tx}\phi\,dt}{w_x}\\
& =\int_0^T \pl{x^\alpha v_{x}}{w_x}\phi\,dt\\
& =\intq x^\alpha v_{x}w_x\phi\,dxdt,
\end{align*}
which is the identity \eqref{form-int2} with $f\equiv 0$, as required.

Now, since \(v\) is a weak solution, we know it is also a solution by transposition, which means that, given \(F\in L^1(0,T;L^2(0,1))\), \(v\) must satisfies
\begin{equation}\label{id-vws}
\intq vF\,dxdt =-\pl{v^0}{\theta_t(0,\cdot)}+\dpa{v^1}{\theta(0,\cdot )},
\end{equation}
where $\theta=\theta(t,x)$ is a weak solution of \eqref{back-pb} with $g=F$.

Finally, the result follows by the same density argument used in the end of Proposition \ref{norm.equiv2}. Indeed, if $(y^0,y^1)\in \ha\times L^2(0,1)$   {and $y\in C^0([0,T];H_\alpha^1)$ is the weak solution of \mbox{\eqref{pbA1}} associated with this data}, we can take a sequence $(y_n^0,y_n^1)_{n\in \mathbb{N}}\in \haa\times \ha$ such that 
\[y_n^0\to y^0 \text{ in } \ha, \ \ y_n^1\to y^1 \text{ in } L^2(0,1) \ 
  {\text{ and } y_n\to y \ \text{in} \ C^0([0,T];H_\alpha^1)\cap C^1([0,T],L^2(0,1))},\] 
  {where $y_n$ is the strong  solution of \mbox{\eqref{pbA1}} with the initial data $(y_n^0,y_n^1)$.} 
From Proposition \ref{well-pos}, we have 
\[v_n:=y_{nt}\to y_t \text{ 
 in } C^0([0,T];L^2(0,1)). \]
 From Corollary \ref{corA2} we can also see that 
\begin{equation*}
y_{ntt} \to y_{tt} \text{ in } C^0([0,T];\had).
\end{equation*}
  {If we put $v_n^0=v_n(0,\cdot)$ and $v_n^1=v_{nt}(0,\cdot)$, then 
$v_n^0=y_{nt}(0,\cdot)\to y_t(0,\cdot)=y^1$ in $L^2(0,1)$ and 
$v_n^1=y_{ntt}(0,\cdot)\to y_{tt}(0,\cdot)$ in $H_\alpha^{-1}$.
 Therefore, using the first part of this proof,  $v_n$ satisfies identity
\mbox{ \eqref{id-vws}}. In this case,  we can pass to the limit in \mbox{\eqref{id-vws}}, as $n\to 
 +\infty$, achieving that  $v=y_t$ is a solution by transposition of \mbox{\eqref{pbA1} }
 related to the initial data $(v^0,v^1):=(y^1,y_{tt}(0,\cdot))\in L^2(0,1)\times 
 H_\alpha^{-1}$.}.
\end{proof}

\section{{{Regularity of solutions by transposition with normal derivative in} }\(L^2(0,1)\)}\label{app-reg}

In Remark 3.1 of \cite{fabre1992exact,fabre1993behavior} and Remark 2.4 of  \cite{lions1988controlabilite}, the authors stated that, if \(v\) is a solution by transposition of the wave equation in $\Omega \subset \R^n$, with data $(v^0,v^1)\in L^2(\Omega)\times H^{-1}(\Omega)$, and the normal derivative $\partial_\nu v$ belongs to $L^2(\partial\Omega)$, then $(v^0,v^1)\in H_0^1(\Omega)\times L^2(\Omega)$, which means that $v$ is, in fact, a weak solution related to the initial data $(v^0,v^1)$.

The aim  of this section is to prove an analogous regularity result for the one-dimensional degenerate wave equation considered in this paper.

Fix \(\alpha\in (0,2)\). Given $(v^0,v^1)\in L^2(0,1)\times \had$, let $v$ be the solution by transposition (in the sense of Definition \ref{trans}) of the problem
\begin{equation}\label{pbC1}
    \begin{cases}
        v_{tt}-(x^\alpha v_x)_x=0, & (t,x)\in Q,\\
        v(t,1)=0, & t\in (0,T),\\
        \begin{cases}
            v(t,0)=0, & \text{ if } \alpha \in (0,1),\\
            \lim\limits_{x\to 0+}(x^\alpha v_x)(t,x)=0,&  \text{ if } \alpha \in [1,2)
        \end{cases}, & t\in (0,T)\\
        v(0,\cdot)=v^0, \ v_t(0,\cdot)=v^1. 
    \end{cases}
\end{equation}

\begin{prop}\label{reg-ND}
There exist \( \widetilde{T}_\alpha>0\) such that, for any \(T> \widetilde{T}_\alpha\) and \(v\) be the solution by transposition of \eqref{pbC1} with initial data \((v^0,v^1)\in L^2(0,1)\times \had\). If  \(v_x(\cdot,1)\in L^2(0,T)\), then \((v^0,v^1)\in \ha\times L^2(0,1)\).
\end{prop}

In order to do that, as in \cite{gueye2014exact,HSSuTe2025}, we will describe the solutions of \eqref{pbC1} using Fourier-Bessel series. This is achieved by defining the unbounded operator 
\(\A:D(\A)\subset L^2(0,1)\longrightarrow L^2(0,1)\) by
\begin{equation}\label{operA}
\begin{cases}
D(\A):=\ha\cap \haa\\
\A u=(x^\alpha u_x)_x,\ u\in D(\A),
\end{cases}
\end{equation}
which is self-adjoint, positive definite, and have a compact resolvent. By the Spectral Theorem,  there exists a Hilbertian basis \( (\phin)_{n\in \mathbb{N}^\ast}\) of \(L^2(0,1)\) and an increasing sequence \( (\lambda_n)_{n\in \mathbb{N}^\ast} \), of real numbers such that \(\lambdan>0\), \(\lambdan\to +\infty\) and
\[
\A\phin=\lambdan \phin, \ \forall n\in \mathbb{N}^\ast.
\]

As a consequence, the initial data can be represented in the form
\begin{equation}\label{init-data}
v^0=\sum_{n\in \mathbb{N}^\ast} v_n^0\,\phin, \ v^1=\sum_{n\in \mathbb{N}^\ast} v_n^1\,\phin.,
\end{equation}
where the second representation also relies on \eqref{dual-prod}. Therefore, the solution of \eqref{pbC1} is written as 
\begin{equation}\label{series-v}
v(t,x)=\sum_{n\in \mathbb{N}^\ast} v_n(t)\phin(x),
\end{equation}
where 
\begin{equation}
v_n(t)=b_ne^{i\omega_nt}+\overline{{b}_n}e^{-i\omega_nt},
\end{equation}
with
\(\omega_n:=\sqrt{\lambda_n}\) and the complex coefficients \(b_n\) given by
\begin{equation*}
b_n=\frac{1}{2}\left(v_n^0-i\frac{v_n^1}{\omega_n}\right).
\end{equation*}

Additionally, since \( (\phin)_{n\in \mathbb{N}^\ast}\) are the solutions of the following Sturm-Liouville problem
\begin{equation*}
\begin{cases}
-(x^\alpha \phin')'(x)=\lambdan \phin(x), & x\in (0,1),\\
\phin(1)=0, & \\
\begin{cases}
\phin(0)=0, & \text{ if }\alpha\in [0,1)\\
\lim\limits_{x\to 0^+}(x^\alpha \phin')=0 &   \text{ if }\alpha\in [1,2),
\end{cases}
\end{cases}
\end{equation*} 
it is given, in \cite[Proposition 3.3]{HSSuTe2025}, an explicit expressions of the spectrum. Specifically, 
\begin{equation*}
\lambdan= \kappa^2 \jn^2, \ n\in\mathbb{N}^\ast,
\end{equation*}
where 
\begin{equation*}
\kappa=\frac{2-\alpha}{2}, \ \nu=\frac{|1-\alpha|}{2-\alpha} 
\end{equation*}
and \((\jn)_{n\in \mathbb{N}^\ast}\) is the strictly increasing sequence of positive zeros of the Bessel function
\begin{equation*}
J_\nu(x)=\sum_{n\in \mathbb{N}^\ast}\frac{(-1)^n}{n!\Gamma(n+\nu+1)}\left(\frac{x}{2}\right)^{2n+\nu}, \ x\geq 0.
\end{equation*} 
And the corresponding normalized eigenfunctions are
\begin{equation*}
\phin(x)=\frac{\sqrt{2\kappa}}{\left|J_\nu^\prime(\jn)\right|}
 \sqrt{x^{1-\alpha}}J_\nu (\jn x^\kappa), \ x\in(0,1),\ n\in\mathbb{N}^\ast.
\end{equation*}

Before giving the proof of Proposition \ref{reg-ND}, we shall state two well-known results. The first result concerns about the location of the zeros of the Bessel function, whose proof can be found in \cite[Proposition 7.8]{komornik2005fourier}.

\begin{lem}\label{besselzeros}
Let \((\jn)_{n\in \mathbb{N}^\ast}\) be the positive zeros of the Bessel function \(J_\nu\). Then, the following statements hold:

\begin{enumerate}[(a)]
\item For any real number \(\nu\) given,  the zeros of the Bessel function \(J_\nu\) are simple and they form an infinite increasing sequence, \(j_{\nu,1}<j_{\nu,2}<\ldots\), tending to infinity.

\item The difference sequence   \( (j_{\nu,n+1}-\jn)_{n\in \mathbb{N}^\ast} \) converges to \(\pi\) as \(n\to +\infty\).

\item The sequence  \( (j_{\nu,n+1}-\jn)_{n\in \mathbb{N}^\ast} \) is strictly decreasing, if \(|\nu|>\frac{1}{2}\), strictly increasing, if \(|\nu|<\frac{1}{2}\), and constant, if \(|\nu|=\frac{1}{2}\).
\end{enumerate}
\end{lem}

In our case,  since 
\[
\nu=\frac{|1-\alpha|}{2-\alpha},
\]
we have that \(\nu\in \left[0,\frac{1}{2}\right]\), when \( 0\leq \alpha \leq \frac{4}{3}\) and \(\nu>\frac{1}{2}\) when \(\frac{4}{3}\leq \alpha<2\).
\begin{figure}[h]
\includegraphics[scale=0.7]{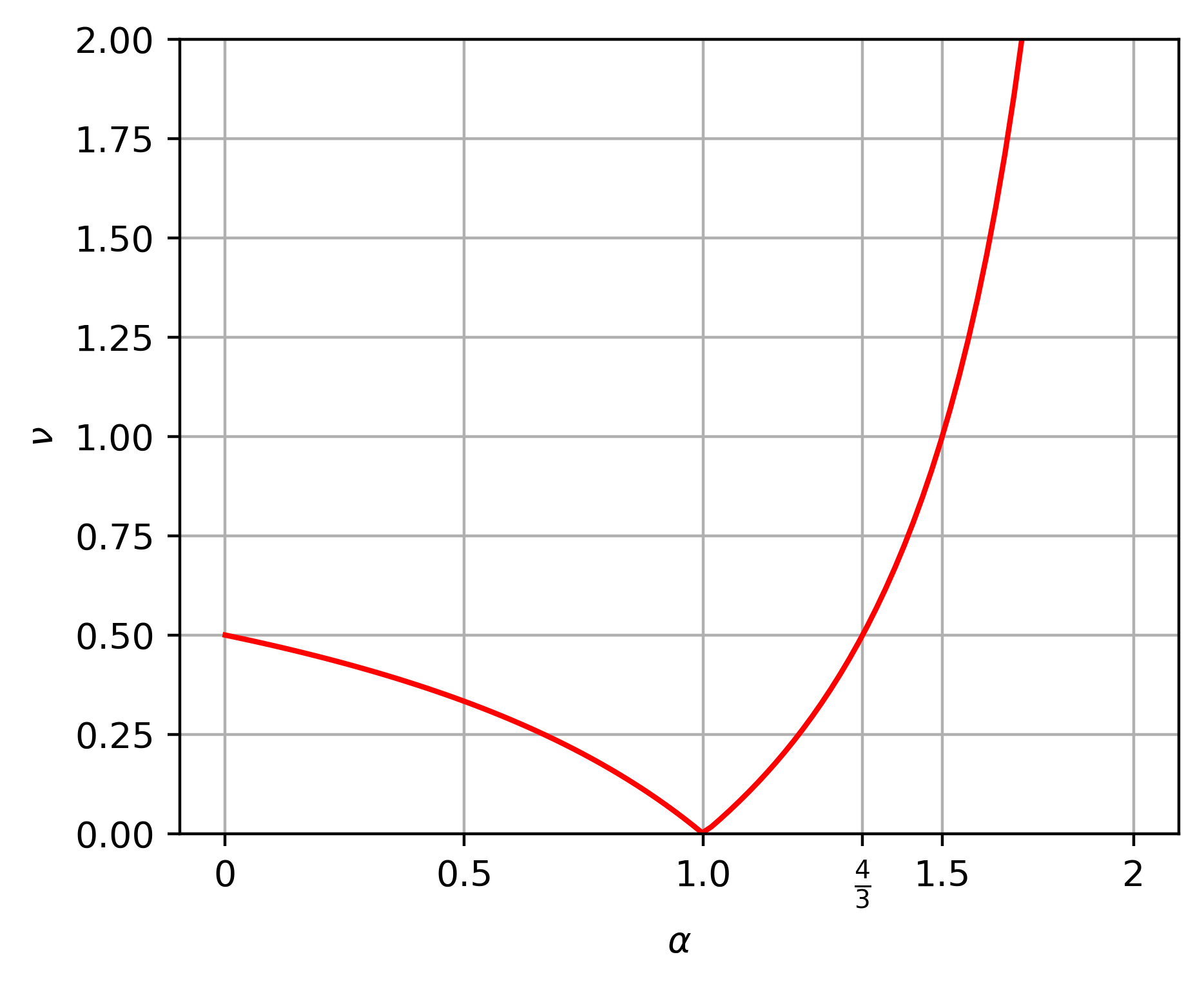}
\caption{\(\nu=\frac{|1-\alpha|}{2-\alpha}\), for \(\alpha\in (0,2)\)}
\end{figure}

Consequently, from item  (c) of Lemma \ref{besselzeros}, the following estimate holds:
\begin{equation}\label{est.jn}
(\jnn{n+1}-\jn) \geq \inf\limits_{k\in\mathbb{N}}(\jnn{k+1}-\jnn{k})
 =\begin{cases}
\jnn{2}-\jnn{1}, & 0\leq \alpha \leq \frac{4}{3},\\
\pi, & \frac{4}{3}<\alpha < 2.
\end{cases},\ \forall n\in \mathbb{N}.
\end{equation}

The second result is about Ingham inequality. Its proof is found in \cite[Theorem 4.3]{komornik2005fourier}.

\begin{lem}\label{Ingham}
Let \((\omega_k)_{k\in \mathbb{N}^\ast}\) be a sequence of real numbers that satisfies the gap condition
\begin{equation*}
\omega_{k+1}-\omega_k\geq \gamma, \ \forall n\in \mathbb{Z}.
\end{equation*}
Then, for any \(T>2\pi/\gamma\), there exists a positive constant \(C=C(T,\gamma)\) such that 
\begin{equation*}
\sum_{k\in \mathbb{Z}}|a_k|^2 \leq C \int_0^T \left|\sum_{k\in \mathbb{Z}}a_ke^{i\omega_kt}\right|^2\,dt,
\end{equation*}
for all sequence  \((a_k)_{k\in \mathbb{Z}}\in \ell^2(\mathbb{Z})\) of complex numbers.
\end{lem}

Now, we are read to prove our main result.

\begin{proof}[Proof of Proposition \ref{reg-ND} ]
Firstly, from the representation given in  \eqref{init-data}, we can see that
\begin{itemize}
\item \(
\displaystyle v^0\in \ha  \Leftrightarrow \sum_{n\in \mathbb{N}^\ast}\lambdan|v_n^0|^2<\infty
\Leftrightarrow (\sqrt{\lambdan}v_n^0 )_{n\in \mathbb{N} ^{\ast}} \in \ell^2
\)

\item 
\(
\displaystyle v^1\in L^2(0,1) \Leftrightarrow  \sum_{n\in \mathbb{N}^\ast}|v_n^1|^2<\infty
\Leftrightarrow (v_n^1 )_{n\in \mathbb{N} ^{\ast}}\in \ell^2.
\)
\end{itemize}
That is, we need to prove the convergence of that series to get the result. To do that, we will proceed as in \cite{gueye2014exact} and use Ingham Inequality.

From \eqref{series-v}, we can see that
\begin{equation*}
v_x(t,1)=\sum_{n\in \mathbb{N}^\ast}\left(b_n e^{i\omega t}+\overline{b_n}e^{-i\omega t}\right)
\phin'(1).
\end{equation*}
We can rewrite the indexes, in order to apply Ingham inequality, in the following way:
\begin{equation*}
v_x(t,1)=\sum_{k\in \mathbb{Z}}a_ke^{i{\tilde{\omega}}_kt},
\end{equation*}
where
\begin{equation*}
a_k=\begin{cases}
b_k\phin'(1), & k\geq 1\\
0, & k=0\\
\overline{b_{-k}}\phi_{-k}'(1), & k\leq -1
\end{cases} \ \text{ and } \
\tilde{\omega}_k=\begin{cases}
\omega_k, & k\geq 1\\
0, & k=0\\
-\omega_{-k}, & k\leq -1.
\end{cases} \ 
\end{equation*}

Hence, we just need to check the gap condition of Lemma \ref{Ingham}, for \((\tilde{\omega}_k)_{k\in \mathbb{Z}}\). Indeed, from \eqref{est.jn} we can see that
\begin{align}\label{gamma}
\omega_{n+1}-\omega_n\geq \gamma:= \kappa \inf\limits_{k\in\mathbb{N}}(\jnn{k+1}-\jnn{k})
=\begin{cases}
\kappa(\jnn{2}-\jnn{1}), & 0\leq \alpha \leq \frac{4}{3},\\
\kappa\pi, & \frac{4}{3}<\alpha < 2.
\end{cases},\ \forall n\in \mathbb{N}^\ast,
\end{align}
which gives the required gap condition for  \((\tilde{\omega}_k)_{k\in \mathbb{Z}}\). Therefore, Ingham inequility assures that, for any \(T> 2\pi/\gamma \),
\begin{equation*}
\sum_{n\in \mathbb{N}^\ast} |b_n\phin'(1)|^2\leq \sum_{k\in \mathbb{Z}} |a_k|^2\leq C \int_0^T|v_x(t,1)|^2\,dt<+\infty.
\end{equation*}
Now, let us to compute the left hand side of this inequality.
Since
\begin{equation*}
\phin'(1)=\sqrt{2}\jn\kappa^{3/2}\frac{J_\nu^\prime(\jn)}{\left|J_\nu^\prime(\jn)\right|},
\end{equation*}
we can see that
\begin{equation*}
|b_n\phin'(1)|^2=\frac{\kappa}{2}\left(\jn^2\kappa^2|v_n^0|^2+|v_n^1|^2\right)
=\frac{\kappa}{2}\left(\lambda_n|v_n^0|^2+|v_n^1|^2\right).
\end{equation*}
Therefore, $(\sqrt{\lambdan}v_n^0 )_{n\in \mathbb N ^{\ast}}$ and $(v_n^1\in )_{n\in \mathbb N ^{\ast}}$ belong to \(\ell^2\), as required.
\end{proof}

\begin{rem}
Ingham inequality defines \( \widetilde{T}_\alpha:=2\pi/\gamma\).  Hence, from \eqref{gamma}, we can see that
\begin{equation}\label{Taa}
 \widetilde{T}_\alpha
=\begin{cases}
\frac{4\pi}{(2-\alpha)(\jnn{2}-\jnn{1})}, & 0\leq \alpha \leq \frac{4}{3},\\
\frac{4}{2-\alpha}, & \frac{4}{3}<\alpha < 2.
\end{cases}
\end{equation}
We note that, for  \( \frac{4}{3}<\alpha<2 \), we have \(\widetilde{T}_\alpha=T_\alpha\), where $T_\alpha$ is defined in \eqref{Ta}. On the other hand, for  \(0\leq \alpha\leq \frac{4}{3}\), since \((\jnn{n+1}-\jn)\) is nondecreasing and converges to \(\pi\), we have
\[
\jnn{2}-\jnn{1} 
\leq\sup \limits_{n\in\mathbb{N}}(\jnn{n+1}-\jn)=\pi.
\]
Therefore, 
\[
 \widetilde{T}_\alpha=\left(\frac{\pi}{\jnn{2}-\jnn{1}}\right)T_\alpha\geq   T_\alpha, \text{ for any } \alpha\in \left[0,\frac{4}{3}\right].
\]
Furthermore, we can estimate how close  \( \widetilde{T}_\alpha\) is to \(T_\alpha\), when \(\alpha\in \left[0,\frac{4}{3}\right]\). In fact, since  \(0\leq \nu\leq \frac{1}{2}\), we know that \(\jnn{2}-\jnn1\) is increasing with respect to \(\nu\)  ( see \cite[Corollary 1]{lorch1964monotonicity}). Hence, because \(j_{\frac{1}{2},2}-j_{\frac{1}{2},1}=\pi\) and 
\(j_{0,2}-j_{0,1}\approx 3.11525255\), we can estimate the ratio between them as follows:
\[
1=\left(\frac{\pi}{j_{\frac{1}{2},2}-j_{\frac{1}{2},1}}\right)
\leq\frac{\widetilde{T}_\alpha}{T_\alpha} \leq \left(\frac{\pi}{j_{0,2}-j_{0,1}}\right)\approx 1.00845521.
\]
Finally, in order to visualize it better, we plot the graph of this ratio below.

\begin{figure}[h!]
\includegraphics[scale=0.6]{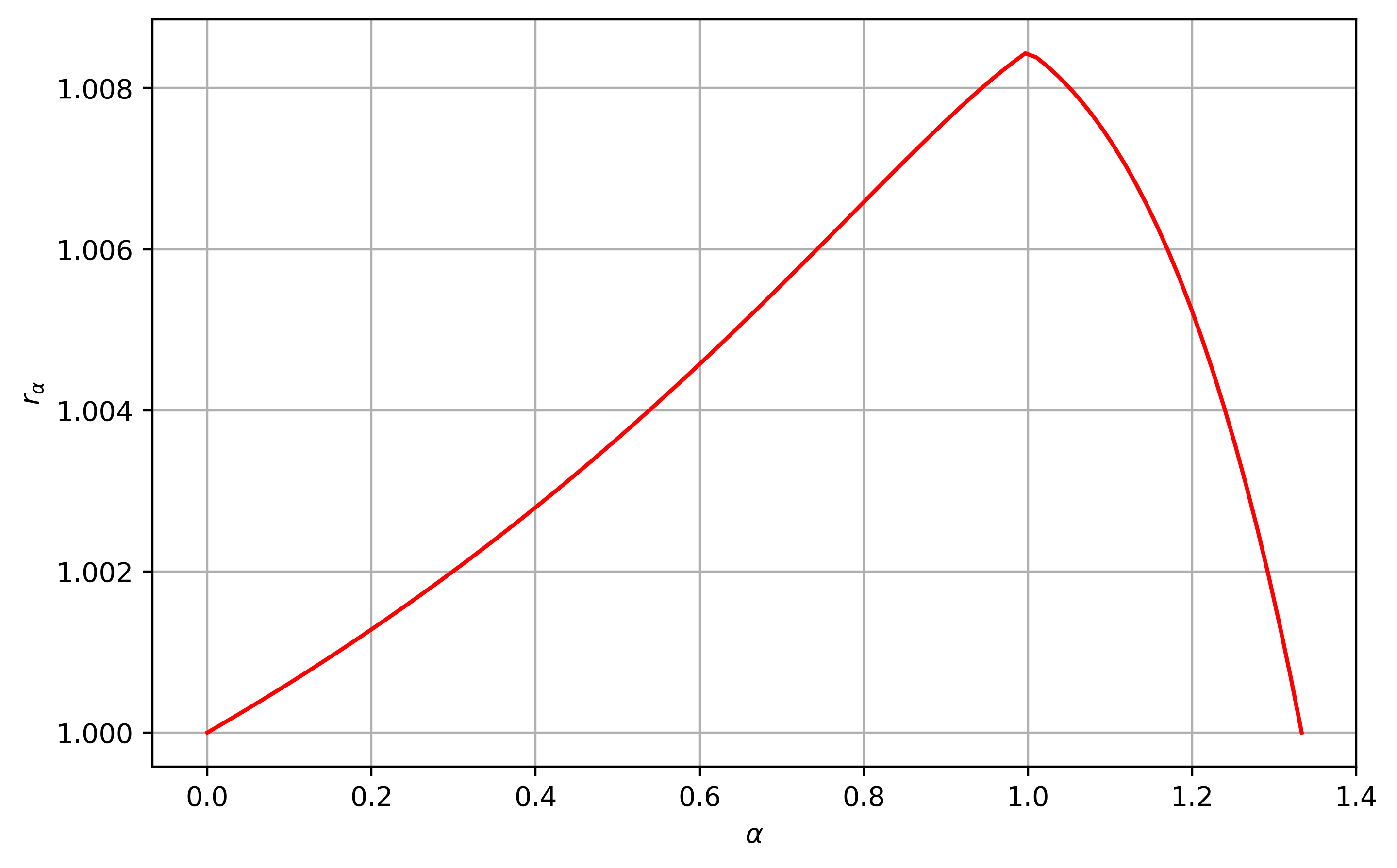}
\caption{ratio \(r_\alpha=\frac{\widetilde{T}_\alpha}{T_\alpha}\), when \(0\leq \alpha\leq \frac{4}{3}\).}
\end{figure}
\end{rem}


\newpage
\bibliographystyle{acm}
\bibliography{references}
\end{document}